\documentclass[11pt,a4paper,twoside]{amsart}
\usepackage{amssymb,amsmath,amsthm,graphicx,color}
% \usepackage[pdftex,%colorlinks,colorlinks=true,
% citecolor=black,menucolor=black,linkcolor=black]{hyperref}
\theoremstyle{plain}
\newtheorem{theorem}{Theorem}[section]
\newtheorem{proposition}[theorem]{Proposition}
\newtheorem{definition}[theorem]{Definition}
\newtheorem{lemma}[theorem]{Lemma}
\newtheorem{corollary}[theorem]{Corollary}
\newtheorem{assumption}[theorem]{Assumption}
\theoremstyle{remark}
\newtheorem{remark}[theorem]{Remark}

\usepackage%{hyperref}
[bookmarks=true,
   bookmarksnumbered=false,
   bookmarkstype=toc]
{hyperref}
\allowdisplaybreaks[4]
\numberwithin{equation}{section}

%%%%%%%original%%%%%%%%%

\newcommand{\R}{\mathbb{R}}
\newcommand{\Z}{\mathbb{Z}}

\newcommand{\F}{\mathcal{F}}

\newcommand{\I}{\infty}
\newcommand{\abs}[1]{\left\lvert #1\right\rvert}

\newcommand{\norm}[1]{\left\lVert #1\right\rVert}

\newcommand{\hMn}[4]{\left\lVert #1 \right\rVert_{\hat{M}^{#2}_{{#3},{#4}}}}

\newcommand{\tnorm}[1]{\lVert #1\rVert}

\newcommand{\IN}{\quad\text{in }}

\newcommand{\EQ}[1]{\begin{equation} #1 \end{equation}}
\newcommand{\ALN}[1]{\begin{align*} #1 \end{align*}}

\newcommand{\ALNd}[1]{\begin{aligned} #1 \end{aligned}}

\def\Sch{{\mathcal S}} 
\def\({\left(}
\def\){\right)}
\def\<{\left\langle}
\def\>{\right\rangle}
\def\le{\leqslant}
\def\ge{\geqslant}

\def\d{{\partial}}

\newcommand{\si}{\sigma}

\newcommand{\eps}{\varepsilon}

\DeclareMathOperator{\supp}{supp}

\DeclareMathOperator{\dist}{dist}
%%%%%%%%%%%%%%%%%%%%%

%修正用コマンド: 

\newcommand{\rre}{\mathbb{R}}
\newcommand{\pt}{\partial}

\begin{document}
\title[Refinement of Strichartz estimate for Airy equation]
{Refinement of Strichartz estimate for\\
Airy equation in non-diagonal case\\ 
and its application}
\author[S.Masaki and J.Segata]
{Satoshi Masaki  
and Jun-ichi Segata}
%\author[S.Masaki and J.Segata]
%{Satoshi Masaki${}^{\dagger}$ 
%and Jun-ichi Segata${}^{\ddagger}$}
\address{Department systems innovation\\
Graduate school of Engineering Science\\
Osaka University\\
Toyonaka Osaka, 560-8531, Japan}
\email{masaki@sigmath.es.osaka-u.ac.jp}

\address{Mathematical Institute, Tohoku University\\
6-3, Aoba, Aramaki, Aoba-ku, Sendai 980-8578, Japan}
\email{segata@m.tohoku.ac.jp}

\subjclass[2000]{Primary 35Q53%54C35
; Secondary 35B40}

\keywords{generalized Korteweg-de Vries equation, 
%generalized Morrey space, 
scattering problem}

\maketitle
\vskip-15mm
\begin{abstract}
In this paper, we give an improvement of the non-diagonal Strichartz estimate 
for Airy equation by using a Morrey type space. 
As its applications, we prove the small data scattering and existence 
of a special non-scattering solutions, which are minimal in suitable sense, 
to the mass-subcritical generalized Korteweg-de Vries (gKdV) equation. 
Especially, a use of the refined non-diagonal estimate removes several technical restrictions on
the previous work \cite{MS2} about the existence of the special 
non-scattering solution.  
%, which is minimal in some sense, to the mass-subcritical
%generalized Korteweg-de Vries (gKdV) equation.
% We extend  in the scale critical $\hat{L}^{\alpha}$ space 
% to generalized hat Morrey space $\hat{M}^{\alpha}_{2,\sigma}$. 
\end{abstract}

%\tableofcontents

\section{Introduction}

%%%%%%%%%%%%%%%%%%%%%%%%%%%%%%%%%%%%%%%%%%%%%%%%%%%%
%
%  Section 1 Introduction
%
%%%%%%%%%%%%%%%%%%%%%%%%%%%%%%%%%%%%%%%%%%%%%%%%%%%%

In this paper,
we consider space-time estimates for a solution of 
% $e^{-t\d_x^3}f$ to 
the Airy equation
\begin{equation}\label{A}
\left\{
\begin{aligned}
&\pt_t u+\pt_x^3 u=0,\quad
&& t,x\in\rre,\\
&u(0,x)=f(x),
&&  x\in\rre,
\end{aligned}
\right.
\end{equation}
where $f:\rre\to\rre$ is a given data. 
After the pioneering work by Strichartz \cite{St}, 
the space-time estimate for the dispersive 
equation has been studied by many authors in several directions
(see for instance \cite{Caz,KPV1} for the historical background of this topic).  
As for the Schr\"odinger equation, the Strichartz estimate 
for (\ref{A}) is well-known (see \cite{KPV1} for instance). 
Gr\"unrock \cite{G1} and the authors \cite{MS1} extended  
the Strichartz estimate 
for (\ref{A}) to the hat-Lebesgue space, more precisely 
we obtained the following estimate.

\begin{theorem}[generalized Strichartz' estimate \cite{G1,MS1}]
Let $(p,q)$ be a pair satisfying either $(p,q)=(\I,2),(4.\I)$ or
\[
	0\le\frac{1}{p}<\frac14,\quad\quad 0 \le \frac1q < \frac12 - \frac1p.
\]
Then, there exists a positive constant $C$ depends 
only on $\alpha$ and $s$ such that the inequality
\begin{equation}\label{eq:mixed}
	\tnorm{|\pt_x|^s e^{-t\d_x^3} f}_{L^p_x(\rre;L^q_t(\rre))}
	+
	\tnorm{|\pt_x|^{\frac1p} e^{-t\d_x^3} f}_{L^p_t(\rre;L^q_x(\rre))} 
	\le C\norm{f}_{\hat{L}^{\alpha}} 
\end{equation}
holds for any $f\in\hat{L}^{\alpha}$, where $\alpha$ and $s $ are given by 
\[
	\frac1{\alpha} = \frac2p + \frac1q,\quad s=-\frac1p+\frac2q
\]
and the space $\hat{L}^{\alpha}$ is defined for $1\le {\alpha} \le \infty$ by 
\begin{eqnarray*}
	\hat{L}^{\alpha}=\hat{L}^{\alpha}(\rre):=\{f\in{{\mathcal S}}'(\rre)|
	\norm{f}_{\hat{L}^{\alpha}} =\|\hat{f}\|_{L^{\alpha'}}<\infty\},
\end{eqnarray*}
and $\hat{f}$ stands for Fourier transform of $f$ in $x$,  
and $\alpha'$ denotes the H\"older conjugate of $\alpha$.  
\end{theorem}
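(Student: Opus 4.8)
The plan is to prove both inequalities by reducing to a half-line frequency support, performing the change of variables $\eta=\xi^{3}$ in the Airy multiplier, and then combining the Hausdorff--Young and Hardy--Littlewood--Sobolev inequalities with a $TT^{*}$ argument, after which complex interpolation among the resulting estimates sweeps out the full admissible range. The two terms on the left-hand side are treated by the same scheme with the roles of $x$ and $t$ interchanged, so I concentrate on the first one, $\||\partial_x|^{s}e^{-t\partial_x^{3}}f\|_{L^{p}_{x}(L^{q}_{t})}$.

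Write $f=f_{+}+f_{-}$ with $\widehat{f_{\pm}}$ supported in $\{\pm\xi>0\}$; by the reflection symmetry $\xi\mapsto-\xi$ of the Airy flow it is enough to estimate $f_{+}$. For such $f$ one has, up to constants, $|\partial_x|^{s}e^{-t\partial_x^{3}}f(x)=\int_{0}^{\infty}e^{ix\xi+it\xi^{3}}\xi^{s}\widehat f(\xi)\,d\xi$, and the substitution $\eta=\xi^{3}$ converts $e^{it\xi^{3}}$ into the honest one-dimensional Fourier kernel $e^{it\eta}$, at the cost of the Jacobian weight $\eta^{-2/3}$ and the modulated profile $g_{x}(\eta):=e^{ix\eta^{1/3}}\eta^{(s-2)/3}\widehat f(\eta^{1/3})$. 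The exponents $\alpha$ and $s$ are precisely those for which the target estimate is invariant under the scaling $f\mapsto f(\lambda\,\cdot)$ of the Airy equation, so scaling consistency is built in from the outset.

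I would first dispose of the endpoint-type cases directly. When $p=\infty$, taking the $L^{q}_{t}$ norm and applying Hausdorff--Young in $t$ (legitimate since $q\ge2$) bounds the left-hand side by $\|g_{x}\|_{L^{q'}_{\eta}}$; since $|g_{x}(\eta)|$ is independent of $x$, undoing the substitution identifies this quantity with $\|\widehat f\|_{L^{q'}}=\|f\|_{\hat{L}^{q}}$, which is the assertion with $\alpha=q$. Dually, at each fixed time $\|e^{-t\partial_x^{3}}f\|_{L^{q}_{x}}\le\|e^{it\xi^{3}}\widehat f\|_{L^{q'}_{\xi}}=\|f\|_{\hat{L}^{q}}$ by Hausdorff--Young in $x$. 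These, together with the classical $L^{2}$-based Strichartz and local smoothing estimates for the Airy equation (the case $\alpha=2$, including the special pairs $(p,q)=(\infty,2)$ and $(4,\infty)$, which come from $TT^{*}$ and Hardy--Littlewood--Sobolev), serve as the anchors for the interpolation.

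For the genuinely non-diagonal cases I would run $TT^{*}$ directly. The operator $TT^{*}$ attached to $Tf=|\partial_x|^{s}e^{-t\partial_x^{3}}f$ acts by convolution against a kernel of the form $|t-t'|^{-(2s+1)/3}\,\Phi\!\left((x-x')|t-t'|^{-1/3}\right)$, where $\Phi(y)=\int|\zeta|^{2s}e^{iy\zeta+i\zeta^{3}}\,d\zeta$ is a generalized Airy function which is bounded with adequate polynomial decay as long as $s$ lies in the stated range; estimating this kernel by Minkowski's and Young's inequalities and Hardy--Littlewood--Sobolev in the $x$-variable, and by Hausdorff--Young and Hardy--Littlewood--Sobolev in the $t$-variable, gives $TT^{*}\colon L^{p'}_{x}(L^{q'}_{t})\to L^{p}_{x}(L^{q}_{t})$, hence the claim; complex interpolation of the $\hat{L}^{\alpha}$ scale and of the mixed-norm spaces then covers the whole open region. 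I expect the main obstacle to be the control of the $L^{p}_{x}$ norm when $p<\infty$: Hausdorff--Young by itself only yields an $L^{\infty}_{x}$ bound, so one must genuinely exploit the dispersive decay of the Airy kernel, and at the same time handle the possibly negative order $s$, which makes $\Phi$ singular near $\zeta=0$ and forces one to separate $\widehat f$ into low and high frequencies and reassemble the pieces via Littlewood--Paley theory and scaling.
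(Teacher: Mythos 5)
Your reduction to positive frequencies, the change of variables $\eta=\xi^3$, and the Hausdorff--Young derivation of the $p=\infty$ family (and of the fixed-time bound) are correct; the problem is the central step ``run $TT^*$ directly'' for the non-diagonal cases. Boundedness of $TT^*$ from $L^{p'}_x(L^{q'}_t)$ to $L^{p}_x(L^{q}_t)$ is, by the usual $TT^*$ lemma, \emph{equivalent} to $T\colon L^2\to L^{p}_x(L^{q}_t)$, and by scaling (with the weight $|\partial_x|^s$, $s=-1/p+2/q$) an $L^2$-based bound forces $2/p+1/q=1/2$, i.e.\ exactly $\alpha=2$; there is no $TT^*$-type equivalence with $\hat{L}^{\alpha}$, $\alpha\neq2$, in place of $L^2$, so ``hence the claim'' does not follow for general $\alpha$. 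As a consequence every anchor your scheme actually produces has $\alpha\ge2$: the $p=\infty$ estimates have $\alpha=q\ge2$, the Kato smoothing and Kenig--Ruiz endpoints have $\alpha=2$, and since $1/\alpha=2/p+1/q$ is affine under (complex or Benedek--Panzone/Stein) interpolation, no interpolation of these can reach the part of the admissible region lying strictly above the segment joining $(1/p,1/q)=(0,1/2)$ and $(1/4,0)$, where $\alpha<2$ --- for instance the diagonal range $4<p=q<6$. This is not a marginal omission: the $\alpha<2$ (mass-subcritical) range is precisely the one the present paper needs.

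The statement is quoted from \cite{G1,MS1}, and the route indicated in the paper differs at exactly this point: one interpolates three ingredients --- the Kato smoothing endpoint $(p,q)=(\infty,2)$, the Kenig--Ruiz endpoint $(4,\infty)$, and the diagonal Stein--Tomas estimate in $\hat{L}^{\alpha}$ for $p=q\in(4,\infty]$. The diagonal estimate, which carries all of the $\alpha<2$ information, is proved not by $TT^*$ but by a bilinear Fefferman--Stein/Zygmund-type argument: write $\||\partial_x|^s e^{-t\partial_x^3}f\|_{L^{p}_{t,x}}^2=\|(|\partial_x|^s e^{-t\partial_x^3}f)^2\|_{L^{p/2}_{t,x}}$, change variables $(\xi_1,\xi_2)\mapsto(\xi_1+\xi_2,\xi_1^3+\xi_2^3)$ (Jacobian comparable to $|\xi_1^2-\xi_2^2|$), and apply Hausdorff--Young in both variables, which is legitimate since $p/2\ge2$; this yields the $\hat{L}^{\alpha}$ bound with $\alpha=p/3$, including $\alpha<2$. (This bilinear structure is also the germ of the refined estimates proved in Section 2 of the paper.) If you replace your $TT^*$ step by this diagonal bilinear estimate and keep your interpolation scheme for the mixed norms, you recover the theorem; as written, the proposal leaves the whole $\alpha<2$ portion of the asserted range unproven.
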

The generalized Strichartz' estimate (\ref{eq:mixed}) is shown by interpolating 
the endpoint cases $(p,q)=(\I,2)$, $(4.\I)$, which corresponds 
to the well-known Kato's smoothing and Kenig-Ruiz estimates, 
and the diagonal case $p=q \in (4,\I]$. We refer the estimate 
in the diagonal case to as a Stein-Tomas estimate.

The aim of this paper is to obtain a refinement of 
the Strichartz/Stein-Tomas estimates for (\ref{A})
for data in a (generalized) hat-Morrey space, which 
is wider than the above hat-Lebesgue spaces (see Appendix A).
Let us first give its definition.

\begin{definition}[A Morrey and a hat-Morrey spaces]\label{Morrey}
For $j,k \in \Z$, let $\tau^{j}_{k}=[k2^{-j},(k+1)2^{-j})$ be a dyadic interval. 

\vskip1mm
\noindent
(i) For $1 \le \gamma \le  \beta \le \I$ and $\beta < \delta\le\infty$, we define a 
Morrey norm %$\norm{\cdot}_{M^{\beta}_{\gamma,r}}$
 by
\[
	\norm{f}_{M^\beta_{\gamma,\delta}} =
	\norm{|\tau^{j}_{k}|^{\frac1\beta-\frac1\gamma} \norm{f}_{L^{\gamma}(\tau^{j}_{k})} }_{\ell^{\delta}_{j,k}},
\]
where, the case $\beta=\gamma$ and $\gamma<\I$ is excluded.

\vskip1mm
\noindent
(ii) 
For $1\le \beta \le \gamma \le \I$ and $\beta' <  \delta\le \I$,
we define a hat-Morrey norm by
\[
	\norm{f}_{\hat{M}^\beta_{\gamma,\delta}} := \| \hat{f}\|_{M^{\beta'}_{\gamma',\delta}}=
	\norm{|\tau^{j}_{k}|^{\frac1\gamma-\frac1\beta} \tnorm{\hat{f}}_{L^{\gamma'}(\tau^{j}_{k})} }_{\ell^{\delta}_{j,k}} .
\]
Banach spaces $M^{\beta}_{\gamma,\delta}$ and 
$\hat{M}^{\beta}_{\gamma,\delta}$ are defined as 
sets of tempered distributions of which above norms are finite, respectively.
\end{definition}

One of the main motivation of this kind of improvement of the Strichartz 
estimates lies in its applications to nonlinear theory.
Especially, we are interested in construction of 
a special non-scattering solutions, which are minimal in a suitable sense,
%\footnote{See Subsection 2.1 for the precise definition of minimal 
%non-scattering solution.}, 
to the mass-subcritical generalized Korteweg-de Vries (gKdV) equation:
\begin{equation}\tag{gKdV}\label{gKdV}
\left\{
\begin{aligned}
&\pt_t u+\pt_x^3 u=\mu\pt_x(|u|^{2\alpha}u),\qquad
&& t,x\in\rre,\\
&u(t_0,x)=u_{0}(x), %\in\hat{M}^\alpha_{2,\sigma},
&&  x\in\rre, 
\end{aligned}
\right.
\end{equation}
where $t_0 \in \R$, $u:\rre\times\rre\to\rre$ is an unknown function, 
$u_{0}:\rre\to\rre$ is a given data, and $0<\alpha<2$, $\mu\in\rre\backslash\{0\}$ 
are constants. 
%Equation (\ref{gKdV}) is a generalization of the notable KdV equation \cite{KV}. 
We construct the minimal solution for (\ref{gKdV}) 
by using the concentration compactness argument by Kenig-Merle \cite{KM}. 
As explained in \cite[Section 1]{MS2}, a good well-posedness theory and
a decoupling (in)equality play a central role in the concentration compactness argument.
However, when $\alpha<2$, it seems difficult to derive those properties 
in the Sobolev or hat-Lebesgue spaces by several reasons.  
In \cite{MS2}, it turns out that a use of the generalized hat-Morrey space enables us 
to establish well-posedness theory good enough and to obtain
the concentration compactness 
lemma equipped with a decoupling inequality\footnote{Although the decoupling (in)equality is 
not obtained in $\hat{L}^\alpha$ with $\alpha\neq2$, 
it can be established in a wider space $\hat{M}^\alpha_{2,\delta}$ 
thanks to the local $L^2$ structure of $\hat{M}^\alpha_{2,\delta}$.}. 
Our estimate in Theorem \ref{thm:S} removes several technical restrictions made in \cite{MS2}.
See Subsection 1.2 below for more details. 

As far as the authors know, the refinement of the Stein-Tomas estimate in this direction first appeared in \cite{B1}
in a context of Schr\"odinger equation.
Besides its own interests, the refined estimate has been studied rather because of its application.
In \cite{B3}, Bourgain use the refined estimate to show
a concentration phenomenon of blow-up solutions for the two dimensional 
mass-critical nonlinear Schr\"odinger equation. 
After Bourgain, 
the refinement of Strichartz estimates are being used, for instance, in the estimate for the 
maximal function associated to the Schr\"odinger equation 
(Moyua, Vargas and Vega \cite{MVV1,MVV2}),
or in the linear profile decomposition in $L^2$-framework 
for the Schr\"odinger equation (Merle and Vega \cite{MV}, Carles and Keraani \cite{CK}, 
and B\'{e}gout and Vargas \cite{BV}).
As for the Airy equation (\ref{A}), 
Kenig, Ponce and Vega \cite{KPV3} showed the the refined estimate %Stein-Tomas
% inequality %in the framework of the hat-Morrey space %. 
and applied it to a study of a concentration of 
blow-up solution for the mass-critical generalized KdV equation. 
By using the estimate, 
Shao \cite{Shao} proved the linear profile decomposition for Airy equation in $L^2$-framework.  

In all above studies, the refinements  
were restricted to the case $\alpha=2$ and the \emph{diagonal} case $p=q$. 
In \cite{MS2} the authors proved the refined Stein-Tomas estimate for (\ref{A}) 
in the case $\alpha\neq2$
and used it for proving existence of a minimal non-scattering solution for the 
\emph{mass-subcritical} generalized KdV equation in the $\hat{L}^\alpha$-framework.
A similar refinement in the Schr\"odinger case was done by the first author \cite{M3},
including its application to existence of a minimal non-scattering solution for 
the mass-subcritical nonlinear Schr\"odinger equation in $\hat{M}^\beta_{\gamma,\delta}$-framework. 
However, the refinement is still restricted to the diagonal case $p=q$.

Main purpose of this paper is to extend the refinement  
to the \emph{non-diagonal} case $p\neq q$, that is, we show refined Strichartz estimates in our terminology.
Our main theorems are as follows. We first give the 
estimate of the Airy equation in the space $L^{p}_x(\R; L^{q}_t(\R))$.  

\begin{theorem}[Refined Strichartz' estimate I]\label{thm:S} 
Let $\si\in(0,1/4)$. Let $(p,q)$ satisfy
\begin{eqnarray*}
0\le\frac1p\le\frac14-\si,\qquad\frac1q\le\frac12-\frac{1}{p}-\si.
\end{eqnarray*}
Define $\alpha$ and $s$ by 
\[
	\frac2p + \frac1q=\frac1\alpha,\qquad s= -\frac1p+\frac2q.
\]
Further, we define $\beta$, $\gamma$, and $\delta$ by
\begin{eqnarray*}
\frac1\beta=\frac1\alpha+\si, \qquad
\frac1\gamma=
\left\{
\begin{aligned}
&\frac1\beta - \frac1p\quad
&&\text{if}\ \ \frac1q\ge\frac1p+\si,\\
&\frac1\beta - \frac1q+\si
&&\text{if}\ \ \frac1q<\frac1p+\si,
\end{aligned}
\right.
\qquad \frac1\delta = \frac12 - \frac1{\max(p,q)}.
\end{eqnarray*}
Then, there exists a positive constant $C$ 
depending on $p,q,\sigma$ such that 
the inequality 
\begin{eqnarray}
\norm{|\d_x|^s e^{-t\d_x^3} f}_{L^{p}_x(\R; L^{q}_t(\R))}
\le C\||\d_x|^{\si} f\|_{\hat{M}^{\beta}_{\gamma,\delta}}
\label{qwe}
\end{eqnarray}
holds for any $f\in |\d_x|^{-\si}\hat{M}^{\beta}_{\gamma,\delta}$.
\end{theorem}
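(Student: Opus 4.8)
The plan is to obtain \eqref{qwe} by interpolating a refined version of the generalized Strichartz estimate \eqref{eq:mixed} against a trivial bound, exploiting a dyadic (Whitney-type) decomposition of frequency space to gain the local $L^2$-structure that upgrades the hat-Lebesgue space on the right-hand side to the wider hat-Morrey space $\hat{M}^\beta_{\gamma,\delta}$. First I would reduce, by scaling and by the definition of $|\d_x|^\sigma$, to proving the estimate with $\sigma=0$ formally absorbed, i.e.\ it suffices to bound $\norm{|\d_x|^{s} e^{-t\d_x^3} g}_{L^p_x L^q_t}$ by $\norm{g}_{\hat{M}^\beta_{\gamma,\delta}}$ where $g=|\d_x|^\sigma f$ and the exponents $(\beta,\gamma,\delta)$ are as defined; here the key relation $1/\beta = 1/\alpha+\sigma$ together with $s=-1/p+2/q$ is exactly what makes the parabolic-rescaling bookkeeping close.

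The core step is a \emph{local} (single-dyadic-block) estimate. Decompose the frequency line into dyadic intervals $\tau^j_k$ and let $P_{j,k}$ be the corresponding Fourier projection. On a fixed block of length $2^{-j}$ centered (after translation) near a point of size $\sim 2^{-j}$, the Airy propagator is, by a parabolic rescaling $\xi\mapsto 2^{-j}\xi + c$, essentially a Schr\"odinger-type propagator on a unit block, and the generalized Strichartz estimate of the first Theorem applied on that unit block gives
\[
  \norm{|\d_x|^s e^{-t\d_x^3} P_{j,k} g}_{L^p_x L^q_t} \lesssim 2^{-j(\frac1\gamma-\frac1\beta)} \tnorm{\widehat{g}}_{L^{\gamma'}(\tau^j_k)},
\]
the power of $2^{-j}$ being forced by the scaling of both sides — this is where the two cases in the definition of $\gamma$ enter, according to whether the $L^p_x L^q_t$ norm on a rescaled block picks up its gain from the $x$-integration (when $1/q\ge 1/p+\sigma$) or from the $t$-integration (when $1/q<1/p+\sigma$). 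The exponent $\delta$ with $1/\delta = 1/2 - 1/\max(p,q)$ is precisely the exponent for which an almost-orthogonality / Littlewood--Paley square-function argument in the remaining variable lets one sum the blocks: one writes $\norm{|\d_x|^s e^{-t\d_x^3} g}_{L^p_x L^q_t}$, uses that the pieces $e^{-t\d_x^3}P_{j,k}g$ have essentially disjoint space-time frequency supports at a given scale, applies the Rubio de Francia / Córdoba square-function inequality valid for $\max(p,q)\ge 2$ to pass to $\ell^\delta$ of the blockwise norms, and then inserts the local estimate above. Summing $\ell^\delta_{j,k}$ of $2^{-j(1/\gamma-1/\beta)}\tnorm{\widehat g}_{L^{\gamma'}(\tau^j_k)}$ is exactly $\norm{g}_{\hat{M}^\beta_{\gamma,\delta}}$ by Definition \ref{Morrey}(ii).

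I expect the main obstacle to be the \emph{almost-orthogonality in the non-diagonal setting}: unlike the diagonal case $p=q$ treated in \cite{MS2,KPV3,Shao}, here the relevant square-function estimate has to be run in the mixed norm $L^p_x L^q_t$, and the space-time Fourier support of $e^{-t\d_x^3}P_{j,k}g$ is a curved strip (a piece of the cubic curve $\tau=\xi^3$) rather than a plane slab, so one cannot directly quote a classical Littlewood--Paley theorem. The fix is to note that after the parabolic rescaling the curve is, to leading order, a parabola with bounded curvature, so the strips at a fixed scale $2^{-j}$ have bounded overlap after projection to either the $\xi$- or the $\tau$-axis; combined with the condition $\max(p,q)\ge 2$ (which is built into $1/\delta = 1/2 - 1/\max(p,q)\ge 0$) and the Hausdorff--Young / Minkowski inequalities in the mixed norm, this yields the needed block-summation with constant depending only on $p,q,\sigma$. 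The remaining verification — that the exponent relations are consistent at the two endpoints of the admissible region for $(p,q)$ and along the boundary curve $1/q = 1/p+\sigma$ where the two definitions of $\gamma$ agree — is a routine computation that I would carry out at the end.
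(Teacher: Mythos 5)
Your plan is not the paper's argument, and as sketched it has gaps that I do not see how to close. The paper proves \eqref{qwe} by a \emph{bilinear} reduction: it writes $\tnorm{|\pt_x|^s e^{-t\pt_x^3}f}_{L^p_xL^q_t}^2$ as the $L^{p/2}_xL^{q/2}_t$ norm of $||\pt_x|^s e^{-t\pt_x^3}f|^2$, performs a Whitney decomposition of the \emph{pair} $(\xi,\eta)$ away from the diagonal, so that each pair of frequencies is assigned to a unique scale $j$, and then controls the resulting sum by an almost-orthogonality property of the space-time Fourier supports (Proposition \ref{orthgnal}), multiplier bounds that are smooth only in the $\tau$-variable (Proposition \ref{boundness}, which is the source of the loss $\sigma>0$), an interpolation giving the vector-valued estimate \eqref{s2} with exactly $1/\delta=1/2-1/\max(p,q)$, and blockwise bilinear estimates combined by Stein's interpolation for mixed norms. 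Your proposal is instead a \emph{linear} block decomposition, and this is where it breaks down. First, the family $\{\tau^j_k\}_{j,k}$ over all scales is not a partition of the frequency line: there is no decomposition $g=\sum_{j,k}P_{j,k}g$, and you never say how the sum over the scale parameter $j$ is organized. The multi-scale structure of the $\hat M^\beta_{\gamma,\delta}$ norm cannot be reached by a single Littlewood--Paley-type decomposition; in all known refinements (Bourgain, B\'egout--Vargas, Shao, \cite{MS2}) it enters precisely through the bilinear Whitney decomposition, which your sketch replaces without a substitute.

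Second, the two analytic ingredients you invoke do not deliver what you need. The single-block estimate ``by parabolic rescaling'' is suspect in mixed norms: recentering the cubic phase at $\xi_0\sim k2^{-j}$ produces a shear $x\mapsto x+3\xi_0^2 t$, which preserves diagonal $L^p_{t,x}$ norms but \emph{not} $L^p_xL^q_t$ with $p\neq q$ --- this is exactly the non-diagonal obstruction the paper works around with Stein interpolation between the estimates \eqref{e12}--\eqref{e22} (resp.\ \eqref{e1}--\eqref{e2}); also the correct gain depends on the distance $k2^{-j}$ of the block from the origin, not only on its length, which your formula does not track. And the summation step asks for an $\ell^\delta$ (with $\delta\ge2$) bound of the mixed space-time norm of a sum of pieces whose space-time Fourier supports are curved strips along $\tau=\xi^3$; this is \emph{stronger} than the $\ell^2$ bound Rubio de Francia/C\'ordoba square functions give, and no classical square-function theorem applies in $L^p_xL^q_t$ to such curved supports. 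The paper obtains the needed inequality by interpolating a Plancherel bound (valid thanks to almost orthogonality of supports enlarged \emph{only in the $\tau$-direction}) against a crude triangle-inequality bound, and it is exactly this device that forces $\sigma>0$. Your sketch produces no such restriction --- it would formally ``prove'' the case $\sigma=0$ as well, which the authors explicitly cannot reach (Remark \ref{rem:si}) --- a clear sign that the orthogonality/summation step is not actually justified as stated.
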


\begin{remark} For the diagonal case $p=q$, the inequality (\ref{qwe}) 
holds for $\si=0$, see \cite[Theorem B.1]{MS2}. 
\end{remark}

Next we give the estimate for the Airy equation in 
the space $L^{p}_t(\R; L^{q}_x(\R))$.

\begin{theorem}[Refined Strichartz' estimate II]\label{thm:T} 
Let $\si\in(0,1/4)$. Let $(p,q)$ satisfy
\begin{eqnarray*}
0\le\frac1p\le\frac14,\qquad\frac1q\le\frac12-\frac{1}{p}-\si.
\end{eqnarray*}
Define $\alpha$ by 
\[
	\frac2p + \frac1q=\frac1\alpha.
\]
Further, we define $\gamma$ and $\delta$ by
\begin{eqnarray*}
\frac1\gamma
=
\left\{
\begin{aligned}
&\frac1\alpha - \frac1p+\sigma\quad
&&\text{if}\ \ \frac1q\ge\frac1p-\si,\\
&\frac1\alpha - \frac1q
&&\text{if}\ \ \frac1q<\frac1p-\si,
\end{aligned}
\right.
\qquad \frac1\delta = \frac12 - \frac1{\max(p,q)}.
\end{eqnarray*}
Then, there exists a positive constant $C$ 
depending on $p,q$ such that 
the inequality 
\begin{eqnarray}
\norm{|\d_x|^{\frac1p} e^{-t\d_x^3} f}_{L^{p}_t(\R; L^{q}_x(\R))}
\le C\|f\|_{\hat{M}^{\alpha}_{\gamma,\delta}}
\label{qwe2}
\end{eqnarray}
holds for any $f\in \hat{M}^{\alpha}_{\gamma,\delta}$.
\end{theorem}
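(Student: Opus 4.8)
The plan is to adapt to the mixed norm $L^p_t(\R;L^q_x(\R))$ the scheme behind the diagonal refined Stein--Tomas estimate \cite[Theorem B.1]{MS2} and behind Theorem \ref{thm:S}. A direct computation shows that under the rescaling $f\mapsto f(\lambda\,\cdot)$ with $\lambda=2^m$ ($m\in\Z$) both sides of \eqref{qwe2} are homogeneous of the same degree $-1/\alpha$, the dyadic grid $\{\tau^j_k\}_{j,k}$ being preserved. Combining this invariance with a Littlewood--Paley decomposition compatible with the hat-Morrey norm, I would first reduce to data $f$ with $\hat f$ supported in a fixed frequency annulus $\{|\xi|\sim1\}$, and then, decomposing once more, to $\hat f$ supported on a single dyadic interval $I=\tau^j_k$ of length $h=2^{-j}\le1$; the contributions of the different annuli are reassembled in $\ell^\delta_j$ at the very end.

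For such a single block the core ingredient is an estimate of the form
\[
\norm{|\d_x|^{\frac1p}e^{-t\d_x^3}f}_{L^p_t(\R;L^q_x(\R))}\le C\,h^{\frac1\gamma-\frac1\alpha}\,\tnorm{\hat f}_{L^{\gamma'}(I)}.
\]
To prove it I would rescale $I$ to a unit interval and complete the cube in the phase $\xi^3$; up to a drift and a unimodular factor this turns $e^{-t\d_x^3}f$ into the Airy evolution of a function whose Fourier transform sits on a unit interval at a high frequency $\sim h^{-1}$. One then re-derives the Strichartz estimate \eqref{eq:mixed} of \cite{G1,MS1} for such frequency-localized data, where the Kenig--Ruiz endpoint $(p,q)=(4,\I)$ contributes a favorable power of the frequency while the endpoint $(p,q)=(\I,2)$ (mass conservation) contributes none; together with H\"older's inequality on $I$ this produces the weight $h^{1/\gamma-1/\alpha}$, and the case distinction in the definition of $\gamma$ records how these two endpoints and the diagonal Stein--Tomas estimate are interpolated, i.e.\ whether the derivative is ``paid for'' in $L^q_x$ or in $L^p_t$. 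The gap $1/q\le1/2-1/p-\si$ from the boundary of the admissible region is exactly the room needed for this re-interpolation to leave a positive power of $h$.

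Triangle inequality over the blocks $I$ only sums these bounds in $\ell^1$. To upgrade to $\ell^\delta$ with $1/\delta=1/2-1/\max(p,q)$ I would use the transversality of the curve $\xi\mapsto(\xi,\xi^3)$: since $\p_\xi^2(\xi^3)=6\xi\ne0$, two pieces $e^{-t\d_x^3}f_I$, $e^{-t\d_x^3}f_J$ with Fourier supports on intervals $I,J$ of common length $h$ and separation $d=\dist(I,J)\gtrsim h$ obey a bilinear estimate for $\norm{(e^{-t\d_x^3}f_I)(e^{-t\d_x^3}f_J)}_{L^{p/2}_t(\R;L^{q/2}_x(\R))}$ that gains a fixed positive power of $h/d$ over the product of the two single-block bounds. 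Whitney-decomposing the off-diagonal part of the frequency$\times$frequency plane into pairs of comparable, well-separated intervals, using the almost-disjointness of the space-time Fourier supports of the products $(e^{-t\d_x^3}f_I)(e^{-t\d_x^3}f_J)$ to sum the Whitney pieces almost orthogonally in $L^{p/2}_tL^{q/2}_x$, and summing the geometric series in $h/d$, one arrives at \eqref{qwe2} with the full $\ell^\delta$ on the right; the near-diagonal pairs $d\lesssim h$ are absorbed by the single-block estimate, and the final Littlewood--Paley resummation over annuli completes the proof.

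I expect the bilinear step in mixed norm to be the main obstacle. In the diagonal case $p=q$ one simply squares and lands in $L^{p/2}_{t,x}$, but here the product of two solutions lives in $L^{p/2}_tL^{q/2}_x$, and one must (i) establish the sharp dependence of this mixed-norm bilinear Airy estimate on the frequency gap and (ii) run the almost-orthogonal summation of the Whitney pieces with the exponent that actually appears---this is what pins down $1/\delta=1/2-1/\max(p,q)$, and it is delicate precisely because when $q/2<2$ the space $L^{q/2}_x$ is not an interpolation space between $L^2_x$ and $L^\infty_x$, so the usual orthogonality-plus-triangle argument has to be replaced by a mixed-norm variant. A secondary point is to verify that the Littlewood--Paley decomposition is compatible with the hat-Morrey norm, so that the per-annulus estimates sum in $\ell^\delta_j$ without loss; this should follow from the local $L^2$-type structure of $\hat M^\beta_{\gamma,\delta}$ emphasised in \cite{MS2}, which is available here since $\gamma\ge2$ throughout the stated range.
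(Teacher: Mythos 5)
Your overall skeleton---bilinearize the square of the left-hand side, Whitney-decompose the off-diagonal frequency pairs, and exploit the localization of the space-time Fourier supports of the products---is indeed the skeleton of the paper's argument (the paper proves \eqref{qwe} in detail and notes that \eqref{qwe2} is similar). But the step you yourself flag as ``the main obstacle'' and then leave unresolved is precisely the content of the paper's proof, and the substitute you sketch would not work. Bounded overlap of the supports $A_{j,k,\ell}$, $B_{j,k,\ell}$ yields orthogonal summation only in $L^2_{t,x}$; in a mixed norm $L^{p/2}_tL^{q/2}_x$ with $p\neq q$ one cannot ``sum the Whitney pieces almost orthogonally'' from disjointness of supports alone, and the diagonal recipe you propose instead (a bilinear estimate gaining a fixed power of $h/d$ in the frequency separation, summed as a geometric series) is exactly the technique of \cite{Shao,MS2} that the paper states does not work in the non-diagonal case. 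What the paper actually does is construct Fourier multipliers $P_{\tilde X_{j,k,\ell}}$ adapted to enlargements of $A_{j,k,\ell}$, $B_{j,k,\ell}$ \emph{in the $\tau$-direction only} (enlarging in $\xi$ as well would destroy the bounded-overlap property for the cubic dispersion, cf.\ Remark \ref{rem:si}), prove that these enlarged regions still have overlap at most $12$ (Proposition \ref{orthgnal}), and prove mixed-norm multiplier bounds which, because the cutoff is smooth only in $\tau$, necessarily lose a power $\sigma$ in one Lebesgue exponent (Proposition \ref{boundness}). Interpolating the resulting $\ell^1$-summed $L^{P}L^{Q}$ bound against the Plancherel-based $\ell^2$-summed $L^2$ bound is what produces the $\ell^\delta$ summation with $1/\delta=1/2-1/\max(p,q)$, and it is also the true source of the restriction $\sigma>0$ and of the $\sigma$-shifts in the definition of $\gamma$; your explanation of $\sigma$ as ``room for re-interpolating the single-block estimate'' misattributes it.

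Two further points. First, the paper does not estimate each Whitney piece by rescaling to unit frequency and re-deriving \eqref{eq:mixed} for localized data; it bounds each bilinear piece directly by Stein interpolation for mixed norms between an $L^{p_\sigma/2}_{t,x}$-type estimate (the argument of \cite[Proposition B.1]{MS2}) and an $L^{p_\sigma/2}_xL^\infty_t$ (resp.\ $L^\infty_xL^{\cdot}_t$) estimate from \cite[Proposition 2.1]{MS1}, which is what produces the weights $|\tau^j_k|^{-2/q}$, $|\tau^j_k|^{-2/p_\sigma}$ and hence the hat-Morrey norm. Second, your preliminary reduction via a Littlewood--Paley decomposition ``compatible with the hat-Morrey norm,'' with reassembly in $\ell^\delta_j$ at the end, is itself unjustified: the $\hat M^\alpha_{\gamma,\delta}$ norm couples all scales $j$ and positions $k$, and no such reduction appears (or is needed) in the paper, since the Whitney decomposition already runs over all dyadic scales simultaneously. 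As written, then, the proposal identifies the right objects but is missing the mechanism (the $\tau$-only smooth cutoff, its almost orthogonality, and the $\sigma$-lossy multiplier bounds) that makes the non-diagonal summation legitimate.
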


We briefly outline the proofs for Theorems \ref{thm:S}, the proof of Theorem \ref{thm:T} is similar. 
The diagonal case $p=q$ can be handled by the bilinear technique 
as in \cite{Shao,MS2}. 
However, this approach does not work well in the non-diagonal case. 
Furthermore, due to lack of an interpolation between the Morrey space and the Lebesgue space,
the desired estimate does not follow by a simple interpolation. 
To overcome those difficulties, we take another approach 
which is based on \cite{BV,KR,MS2,VV}. 
As in the diagonal case, we first rewrite the square of 
the left hand side of (\ref{qwe}) into a bi-linear oscillatory integral. 
We then split the domain of spacetime integral into 
infinitely many rectangles by a Whitney type decomposition. 
By the decomposition, the bilinear form is rewritten as the infinite sum of the bilinear forms of which Fourier supports are
compact and do not intersect each other.
To justify the above decomposition in $L^p_x L^q_t$ or $L^p_t L^q_x$,
we have to add a small \emph{margin} to each rectangles in the Whitney decomposition in purpose of smooth cutoff.
Obviously, this margin produces many doublings which disturb orthogonality of the forms.
However, if the margin is putted so nicely that the resulting doubling is acceptable
then we obtain the desired estimate.
The property is 
summarized as an \emph{almost orthogonal property} of the Fourier supports of the forms.

In the Schr\"odinger case, we can put such margin so that the almost orthogonal property is valid (see \cite{BV}).
However, in the Airy case, the cubic dispersion makes the situation much worse and it seems there is no way to put
margin necessary for smooth cutoff.
An idea here is to put the margin \emph{only in time direction}.
Although this requires an unpleasant restriction $\sigma>0$ 
in Theorems \ref{thm:S} and \ref{thm:T}, we recover the almost orthogonal property.
See Proposition \ref{orthgnal} and Remark \ref{rem:si} for the detail.

Next we give several applications of our refinement estimates. 

%%%%%%%%%%%%%%%%%%%%%%%%%%%%%%%%%%%%%%%%%%%%%%%%%%%%%%%%%%%%%%%%%%%%%%%%%%%%
\subsection{Application 1 -- well-posedness for generalized KdV equation}

As the first application of the refinement of Strichartz' estimates, 
we show the well-posedness of (\ref{gKdV}) in the scale critical 
$\hat{M}^{\beta}_{\gamma,\delta}$ space. 

Local and global well-posedness of the Cauchy problem (\ref{gKdV}) in a scale critical or subcritical Sobolev space 
$H^{s}(\rre)$, $s\ge s_{\alpha}$ has been studied by many authors, 
where $s_\alpha$ is a scale critical exponent, i.e., $s_\alpha:=1/2-1/\alpha$.   
A fundamental work on local well-posedness is due 
to Kenig, Ponce and Vega \cite{KPV2}. They proved that (\ref{gKdV}) is locally 
well-posed in $H^{s}(\rre)$ with $s>3/4$ ($\alpha=1/2$), 
$s\ge1/4$ ($\alpha=1$), $s\ge1/12$ ($\alpha=3/2$) and 
$s\ge s_{\alpha}$ ($\alpha\ge2$). 
%The small data global existence results of \eqref{gKdV} in scale critical spaces
%have been studied by several authors. 
Furthermore, in \cite{KPV2} Kenig, Ponce and Vega 
proved the small data global well-posedness 
and scattering of (\ref{gKdV}) 
in the scale critical space $\dot{H}^{s_{\alpha}}$ 
for $\alpha\ge2$.  
Tao \cite{T} proved 
global well-posedness for small data for (\ref{gKdV}) 
with the quartic nonlinearity $\mu\pt_{x}(u^{4})$ 
in $\dot{H}^{s_{3/2}}$, see also Koch and Marzuola \cite{KoM} for 
the simplified the proof of \cite{T} and made an extension. 
Recently, the authors \cite{MS1} obtained 
global well-posedness for small data for (\ref{gKdV}) 
in the scale critical space $\hat{L}^\alpha$ with $8/5 <\alpha <10/3$. 
%Notice that $\hat{L}^\alpha$ is scale invariant space for 
%(\ref{gKdV}). 

We consider global well-posedness for small data for (\ref{gKdV}) 
in a scale critical hat-Morrey space $|\pt_{x}|^{-\si}
\hat{M}^{\beta}_{\gamma,\delta}$ space. 
It is known that the nonlinear Schr\"{o}dinger equation 
is globally well-posed for small data in the scale critical 
$\hat{M}^{\beta}_{\gamma,\delta}$ space, see \cite{M3,MS2}. 
In the Schr\"odinger case, we only need 
the \emph{diagonal} refined estimate to obtain well-posedness. 
On the other hand, as for (\ref{gKdV}), due to the presence of derivatives in the 
nonlinearity, we also need the \emph{non-diagonal} refined Strichartz estimate for (\ref{A}) 
to yield a similar well-posedeness result.
% the global well-posedness for small data for (\ref{gKdV}) in the scale critical 
% $\hat{M}^{\beta}_{\gamma,\delta}$ space.
In this paper, 
by using the refined %Strichartz inequality for (\ref{A}) in 
 \emph{non-diagonal} estimate in Theorem \ref{thm:S}, we 
shall prove the global well-posedness for small data for (\ref{gKdV}) in the scale critical 
$\hat{M}^{\beta}_{\gamma,\delta}$ space. 

\begin{assumption}\label{A:lwp}
Let $5/3<\alpha \le 20/9$ and $0<\sigma \le \min( 3/5-1/\alpha,1/4-2/(5\alpha))$.
Define $\beta$ by $1/\beta=1/\alpha+\sigma$.
Let $\gamma $ and $\delta$ satisfy %(左辺の$\si$を$2\si$へ変更)
\[
	\frac4{5\alpha} + 2\sigma \le \frac1\gamma < \frac1\beta,\quad
	\frac12-\frac1{5\alpha} \le \frac1\delta < \frac1{\beta'}.
\]
\end{assumption}

\begin{theorem}[Local well-posedness in
$|\pt_{x}|^{-\sigma}\hat{M}^{\beta}_{\gamma,\delta}$]\label{thm:lwp}
Suppose $\alpha$, $\sigma$, $\beta$, $\gamma$. and $\delta$ satisfy Assumption \ref{A:lwp}.
Then, the initial value problem \eqref{gKdV} is locally well-posed in
$|\pt_{x}|^{-\sigma}\hat{M}^{\beta}_{\gamma,\delta}$. More precisely,
for any $|\pt_{x}|^{\sigma}u_{0}\in \hat{M}^{\beta}_{\gamma,\delta}(\rre)$,
there exist an interval $I=I(u_0)$ and a unique solution 
to \eqref{gKdV} satisfying
\begin{eqnarray}
\qquad u\in C(I;|\pt_{x}|^{-\sigma}\hat{M}^{\beta}_{\gamma,\delta}(\rre)) 
\cap L^{\frac{5\alpha}{2}}_x (\R;L^{5\alpha}_t(I)) \cap |\pt_x|^{-\frac1{3\beta}-\sigma} 
L^{3\beta}_{t,x}(I \times \R).
\label{sol}
\end{eqnarray}
For any compact subinterval $I'\subset I$, 
there exists a neighborhood $V$ of $u_{0}$ in 
$|\pt_{x}|^{-\sigma}\hat{M}^{\beta}_{\gamma,\delta}(\rre)$ such that 
the map $u_{0}\mapsto u$ from $V$ into the class defined by \eqref{sol}
with $I'$ instead of $I$ is Lipschitz continuous. 
The solution satisfies $u(t) - e^{-(t-t_0)\d_x^3}u(t_0) \in C(I; \hat{L}^\alpha \cap |\d_x|^{-\sigma} \hat{L}^\beta)$
for any $t_0 \in I$.
\end{theorem}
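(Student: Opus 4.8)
\emph{Proof idea.} The theorem is proved by a contraction‑mapping argument in a scale‑critical resolution space, in the spirit of \cite{MS2}; the new input is that the non‑diagonal refined estimate of Theorem~\ref{thm:S} is now strong enough to close the argument under the present, weaker hypotheses. Write the integral equation
\[
 u(t)=\Phi[u](t):=e^{-(t-t_0)\pt_x^3}u_0+\mu\int_{t_0}^t e^{-(t-t')\pt_x^3}\pt_x\bigl(|u|^{2\alpha}u\bigr)(t')\,dt',
\]
and, for an interval $I\ni t_0$, let $X(I)$ be the space of $u$ for which
\[
 \|u\|_{X(I)}:=\|u\|_{L^\I_t(I;\,|\pt_x|^{-\sigma}\hat{M}^\beta_{\gamma,\delta})}+\|u\|_{L^{5\alpha/2}_x(\R;L^{5\alpha}_t(I))}+\bigl\||\pt_x|^{\frac1{3\beta}+\sigma}u\bigr\|_{L^{3\beta}_{t,x}(I\times\R)}
\]
is finite. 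The plan is to show that $\Phi$ is a contraction on a small closed ball of $X(I)$ once $I$ is chosen so that the free‑solution norms of $u_0$ over $I$ are small (possible, for a fixed $u_0$, by dominated convergence), and then to read off uniqueness, the Lipschitz dependence, the time continuity and the scattering‑type regularity from the very same estimates.

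\emph{Linear and inhomogeneous estimates.} First I would estimate the free part: Theorem~\ref{thm:S} with $(p,q)=(5\alpha/2,5\alpha)$ controls $\|e^{-(t-t_0)\pt_x^3}u_0\|_{L^{5\alpha/2}_xL^{5\alpha}_t}$, and Theorem~\ref{thm:T} with $p=q=3\beta$ (a small auxiliary parameter playing the role of $\sigma$) controls $\||\pt_x|^{\frac1{3\beta}+\sigma}e^{-(t-t_0)\pt_x^3}u_0\|_{L^{3\beta}_{t,x}}$, in each case by $\||\pt_x|^\sigma u_0\|_{\hat{M}^\beta_{\gamma',\delta'}}$ for explicit pairs $(\gamma',\delta')$; the $L^\I_t$‑component is immediate because $e^{-t\pt_x^3}$ is an isometry on $|\pt_x|^{-\sigma}\hat{M}^\beta_{\gamma,\delta}$. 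Monotonicity of the hat‑Morrey norms in the lower indices (the same comparison principle that underlies Appendix~A) then shows that the data bound $\||\pt_x|^\sigma u_0\|_{\hat{M}^\beta_{\gamma,\delta}}$ with $(\gamma,\delta)$ in the range of Assumption~\ref{A:lwp} dominates all of the above — this is exactly where that range comes from. For the Duhamel term, a $TT^*$/duality argument together with the Christ--Kiselev lemma (applicable since $(5\alpha/2,5\alpha)$ and $(3\beta,3\beta)$ are non‑endpoint) shows that $F\mapsto\int_{t_0}^t e^{-(t-t')\pt_x^3}\pt_x F\,dt'$ maps the appropriate dual Strichartz spaces into $X(I)$, the $L^\I_t(|\pt_x|^{-\sigma}\hat{M}^\beta_{\gamma,\delta})$ part being handled by Minkowski's inequality and the local‑$L^2$ structure of $\hat{M}^\beta_{\gamma,\delta}$ as in \cite{MS2}.

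\emph{Nonlinear estimate.} The crux is then to bound $\pt_x(|u|^{2\alpha}u)$ in the relevant dual norm by $C\|u\|_{X(I)}^{2\alpha+1}$, together with the analogous Lipschitz bound for $\pt_x(|u|^{2\alpha}u)-\pt_x(|v|^{2\alpha}v)$. One derivative is absorbed into the homogeneous norm $\||\pt_x|^{\frac1{3\beta}+\sigma}u\|_{L^{3\beta}_{t,x}}$ appearing in \eqref{sol}; the remaining fractional derivative is then distributed over the $2\alpha+1$ factors by the Kato--Ponce fractional Leibniz rule and the chain rule for the non‑smooth map $z\mapsto|z|^{2\alpha}z$ (admissible since $2\alpha>1$), after which Hölder in $(t,x)$ and, whenever a Lebesgue exponent has to be exchanged for the Morrey exponent, the refined estimates themselves close the bound. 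Verifying that every Hölder split and every embedding used here is admissible and scale‑consistent is precisely what forces $5/3<\alpha\le 20/9$ and the stated bounds on $\sigma$, $\gamma$ and $\delta$.

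\emph{Conclusion and main obstacle.} Combining these estimates, for $|I|$ small $\Phi$ becomes a contraction on a ball of $X(I)$, and its fixed point is the desired solution; uniqueness in the class \eqref{sol} and Lipschitz continuity of $u_0\mapsto u$ on compact subintervals follow at once from the contraction estimates. Continuity of $u$ in time with values in $|\pt_x|^{-\sigma}\hat{M}^\beta_{\gamma,\delta}$ follows by a density and dominated‑convergence argument, and applying the Duhamel estimate once more with $\hat{L}^\alpha\cap|\pt_x|^{-\sigma}\hat{L}^\beta$ on the left in place of $\hat{M}^\beta_{\gamma,\delta}$ — now legitimate because the nonlinearity is already known to lie in the corresponding dual space — yields $u(t)-e^{-(t-t_0)\pt_x^3}u(t_0)\in C(I;\hat{L}^\alpha\cap|\pt_x|^{-\sigma}\hat{L}^\beta)$. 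The principal difficulty is the nonlinear estimate: controlling $\pt_x(|u|^{2\alpha}u)$ in a dual Morrey/Lebesgue norm for fractional $\alpha$ while staying scale‑critical, and it is here that the \emph{non-diagonal} refinement of Theorem~\ref{thm:S}, rather than merely its diagonal counterpart, is indispensable, owing to the derivative in the gKdV nonlinearity.
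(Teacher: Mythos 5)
Your overall skeleton (contraction via Strichartz-type norms, fractional Leibniz for the nonlinearity, and recovery of the Morrey-valued continuity through $\hat{L}^{\beta}\hookrightarrow\hat{M}^{\beta}_{\gamma,\delta}$ at the end) overlaps with the paper, but there is a genuine gap at the crux: how the full derivative $\d_x$ in the nonlinearity is recovered. Your resolution space $X(I)$ contains only $L^{\I}_t(|\d_x|^{-\sigma}\hat{M}^{\beta}_{\gamma,\delta})$, $S(I)$ and the diagonal norm $\||\d_x|^{\frac1{3\beta}+\sigma}u\|_{L^{3\beta}_{t,x}}$, and you assert that ``one derivative is absorbed into'' the latter. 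That norm carries only $\frac1{3\beta}+\sigma<1$ derivatives, so it cannot absorb $\d_x$; the missing amount must be gained from the retarded estimate via the Kenig--Ponce--Vega local smoothing effect, which is exactly why the paper's contraction (Lemma \ref{lem:lwp_pre}, imported from \cite{MS1}) is run in $S(I)\cap L(I)$ with the non-diagonal smoothing norm $\|u\|_{L(I)}=\||\d_x|^{1/\alpha}u\|_{L^{5\alpha}_x L^{5\alpha/3}_t}$, and why the inhomogeneous estimate \cite[Proposition 2.5]{MS1}, with its acceptable/conjugate-acceptable pairs, is invoked in \eqref{k1}. Your $X(I)$ has no such smoothing component, and you never verify that a retarded estimate gaining a full derivative maps your dual norm into $S(I)$ and into the Morrey-valued $L^{\I}_t$ component; ``$TT^*$ plus Christ--Kiselev'' does not by itself produce this gain. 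A related soft spot: the $L^{\I}_t$ Morrey part of the free solution does not become small as $|I|\to0$ (the Airy flow is an isometry on $\hat{M}^{\beta}_{\gamma,\delta}$), so your smallness-by-dominated-convergence step applies only to the other components, and you would have to arrange the nonlinear estimate so that the contraction is driven by those components alone --- something your sketch does not address.

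For comparison, the paper never contracts in a Morrey-valued space at all. It (a) uses Theorem \ref{thm:S} purely linearly, via \eqref{eq:linear1}, to show that Morrey data have free evolutions with finite (hence small on small intervals) $S$ and $L$ norms, so that the $\hat{L}^{\alpha}$-framework well-posedness of \cite{MS1} applies as a black box (Corollary \ref{cor:existence}); (b) proves the $D_\sigma$ bound a posteriori by the persistence estimates \eqref{k1}--\eqref{k2} (diagonal refined estimate, inhomogeneous Strichartz estimate, fractional Leibniz), subdividing $I$ into pieces on which $\|u\|_{S(I_j)}$ is small; and (c) obtains the Morrey-valued continuity by showing that $|\d_x|^{\sigma}$ applied to the Duhamel term lies in $C(I;\hat{L}^{\beta})$ and then using $\hat{L}^{\beta}\hookrightarrow\hat{M}^{\beta}_{\gamma,\delta}$. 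In particular, your closing claim that the non-diagonal refinement is indispensable ``in the nonlinear estimate'' misplaces its role: the nonlinear estimates are carried out entirely in Lebesgue-type spacetime norms, and Theorem \ref{thm:S} enters only through the linear bound \eqref{eq:linear1} on $e^{-t\d_x^3}u_0$ in the non-diagonal norms $L$ and $S$, which is what allows Morrey data to feed the \cite{MS1} machinery. To salvage your single-contraction scheme you would need to add $L(I)$ (or $M(I)$) to $X(I)$ and prove or cite the corresponding retarded estimates; as written, the contraction does not close.
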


Throughout this paper, we call a function $u$ which satisfies \eqref{sol} and solves the corresponding integral equation
as a $|\d_x|^{-\sigma}\hat{M}^\beta_{\gamma,\delta}$-solution to \eqref{gKdV} on an interval $I$.

\begin{theorem}[Small data scattering in $|\pt_{x}|^{-\sigma}\hat{M}^{\beta}_{\gamma,\delta}$]\label{thm:SDS} 

Suppose $\alpha$, $\sigma$, $\beta$, $\gamma$. and $\delta$ satisfy Assumption \ref{A:lwp}.
Then, there exists $\varepsilon_{0}>0$ 
such that if $|\pt_{x}|^{\sigma}u_{0}\in \hat{M}^{\beta}_{\gamma,\delta}(\rre)$ 
satisfies $\||\pt_{x}|^{\sigma}u_{0}\|_{\hat{M}^{\beta}_{\gamma,\delta}}\le\varepsilon_{0}$,
then the solution $u(t)$ to \eqref{gKdV} given in Theorem \ref{thm:lwp}
is global in time and scatters for both time directions.
Moreover,
\begin{equation}\label{SDSbound}
\||\d_x|^{\si}u\|_{L_{t}^{\infty}(\R; \hat{M}_{\gamma,\delta}^{\beta})}
+\|u\|_{L^{\frac{5\alpha}{2}}_x (\R;L^{5\alpha}_t(\R))}
\le 2 \||\d_x|^{\si}u_{0}\|_{\hat{M}_{\gamma,\delta}^{\beta}}.
\end{equation} 
\end{theorem}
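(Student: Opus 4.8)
\textbf{Proof proposal for Theorem \ref{thm:SDS}.}

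The plan is to run a standard contraction/continuity argument on the global time interval $I=\R$, using the already-established local theory of Theorem \ref{thm:lwp} together with the nonlinear estimates underlying it, but now tracking the norm on all of $\R$ instead of a finite subinterval. First I would introduce the solution space
\[
	X:=\Bigl\{u:\ \||\d_x|^{\si}u\|_{L^\infty_t(\R;\hat{M}^\beta_{\gamma,\delta})}+\|u\|_{L^{5\alpha/2}_x(\R;L^{5\alpha}_t(\R))}+\||\d_x|^{-\frac1{3\beta}-\si}u\|_{L^{3\beta}_{t,x}(\R\times\R)}<\infty\Bigr\},
\]
and consider the Duhamel map $\Phi(u)(t)=e^{-(t-t_0)\d_x^3}u_0+\mu\int_{t_0}^t e^{-(t-t')\d_x^3}\d_x(|u|^{2\alpha}u)\,dt'$. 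The linear term is controlled by Theorem \ref{thm:S} (and the conservation of $\|e^{-t\d_x^3}f\|_{\hat M^\beta_{\gamma,\delta}}$ in $t$, since the Airy flow acts as a Fourier multiplier of modulus one); Assumption \ref{A:lwp} is exactly the range of exponents for which the pair $(5\alpha/2,5\alpha)$ and the diagonal pair $(3\beta,3\beta)$ are admissible in Theorems \ref{thm:S} and \cite[Theorem B.1]{MS2}, so $\|e^{-t\d_x^3}u_0\|_X\lesssim\||\d_x|^\si u_0\|_{\hat M^\beta_{\gamma,\delta}}$. For the Duhamel term I would invoke the same nonlinear estimate that drives the local theory: the derivative $\d_x$ in the nonlinearity is absorbed by the smoothing encoded in the $|\d_x|^s$ and $|\d_x|^{1/p}$ gains of the refined Strichartz estimates, and Hölder in the Strichartz norms gives a bound of the form $\|\Phi(u)-e^{-(t-t_0)\d_x^3}u_0\|_X\lesssim\|u\|_X^{2\alpha+1}$, with the crucial point that, because the exponents are scaling-critical, this holds on all of $\R$ with a constant independent of the time interval.

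Granting these two estimates, I would close the argument in the usual way: if $\||\d_x|^\si u_0\|_{\hat M^\beta_{\gamma,\delta}}=:\eps\le\eps_0$ with $\eps_0$ small, then $\Phi$ maps the ball $\{u\in X:\|u\|_X\le 2C\eps\}$ into itself and is a contraction there, yielding a unique global solution in $X$ with $\|u\|_X\le 2C\eps$; rescaling the constants gives precisely the bound \eqref{SDSbound}. Uniqueness in the local class of Theorem \ref{thm:lwp} on every compact subinterval, combined with the global $X$-bound, upgrades this to the statement that the solution furnished by Theorem \ref{thm:lwp} is global. Scattering then follows from the finiteness of $\|u\|_{L^{5\alpha/2}_x(\R;L^{5\alpha}_t(\R))}$: one shows that $e^{(t-t_0)\d_x^3}u(t)=u_0+\mu\int_{t_0}^t e^{-(t'-t_0)\d_x^3}\d_x(|u|^{2\alpha}u)\,dt'$ is Cauchy in $|\d_x|^{-\si}\hat{L}^\beta\cap\hat L^\alpha$ as $t\to\pm\I$, because the tail $\mu\int_{T}^{\pm\I}e^{-(t'-t_0)\d_x^3}\d_x(|u|^{2\alpha}u)\,dt'$ has $|\d_x|^{-\si}\hat L^\beta$-norm bounded by $\|u\|_{L^{5\alpha/2}_x(|T|<|t|;L^{5\alpha}_t)}^{2\alpha}\|u\|_X\to0$; the limits define the scattering states, and the last sentence of Theorem \ref{thm:lwp} (that $u-e^{-(t-t_0)\d_x^3}u_0\in C(I;\hat L^\alpha\cap|\d_x|^{-\si}\hat L^\beta)$) guarantees the difference $u(t)-e^{-(t-t_0)\d_x^3}u_\pm$ lives in the right space and tends to $0$.

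The main obstacle, and the only place real work is needed, is the global-in-time nonlinear estimate bounding the Duhamel term in $X$ by $\|u\|_X^{2\alpha+1}$: one must check that the derivative loss from $\d_x(|u|^{2\alpha}u)$ is exactly matched by the smoothing exponents $s=-1/p+2/q$ and $1/p$ appearing in Theorems \ref{thm:S} and \ref{thm:T}, and that the auxiliary norms $L^{5\alpha/2}_xL^{5\alpha}_t$ and $|\d_x|^{-1/(3\beta)-\si}L^{3\beta}_{t,x}$ are chosen so that a fractional Leibniz rule plus Hölder distributes $2\alpha+1$ factors of $u$ across these exact norms; the constraints in Assumption \ref{A:lwp} are precisely what make this bookkeeping consistent. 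Since this estimate is essentially the one already proved for the local theory in Theorem \ref{thm:lwp} — only now read off on $\R$ rather than a bounded interval, which is legitimate because every exponent is scaling-invariant — I expect the proof to be short, mostly a matter of assembling Theorem \ref{thm:S}, the diagonal estimate of \cite{MS2}, and a fixed-point lemma, then extracting scattering from the finite global Strichartz norm.
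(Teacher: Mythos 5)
Your proposal is correct and is essentially the paper's (largely implicit) argument: the paper gets Theorem \ref{thm:SDS} by combining the global smallness of the free evolution in $S(\R)\cap L(\R)$ coming from Theorem \ref{thm:S} (estimate \eqref{eq:linear1}) with the small-data contraction of Lemma \ref{lem:lwp_pre} applied on $I=\R$, the $L^\infty_t\hat{M}^\beta_{\gamma,\delta}$ and $D_\sigma$ bounds as in the proof of Theorem \ref{thm:lwp}, and the scattering criterion of Theorem \ref{thm:sc}, which is exactly your fixed-point-plus-finite-global-Strichartz-norm scheme. The only structural difference is that you close one contraction in a space that also carries the $L^\infty_t\hat{M}^\beta_{\gamma,\delta}$ and $D_\sigma$ norms, whereas the paper contracts in $S\cap Z$ first and recovers the remaining bounds afterwards.
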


\begin{remark} 
For scale \emph{subcritical spaces}, there are many results on 
the small data scattering for the generalized KdV equation 
(\ref{gKdV}) for $\alpha\ge1$, see \cite{CW,DZ,HN1} for instance.
\end{remark}

%%%%%%%%%%%%%%%%%%%%%%%%%%%%%%%%%%%%%%%%%%%%%%%%%%%%%%%%%%%%%%%%
\subsection{Application 2 -- existence of minimal non-scattering solution}

We next apply the refined Strichartz estimate to 
construct a minimal non-scattering solution to 
mass-subcritical generalized KdV equation (\ref{gKdV}). 
As for \eqref{gKdV}, the mass-critical 
case $\alpha=2$ is most extensively studied in this direction. 
Killip-Kwon-Shao-Visan \cite{KKSV} constructed 
a minimal blow-up solution to the mass critical KdV equation with the focusing nonlinearity 
in the framework of $L^{2}$. 
Dodson \cite{D2} proved the global well-posedness and scattering in $L^{2}$ 
for the mass critical KdV equation with the defocusing nonlinearity. 

The authors \cite{MS2} showed a existence of a minimal non-scattering solution of 
\eqref{gKdV} with the mass-subcritical case. 
We constructed the critical element by establishing 
the concentration compactness in the framework of 
$\hat{M}^\alpha_{2,\delta}$ space. 
%In \cite{MS2}, existence of a critical element of \eqref{gKdV} with 
%the mass-subcritical case is established on concentration compactness
%argument initiated by Kenig and Merle \cite{KM}. 
%One of the key tools for the argument is a linear profile decomposition, 
%which is a kind of compactness.
%The linear profile decomposition is not obtained in $\hat{L}^\alpha$ spaces due to lack of 
%decoupling (in)equality for $\alpha\neq2$.
%Using local $L^2$ structure, it is established in $\hat{M}^\alpha_{2,\delta}$ space,
%which is strictly larger than $\hat{L}^\alpha$ space.
On the other hand, well-posedness result was not proved in $\hat{M}^\alpha_{2,\delta}$ but
in $\hat{L}^\alpha$ %, as mentioned above.
due to lack of non-diagonal refined Strichartz estimate.
This disagreement caused some technical restrictions in the previous result \cite{MS2}.
In this paper, by using the non-diagonal refined Strichartz estimate (Theorem \ref{thm:S}), 
we resolve the disagreement and show existence of critical element under a reasonable assumption.

Before we state our main theorems in this subsection, 
we introduce several notation. In the rest of this section, 
a solution always implies a $|\d_x|^{-\sigma}\hat{M}^\beta_{2,\delta}$-solution
unless otherwise stated. We introduce a deformations associated with the 
function space $|\d_{x}|^{-\sigma}\hat{M}^{\beta}_{2,\delta}$:
\begin{itemize}
\item Translation in Physical side: $(T(y)f)(x) := f(x-y)$.
%\item {\bf Translation in Fourier side}: $(P(\xi) f)(x) := e^{-ix\xi} f(x)$
\item Airy flow: $(A(t) f)(x) = (e^{-t\pt_x^3} f)(x)$.
\item Dilation (scaling): $(D(h) f)(x) = h^{\alpha} f(hx)$.
\end{itemize}
Note that $|\d_{x}|^{-\sigma}\hat{M}^{\beta}_{2,\delta}$-norm is invariant under the 
above group actions. 

For a solution $u$ on $I$, take $t_0 \in I$ and set 
\begin{align*}
T_{\mathrm{max}}&:=
\sup\left\{T>t_0 \ |\
u(t) \text{ can be extended to a solution on }  [t_0,T).
\right\},\\
T_{\mathrm{min}}&:=
\sup\left\{T>-t_0\ |\
u(t) \text{ can be extended to a solution on }(-T,t_0].
\right\},\\
I_{\max}&=I_{\max}(u):=(-T_{\mathrm{min}},T_{\mathrm{max}}).
\end{align*}
\begin{definition}[Scattering]\label{def:scattering}
We say a solution $u(t)$ scatters forward in time
(resp.\ backward in time)
if $T_{\mathrm{min}}=\infty$ (resp. $T_{\mathrm{max}}=\I$) and if $|\d_x|^\sigma e^{t\d_x^3} u(t)$ converges in $\hat{M}^{\beta}_{2,\delta}$ as $t\to\I$ (resp. $t\to-\I$).
\end{definition}
We define 
\[
	E_{1}:=
 	\inf\left\{ \inf_{t\in I_{\max}}\tnorm{|\d_x|^\sigma u(t)}_{\hat{M}^\beta_{2,\delta}} \left|
 	\begin{aligned}
	&u(t)\text{ is a solution to }\eqref{gKdV} \text{ that}\\
	&\text{does not scatter forward in time.}
	\end{aligned}
	\right.\right\}.
\]
Theorem \ref{thm:SDS} is represented as $E_1>0$.
Remark that it holds that
\[
	E_{1}=
 	\inf\left\{ \tnorm{|\d_x|^\sigma u(0)}_{\hat{M}^\beta_{2,\delta}} \left|
 	\begin{aligned}
	&u(t)\text{ is a solution to }\eqref{gKdV}\text{ that does}\\
	&\text{not scatter forward in time, }0\in I_{\max}(u).
	\end{aligned}
	\right.\right\}.
\]
by the time translation symmetry.
Further, one sees that $E_1$ is the supremum of the number $\eps_0$ for which Theorem
\ref{thm:SDS} is true.

We also introduce another infimum value.
\begin{eqnarray*}
	E_{2}&:=&
 	\inf\left\{ \varlimsup_{t \uparrow T_{\max}}\tnorm{|\d_x|^\sigma u(t)}_{\hat{M}^\beta_{2,\delta}} \left|
 	\begin{aligned}
	&u(t)\text{ is a solution to }\eqref{gKdV} \text{ that}\\
	&\text{does not scatter forward in time.}
	\end{aligned}
	\right.\right\}.
\end{eqnarray*}
By definition, $E_1 \le E_2 \le \norm{|\d_x|^\sigma Q}_{\hat{M}^\beta_{2,\delta}}$.
For another characterization of this quantity, see Remark \ref{rmk:E2Ec}.
The goal is to determine the explicit value of $E_j$ ($j=1,2$).
Here, we will show that existence of minimizers to both $E_1$ and $E_2$,
which would be a important step.

In what follows, we consider the focusing case $\mu=-1$ only.
However, the focusing assumption is used only for assuring $E_j$ are finite.
Our analysis work also in the defocusing case $\mu=+1$ if we assume $E_j$ are finite.

\vskip2mm

\begin{assumption}\label{A:min}
We suppose Assumption \ref{A:lwp} with $\gamma=2$ and exclude the endpoint cases, i.e.,
Let $5/3 < \alpha < 12/5$ and $\max(0,1/2-1/\alpha) <\sigma < \min(3/5-1/\alpha,
1/4-2/(5\alpha))$.
% \footnote{たぶん $\sigma=3/5 - 1/\alpha$ の方の endpoint は除かなくてok. 
% しかし, 簡単のためにこちらも除くことにする.}.
Define $\beta \in (5/3,2)$ by $1/\beta = 1/\alpha+\sigma$
and let $1/\delta \in (1/2-1/(5\alpha),1/\beta')$.
\end{assumption}

\begin{theorem}[Analysis of $E_1$]\label{thm:minimal}
Suppose that Assumption \ref{A:min} is satisfied.
Then, $0<E_1 \le c_\alpha \norm{|\d_x|^\sigma Q}_{\hat{M}^\beta_{2,\delta}}$.
Furthermore, there exists a minimizer $u_1(t)$ to $E_1$ in the following sense:
$u_1(t)$ is a solution to \eqref{gKdV} with
maximal interval $I_{\mathrm{max}}(u_1) \ni 0$ and
\begin{enumerate}
\item $u_1(t)$ does not scatter forward in time;
\item $u_1(t)$ attains $E_1$ in such a sense that either one of the following two properties holds;
\begin{enumerate}
\item $\norm{|\d_x|^\sigma u_1(0)}_{\hat{M}^\beta_{2,\delta}} = E_1$;
\item $u_1(t)$ scatters backward in time and $u_{1,-}:= \lim_{t\to-\I} e^{t\d_x^3}u_1(t)$ satisfies
$\norm{|\d_x|^\sigma u_{1,-}}_{\hat{M}^\beta_{2,\delta}} = E_1$.
\end{enumerate}
\end{enumerate}
\end{theorem}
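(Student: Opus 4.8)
The plan is to run the Kenig–Merle concentration–compactness scheme, adapted to the $|\d_x|^{-\sigma}\hat{M}^\beta_{2,\delta}$ framework. First I would record the variational structure that makes $E_1$ finite: in the focusing case $\mu=-1$ the soliton $Q$ (the ground state travelling wave, scaled suitably) produces an explicit non-scattering solution, and evaluating its $|\d_x|^\sigma$-norm at $t=0$ after optimizing over the symmetry group $T(y)$, $A(t)$, $D(h)$ gives the upper bound $E_1 \le c_\alpha\norm{|\d_x|^\sigma Q}_{\hat{M}^\beta_{2,\delta}}$; the lower bound $E_1>0$ is exactly Theorem~\ref{thm:SDS} (small data scattering), already proved.

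Next comes the heart of the argument: existence of a minimizing sequence and extraction of a critical element. Take a sequence of solutions $u_n(t)$ on $I_{\max}(u_n)\ni 0$ that do not scatter forward in time with $\inf_{t}\norm{|\d_x|^\sigma u_n(t)}_{\hat{M}^\beta_{2,\delta}}\to E_1$; by the time–translation reduction noted in the excerpt we may arrange $\norm{|\d_x|^\sigma u_n(0)}_{\hat{M}^\beta_{2,\delta}}\to E_1$. Apply the linear profile decomposition for the Airy flow in $|\d_x|^{-\sigma}\hat{M}^\beta_{2,\delta}$ to the data $u_n(0)$; this is the tool that requires the refined non-diagonal Strichartz estimate (Theorem~\ref{thm:S}), since the profile decomposition rests on the local-$L^2$ structure of $\hat{M}^\beta_{2,\delta}$ together with the refined estimate controlling the remainder in the scattering norm $L^{5\alpha/2}_x L^{5\alpha}_t$. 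One obtains $u_n(0)=\sum_{j=1}^J g_n^j \phi^j + w_n^J$ with orthogonality of the parameters $(y_n^j,t_n^j,h_n^j)$, a Pythagorean-type decoupling of the $|\d_x|^\sigma\hat{M}^\beta_{2,\delta}$-norm (the footnote in the excerpt flags that this decoupling holds in $\hat{M}^\alpha_{2,\delta}$), and the remainder $w_n^J$ small in the scattering norm as $J\to\infty$. A standard argument then shows that if more than one profile were nontrivial, or if the single surviving profile had norm strictly below $E_1$, then the nonlinear solutions associated to each profile would all scatter (by minimality of $E_1$ and the small-data theory), and a nonlinear superposition / perturbation argument using the well-posedness of Theorem~\ref{thm:lwp} would force $u_n$ itself to scatter for large $n$ — a contradiction. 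Hence exactly one profile survives, carrying the full norm $E_1$, and its associated nonlinear solution $u_1$ is the desired minimizer. The dichotomy in conclusion~(2) — whether $\norm{|\d_x|^\sigma u_1(0)}=E_1$ or $u_1$ scatters backward with $\norm{|\d_x|^\sigma u_{1,-}}=E_1$ — arises from whether the surviving profile's time parameter $t_n^1$ stays bounded or diverges to $\pm\infty$; in the divergent case the nonlinear solution is asymptotic to a linear Airy evolution in the past, which gives the backward scattering and identifies $u_{1,-}$ with the profile.

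The remaining steps are routine bookkeeping: a stability/perturbation lemma for \eqref{gKdV} in the norms appearing in \eqref{sol} (this follows from Theorem~\ref{thm:lwp} and the refined estimates by the usual contraction estimate on differences), the persistence of the ``does not scatter'' property under the limiting procedure, and the fact that the infimum defining $E_1$ is attained at $t=0$ along the extracted solution (using lower semicontinuity of the norm and the time–translation characterization of $E_1$). The main obstacle I anticipate is the decoupling inequality and the compatibility of the profile decomposition with the nonlinear flow: because the nonlinearity $\mu\d_x(|u|^{2\alpha}u)$ carries a derivative, one must verify that the scattering norm $L^{5\alpha/2}_x L^{5\alpha}_t$ together with the auxiliary $|\d_x|^{-1/(3\beta)-\sigma}L^{3\beta}_{t,x}$ norm is exactly the norm controlled by Theorem~\ref{thm:S} under Assumption~\ref{A:min}, and that the nonlinear profiles can be approximated by solutions with the correct space-time integrability — this is precisely where the passage from $\hat{L}^\alpha$ to $\hat{M}^\beta_{2,\delta}$ in \cite{MS2} was obstructed, and where the new non-diagonal estimate does the work.
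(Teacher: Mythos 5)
Your proposal is correct and follows essentially the same route as the paper: a minimizing sequence normalized at $t=0$, the linear profile decomposition of Theorem \ref{thm:lpd} with its $\delta$-power decoupling, nonlinear profiles plus an approximate-solution/stability argument (where the non-diagonal refined estimate of Theorem \ref{thm:S} is precisely what lets the profile sums be controlled in the $L\cap S$ norms, cf.\ Remark \ref{rem:restriction}), and the final dichotomy read off from the behavior of the surviving profile's time parameters. The only point to state explicitly is that the case $s_n^{j_0}\to+\infty$ is excluded because it would force the surviving nonlinear profile to scatter forward in time; with that, your sketch matches the paper's proof.
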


\begin{remark}
Let us mention the difference between the previous results in \cite{KKSV,MS2}.
In these papers, a priori knowledge of the relation between value of $E_1$ and the same value for a corresponding nonlinear Schr\"odinger equation is assumed.
% Further, it is not clear whether the assumption is true.
In our theorem, we do not need this kind of assumption.
The assumption is used to exclude the case where $E_1$ is attained by a sequence of initial data of the form
$f(x) \cos (\xi_n x)$ with $\xi_n\to\I$ as $n\to\I$.
The case may happen because the state spaces used in \cite{KKSV,MS2} are $\hat{L}^\alpha$, in which the operation $e^{ix\xi}$ is unitary.
In our case, the state space $|\d_x|^{-\sigma} \hat{M}^\beta_{2,\delta}$ contains derivative and so 
the above case does not take place.
%  the multiplication by
% the linear oscillation is not uniformly bounded with respect to $\xi$.
% Namely, 
% it holds for any $f \in |\d_x|^{-\sigma} \hat{M}^\beta_{2,\delta}$ that
% $\norm{|\d_x|^\sigma e^{ibx}f}_{\hat{M}^{\beta}_{2,\delta}} \to \I$ as $|b|\to\I$.
Thus, we do not need the assumption.
\end{remark}

\begin{theorem}[Analysis of $E_2$]\label{thm:minimal2}
Suppose that Assumption \ref{A:min} is satisfied.
Then, $E_1\le E_2 \le \norm{|\d_x|^\sigma Q}_{\hat{M}^\beta_{2,\delta}}$.
Furthermore, there exists a minimizer $u_2(t)$ to $E_2$ in the following sense:
$u_2(t)$ is a solution to \eqref{gKdV} with
maximal interval $I_{\mathrm{max}}(u_2) \ni 0$ and
\begin{enumerate}
\item $u_2(t)$ does not scatter forward and backward in time;
\item Three quantities 
$$\displaystyle \sup_{t \in \R} \norm{|\d_x|^\sigma u_2(t)}_{\hat{M}^\beta_{2,\delta}},\quad \varlimsup_{t\uparrow T_{\max}} \norm{|\d_x|^\sigma u_2(t)}_{\hat{M}^\beta_{2,\delta}},\quad \varlimsup_{t\downarrow T_{\min}} \norm{|\d_x|^\sigma u_2(t)}_{\hat{M}^\beta_{2,\delta}}$$ are equal to $E_2$.
\item $u_2(t)$ is precompact modulo symmetries, i.e., there exist a scale function $N(t): I_{\max} \to \R_+$ and
a space center $y(t): I_{\max}\to \R$ such that the set
$\{ (D(N(t))T(y(t)))^{-1} u_2(t)\ |\ t \in I_{\max} \} \subset 
	|\d_x|^{-\sigma} \hat{M}^\beta_{2,\delta}$
is precompact.
\end{enumerate}
\end{theorem}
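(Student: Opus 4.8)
The plan is to run the concentration compactness argument of Kenig--Merle \cite{KM} in the state space $|\d_x|^{-\sigma}\hat{M}^\beta_{2,\delta}$, now available because Theorem \ref{thm:S} supplies the non-diagonal refined Strichartz estimate that powers both the well-posedness theory (Theorem \ref{thm:lwp}) and the linear profile decomposition. First I would establish the inequality $E_1 \le E_2 \le \norm{|\d_x|^\sigma Q}_{\hat{M}^\beta_{2,\delta}}$: the left bound is immediate from the definitions of $E_1$ and $E_2$ (the $\varliminf$ over $I_{\max}$ is dominated by the $\varlimsup$ toward $T_{\max}$), while the right bound follows by exhibiting the soliton $Q$ (or rather the traveling wave associated with \eqref{gKdV}, which does not scatter) as an admissible competitor, since Assumption \ref{A:min} places $\alpha$ in the mass-subcritical range where such a solution exists.

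Next I would construct the minimizer. Take a minimizing sequence $u_n(t)$ of non-scattering solutions with $\varlimsup_{t\uparrow T_{\max}(u_n)} \norm{|\d_x|^\sigma u_n(t)}_{\hat{M}^\beta_{2,\delta}} \to E_2$; normalizing the time so that $0 \in I_{\max}(u_n)$ and $\norm{|\d_x|^\sigma u_n(0)}_{\hat{M}^\beta_{2,\delta}}$ is close to $E_2$ along a suitable subsequence, I apply the linear profile decomposition to $\{|\d_x|^\sigma u_n(0)\}$ in $\hat{M}^\beta_{2,\delta}$. The decoupling inequality for $\hat{M}^\beta_{2,\delta}$ (available thanks to its local $L^2$ structure, as noted in the footnote after \cite{MS2}) forces all but one profile to vanish, and the remaining profile's nonlinear evolution must itself be a non-scattering solution with norm $\le E_2$; minimality pins it down as $u_2$. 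The precompactness modulo the symmetry group $\{D(h)T(y)\}$ in item (3) comes from rerunning the profile decomposition along any sequence of times $t_n \in I_{\max}(u_2)$ approaching $T_{\max}$: a nontrivial second profile or a vanishing single profile would each contradict minimality via the stability/perturbation lemma from Theorem \ref{thm:lwp}. Item (2), the equality of the three quantities $\sup_{t}$, $\varlimsup_{t\uparrow T_{\max}}$, $\varlimsup_{t\downarrow T_{\min}}$ with $E_2$, then follows: precompactness of the orbit makes the $\hat{M}^\beta_{2,\delta}$-norm essentially constant up to the symmetry group (whose action is norm-preserving), and the non-scattering in both time directions (item (1)) is a consequence — if $u_2$ scattered forward, its norm near $T_{\max}$ would drop below $E_1 \le E_2$, a contradiction.

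The main obstacle is the profile decomposition step and, with it, the delicate bookkeeping of which symmetry parameters (translation $y$, scaling $h$, and time-translation) can escape to infinity. Unlike the $\hat{L}^\alpha$ setting of \cite{MS2}, the Morrey norm is not invariant under the frequency-modulation $f \mapsto e^{ix\xi}f$, which is precisely what the remark after Theorem \ref{thm:minimal} exploits to rule out the degenerate minimizing configuration $f(x)\cos(\xi_n x)$; I must verify that this modulation genuinely fails to preserve $|\d_x|^{-\sigma}\hat{M}^\beta_{2,\delta}$ and that therefore the profile decomposition needs only the three-parameter group, and then confirm the decoupling inequality is compatible with the non-diagonal Strichartz norms $L^{5\alpha/2}_x L^{5\alpha}_t$ and $|\d_x|^{-1/(3\beta)-\sigma}L^{3\beta}_{t,x}$ appearing in \eqref{sol}. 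A secondary difficulty is the ``double limsup'' structure of $E_2$: controlling $\varlimsup_{t\uparrow T_{\max}}$ rather than a pointwise infimum requires care in choosing the time-normalization of the minimizing sequence so that compactness is not lost as $t_n \uparrow T_{\max}(u_n)$, but this is handled by the standard diagonal argument once the one-profile reduction is in place.
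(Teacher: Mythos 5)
Your overall route (profile decomposition plus stability, in the $|\d_x|^{-\sigma}\hat{M}^\beta_{2,\delta}$ framework made possible by Theorem \ref{thm:S}) is indeed the paper's, and your treatment of $E_1\le E_2\le\norm{|\d_x|^\sigma Q}_{\hat{M}^\beta_{2,\delta}}$ is fine. But the heart of the paper's proof is the \emph{time normalization} of the minimizing sequence, and that is exactly where your sketch has a genuine gap. The paper chooses, for each $n$, two times $t_n<t_n'$ in $I_{\max}(u_n)$ with $\sup_{t\in[t_n,T_{\max}(u_n))}\norm{|\d_x|^\sigma u_n(t)}_{\hat{M}^\beta_{2,\delta}}\in[E_2,E_2+\frac2n]$ and $\norm{u_n}_{S([t_n,t_n'])}\ge n$, and translates $t_n'$ to $0$ \emph{before} applying the profile decomposition. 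The divergent Strichartz norm accumulated to the left of the normalization time is what shows the surviving nonlinear profile $\Psi^1$ does not scatter \emph{backward} and excludes the case $s_n^1\to-\I$; and the pinched sup bound on $[t_n,T_{\max})$ is what yields both the reduction to a single profile with vanishing remainder (each non-scattering profile costs at least $E_2$ out of the budget $E_2+\frac2n$) and item (2). Your normalization --- placing $t=0$ where $\norm{|\d_x|^\sigma u_n(0)}_{\hat{M}^\beta_{2,\delta}}$ is close to $E_2$ --- carries none of this information, and deferring the ``double limsup'' issue to ``a standard diagonal argument once the one-profile reduction is in place'' inverts the logic: the one-profile reduction itself requires the pinched sup bound, so the careful choice of $t_n,t_n'$ must come first.

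Two of the mechanisms you invoke are moreover incorrect. First, ``if $u_2$ scattered forward, its norm near $T_{\max}$ would drop below $E_1$'' is false: the hat-Morrey norm depends only on $|\hat{f}|$ and is therefore invariant under the Airy flow, so for a forward-scattering solution $\norm{|\d_x|^\sigma u(t)}_{\hat{M}^\beta_{2,\delta}}$ converges to the norm of the scattering state, which can be arbitrarily large; scattering entails no norm decay. Forward non-scattering of the profile instead follows from the stability estimate (if every $\Psi^j$ scattered forward, the approximate solution built from the profiles would have finite $S$-norm on $[0,T_{\max})$, forcing $\norm{u_n}_{S([0,T_{\max}))}<\I$, a contradiction), and backward non-scattering uses $\norm{u_n}_{S([t_n,0])}\ge n$ as above. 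Second, ``precompactness of the orbit makes the norm essentially constant, whence item (2)'' also fails: precompactness modulo symmetries does not give (even approximate) constancy of the norm in time, and in the paper the implication runs the other way around --- item (2) is established first, directly from the $[E_2,E_2+\frac2n]$ bound transferred to $\Psi^1$ (together with time reversal for the $T_{\min}$ side), and it is precisely item (2) that makes the constant sequence $u_n:=\Psi^1$, with $t_n'=\tau_n$ an arbitrary sequence in $I_{\max}(\Psi^1)$, an admissible minimizing sequence; re-running the whole argument then gives $\Psi^1(\tau_n)=D(N_n)T(y_n)\phi+o_n(1)$ in $|\d_x|^{-\sigma}\hat{M}^\beta_{2,\delta}$, which is the sequential form of item (3).
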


\begin{remark}\label{rmk:E2Ec}
We give another characterization of $E_2$.
For $E\ge0$, we define
\[
	\mathcal{L}(E):= \sup\left\{ \norm{u}_{L^{\frac{5\alpha}2}_x(\R;L^{5\alpha}_t(I))}\left|
	\begin{aligned}
	&u(t)\in C(I;|\d_x|^{-\sigma}\hat{M}^\beta_{2,\delta}) \text{ is a solution}\\
	&\text{to \eqref{gKdV} on a compact interval }I \\
	&\text{such that } \max_{t\in I}\norm{|\d_x|^\sigma u(t)}_{\hat{M}^\beta_{2,\delta}} \le E
	\end{aligned}
	\right.\right\}.
\]
Remark that $\mathcal{L}:[0,\I)\to[0,\I]$ is non-decreasing.
Then, it holds that $E_2= \sup\{E\ |\ \mathcal{L}(E)<\I\}= \inf \{E\ |\ \mathcal{L}(E)=\I\}$.
\end{remark}

The following notation will be used throughout this 
paper: We use the notation 
$A\sim B$ to represent $C_{1}A\le B\le
C_{2} A$ for some constants $C_{1}$ and $C_{2}$.
We also use the notation $A\lesssim B$ to denote $A\le CB$ for
some constant $C$. The operator $|\pt_x|^s=(-\pt_x^2)^{s/2}$ denotes 
the Riesz potential of order $-s$. 
For $1\le p,q\le\infty$ and $I\subset\rre$, 
let us define a space-time norm
\begin{eqnarray*}
\|f\|_{L_t^qL_x^p(I)}&=&
\|\|f(t,\cdot)\|_{L_x^p(\rre)}\|_{L_t^q(I)},\\
\|f\|_{L_x^pL_t^q(I)}&=&
\|\|f(\cdot,x)\|_{L_t^q(I)}\|_{L_x^p(\rre)}.
\end{eqnarray*}

The rest of the article is organized as follows. 
In Section 2, we prove Theorems \ref{thm:S} and \ref{thm:T}. 
In Section 3, we shall show the well-posedness and the small data scattering 
for \eqref{gKdV} (Theorems \ref{thm:lwp} and \ref{thm:SDS}) by using 
the refined Strichartz estimate obtained by Theorem \ref{thm:S} 
and the contraction mapping principle. 
Finally in Section 4, we construct a minimal non-scattering solution 
to \eqref{gKdV} (Theorems \ref{thm:minimal} and \ref{thm:minimal2}) 
by using the concentration compactness. In Appendix, we summarize 
the embedding properties of the generalized Morrey space.

%%%%%%%%%%%%%%%%%%%%%%%%%%%%%%%%%%%%%%%%%%%%%%%%%%%%%%%%%%%%%%%%%%%%%%%%%
%
%  Proof of main results
%
%%%%%%%%%%%%%%%%%%%%%%%%%%%%%%%%%%%%%%%%%%%%%%%%%%%%%%%%%%%%%%%%%%%%%%%%%

\section{Proof of Strichartz estimates in the hat-Morrey space}
%\section{Proof of Theorems \ref{thm:S} and \ref{thm:T}}

%%%%%%%%%%%%%%%%%%%%%%%%%%%%%%%%%%%%%%%%%%%%%%%%%%%%
%
%  Section 2 Estimate
%
%%%%%%%%%%%%%%%%%%%%%%%%%%%%%%%%%%%%%%%%%%%%%%%%%%%%

In this section we derive the refinement 
version of the Strichartz estimates for solution to 
(\ref{A}) (Theorems \ref{thm:S} and \ref{thm:T}) 
by using the argument used in \cite{BV,KR,MS2,VV}. 
We show the inequality (\ref{qwe}) only, the proof of (\ref{qwe2}) being similar.

\subsection{Whitney decomposition} 

%The point to prove the inequality (\ref{qwe}) is a reduction  
%into a 
To show the inequality (\ref{qwe}), we first reduce the linear form into a \emph{bilinear} form:
\begin{eqnarray*}
\||\pt_{x}|^{s}e^{-t\pt_{x}^{3}}f\|_{L_{x}^{p}L_{t}^{q}}^{2}
=\|||\pt_{x}|^{s}e^{-t\pt_{x}^{3}}f|^{2}\|_{L_{x}^{\frac{p}{2}}L_{t}^{\frac{q}{2}}}.
\end{eqnarray*}
%Then, we evaluate this bilinear form by using the Whitney type decomposition. 
If $f$ is a real valued function, then 
\begin{eqnarray*}
(e^{-t\pt_{x}^{3}}f)(x)
=\sqrt{\frac{2}{\pi}}
Re\left[\int_{0}^{\infty}
e^{ix\xi+it\xi^{3}}\hat{f}(\xi)d\xi\right].
\end{eqnarray*}
Hence
\begin{eqnarray*}
\lefteqn{||\pt_{x}|^{s}e^{-t\pt_{x}^{3}}f|^{2}}\\
&=&
\frac{1}{\pi}Re
\int_{0}^{\infty}\int_{0}^{\infty}
e^{ix(\xi+\eta)+it(\xi^{3}+\eta^{3})}
|\xi\eta|^{s}
\hat{f}(\xi)\hat{f}(\eta)d\xi d\eta\\
& &
+\frac{1}{\pi}Re
\int_{0}^{\infty}\int_{0}^{\infty}
e^{ix(\xi-\eta)+it(\xi^{3}-\eta^{3})}|\xi\eta|^{s}
\hat{f}(\xi)\overline{\hat{f}(\eta)}d\xi d\eta.
\end{eqnarray*}
We now introduce a Whitney decomposition. 
Let ${{\mathcal D}}_{+}=\{[k2^{-j},(k+1)2^{-j})|j\in{{\mathbb Z}}, 
0\le k\in{{\mathbb Z}}\}$. 
For $\tau_{k}^{j}$, $\tau_{\ell}^{j}
\in{{\mathcal D}}_{+}$, we define a binary relation
\begin{eqnarray}\label{def:binary}
\tau_{k}^{j}\sim\tau_{\ell}^{j}
\ \Leftrightarrow\ 
\begin{cases}
\ell-k=-2,2,3\quad\ \ \text{if}\ k\ \text{is\ even},\\
\ell-k=-3,-2,2\quad\text{if}\ k\ \text{is\ odd}.
\end{cases}
\end{eqnarray}
Then, we have 
$(\rre_{+}\times\rre_{+})\backslash\{(\xi,\xi)|\xi\ge0\}=
\bigcup\{\tau_{k}^{j}\times\tau_{k}^{\ell}|\tau_{k}^{j}\in{{\mathcal D}}_{+}, \tau_{\ell}^{j}:\tau_{\ell}^{j}\sim\tau_{k}^{i}\}$.
%\begin{eqnarray*}
%(\rre_{+}\times\rre_{+})\backslash\{(\xi,\xi)|\xi\ge0\}=
%\mathop{\bigcup_{\tau_{k}^{j}\in{{\mathcal D}}_{+}}}_{\tau_{\ell}^{j}:\tau_{\ell}^{j}\sim\tau_{k}^{i}}\tau_{k}^{j}\times\tau_{k}^{\ell}.
%\end{eqnarray*}
The Whitney decomposition gives us 
\begin{eqnarray}
\label{mob}\\
\lefteqn{||\pt_{x}|^{s}e^{-t\pt_{x}^{3}}f|^{2}}\nonumber\\
%&=&
%\frac{1}{\pi}Re
%\int_{0}^{\infty}\int_{0}^{\infty}
%e^{ix(\xi+\eta)+it(\xi^{3}+\eta^{3})}
%|\xi\eta|^{s}
%\hat{f}(\xi)\hat{f}(\eta)d\xi d\eta\nonumber\\
%& &
%+\frac{1}{\pi}Re
%\int_{0}^{\infty}\int_{0}^{\infty}
%e^{ix(\xi-\eta)+it(\xi^{3}-\eta^{3})}|\xi\eta|^{s}
%\hat{f}(\xi)\overline{\hat{f}(\eta)}d\xi d\eta\nonumber\\
&=&
\frac{1}{\pi}
\sum_{\tau_{k}^{j}\in{{\mathcal D}}_{+}}
\sum_{\tau_{\ell}^{j}:\tau_{\ell}^{j}\sim\tau_{k}^{i}}
Re
\int_{\tau_{k}^{j}}\int_{\tau_{\ell}^{j}}
e^{ix(\xi+\eta)+it(\xi^{3}+\eta^{3})}
|\xi\eta|^{s}
\hat{f}(\xi)\hat{f}(\eta)d\xi d\eta\nonumber\\
& &+
\frac{1}{\pi}\sum_{\tau_{k}^{j}\in{{\mathcal D}}_{+}}
\sum_{\tau_{\ell}^{j}:\tau_{\ell}^{j}\sim\tau_{k}^{j}}
Re
\int_{\tau_{k}^{j}}\int_{\tau_{\ell}^{j}}
e^{ix(\xi-\eta)+it(\xi^{3}-\eta^{3})}
|\xi\eta|^{s}
\hat{f}(\xi)\overline{\hat{f}(\eta)}d\xi d\eta\nonumber\\
&=&
2Re\sum_{\tau_{k}^{j}\in{{\mathcal D}}_{+}}
\sum_{\tau_{\ell}^{j}:\tau_{\ell}^{j}\sim\tau_{k}^{j}}
|\pt_{x}|^{s}e^{-t\pt_{x}^{3}}f_{\tau_{k}^{j}}
|\pt_{x}|^{s}e^{-t\pt_{x}^{3}}f_{\tau_{\ell}^{j}}
\nonumber\\
& &+2Re\sum_{\tau_{k}^{j}\in{{\mathcal D}}_{+}}
\sum_{\tau_{\ell}^{j}:\tau_{\ell}^{j}\sim\tau_{k}^{j}}
|\pt_{x}|^{s}e^{-t\pt_{x}^{3}}f_{\tau_{k}^{j}}
\overline{|\pt_{x}|^{s}e^{-t\pt_{x}^{3}}f_{\tau_{\ell}^{j}}}
\nonumber\\
&=:&I_{1}+I_{2},\nonumber
\end{eqnarray}
where $\hat{f}_{I}(\xi)={{\bf 1}}_{I}(\xi)\hat{f}(\xi)$. 
A simple calculation leads 
\begin{eqnarray*}
\lefteqn{\supp{{\mathcal F}}_{t,x}
[|\pt_x|^se^{-t\pt_{x}^{3}}f_{\tau_{k}^{j}}
|\pt_x|^se^{-t\pt_{x}^{3}}f_{\tau_{\ell}^{j}}](\tau,\xi)}\\
&\subset&\{(\xi_{1}^{3}+\xi_{2}^{3},\xi_{1}+\xi_{2})|\xi_{1}\in \tau_{k}^{j},\xi_{2}\in 
\tau_{\ell}^{j}\}\\
&\subset&A_{j,k,\ell},
\end{eqnarray*}
where $A_{j,k,\ell}$ is given by 
\begin{eqnarray}
A_{j,k,\ell}=\left\{(\tau,\xi)\left|\ \frac{k+\ell}{2^{j}}\le\xi\le
\frac{k+\ell+2}{2^{j}},
\right.\tau\ \text{satisfies}\ (\ref{cc1})\right\}
\label{dd1}
\end{eqnarray}
with
\begin{eqnarray}
\ \ \ \left\{
\begin{aligned}
&\frac34\frac{(k-\ell-1)^{2}}{2^{2j}}\xi
\le \tau-\frac14\xi^{3}\le\frac34\frac{(k-\ell+1)^{2}}{2^{2j}}\xi
\\
&\qquad\qquad\qquad\qquad\qquad\qquad\qquad\qquad\text{if}\ \ell-k=-3,-2,\\
&\frac34\frac{(k-\ell+1)^{2}}{2^{2j}}\xi
\le \tau-\frac14\xi^{3}\le\frac34\frac{(k-\ell-1)^{2}}{2^{2j}}\xi\\
&\qquad\qquad\qquad\qquad\qquad\qquad\qquad\qquad\text{if}\ \ell-k=2,3.
\end{aligned}
\right.\label{cc1}
\end{eqnarray}

In a similar way, we see 
\begin{eqnarray*}
\lefteqn{\supp{{\mathcal F}}_{t,x}
[|\pt_x|^se^{-t\pt_{x}^{3}}f_{\tau_{k}^{j}}
\overline{|\pt_x|^se^{-t\pt_{x}^{3}}f_{\tau_{\ell}^{j}}}](\tau,\xi)}\\
&\subset&\{(\xi_{1}^{3}+\xi_{2}^{3},\xi_{1}+\xi_{2})|\xi_{1}\in \tau_{k}^{j},\xi_{2}\in 
\tau_{-\ell-1}^{j}\}\\
&\subset&B_{j,k,\ell},
\end{eqnarray*}
where $B_{j,k,\ell}$ is given by 
\begin{eqnarray}
\qquad B_{j,k,\ell}=\left\{(\tau,\xi)\ \left|\ \frac{k-\ell-1}{2^{j}}\le\xi\le
\frac{k-\ell+1}{2^{j}},
\right.\tau\ \text{satisfies}\ (\ref{cc2})\right\},
\label{dd2}
\end{eqnarray}
with
\begin{eqnarray}
\ \ \left\{
\begin{aligned}
&\frac34\frac{(k+\ell)^{2}}{2^{2j}}\xi
\le \tau-\frac14\xi^{3}\le\frac34\frac{(k+\ell+2)^{2}}{2^{2j}}\xi
\ \ \ \text{if}\ \ell-k=-3,-2,\\
&\frac34\frac{(k+\ell+2)^{2}}{2^{2j}}\xi
\le \tau-\frac14\xi^{3}\le\frac34\frac{(k+\ell)^{2}}{2^{2j}}\xi
\ \ \ \text{if}\ \ell-k=2,3.
\end{aligned}
\right.\label{cc2}
\end{eqnarray}

\subsection{Key estimates}
Let us introduce two preliminary estimates
associated with the set $A_{j,k,\ell}$ and $B_{j,k,\ell}$ given in the previous section.

For a closed domain $R\subset\rre^{2}$ and $\lambda>0$, we define
\begin{eqnarray*}
R_{+\lambda}=\{(\tau+\tau',\xi)|(\tau,\xi)\in R,\ -\lambda\le\tau'
\le\lambda\}.
\end{eqnarray*}
The set $R_{+\lambda}$ is an enlargement of $R$ in $\tau$-direction.
Let $\varphi\in C_0^\I(\R)$ be a nonnegative function such that
$\supp\varphi\subset[-1,1]$ and $\int_{-1}^{1}\varphi(x) dx=1$.
Define a cut-off function
\[
	\psi_{R,{\lambda}}(\tau,\xi) := \left[
\frac2{\lambda}\varphi\left(\frac{2 }{\lambda} (\cdot)  \right)
\ast_{\tau}{\bf 1}_{R_{+\frac{\lambda}2}}(\cdot,\xi)\right](\tau),
\]
where ${\bf 1}_{\Omega}(\tau,\xi)$ 
is a characteristic function supported on $\Omega\subset\rre^{2}$. 
Note that $\psi_{R,{\lambda}}$ is smooth function with respect to 
$\tau$ variable. Furthermore, $\psi_{R,{\lambda}}$ 
satisfies $0 \le \psi_{R,{\lambda}}\le 1$,
$\psi_{R,{\lambda}} \equiv 1$ on $R$, and $\supp \psi_{R,{\lambda}} \subset R_{+{\lambda}}$.
We define a Fourier multiplier $P_{R,{\lambda}}$ by
\begin{eqnarray}
	(P_{R,{\lambda}}f)(t,x) &:=& \mathcal{F}^{-1}_{\tau,\xi}[\psi_{R,{\lambda}} 
	\mathcal{F}_{t,x} f](t,x)\label{def:PR}\\
	&=&\(\F^{-1}_{\tau}[\varphi]\(\frac{\lambda}{2}t\)\F^{-1}_{\tau,\xi}[{\bf 1}_{R_{+\frac{\lambda}{2}}}]
	 \ast f\)(t,x)\nonumber.
\end{eqnarray}
Let $\Lambda=\{(j,k,\ell)\in{{\mathbb Z}}\times{{\mathbb Z}}_{\ge0}
\times{{\mathbb Z}}_{\ge0}|\ell-k=-3,-2,2,3\}$. 
For $(j,k,\ell)\in\Lambda$, 
we let two families of sets $\{A_{j,k,\ell}\}$ and 
$\{B_{j,k,\ell}\}$ be as in (\ref{dd1}) and (\ref{dd2}), respectively.
We further introduce
\begin{equation}\label{tAB}
	\widetilde{A}_{j,k,\ell} = (A_{j,k,\ell})_{+\frac{k}{100\times2^{3j}}},\quad
	\widetilde{B}_{j,k,\ell} = (B_{j,k,\ell})_{+\frac{k}{100\times2^{3j}}}.
\end{equation}
As we explained in Introduction, 
the following finite doubling properties of the two families 
$\{\tilde{A}_{j,k,\ell}\}$ and 
$\{\tilde{B}_{j,k,\ell}\}$ play an important role 
in the proof of  Theorems \ref{thm:S} and \ref{thm:T}.

\begin{proposition}[Almost orthogonality]\label{orthgnal}
Let $X=A$ or $B$. Then the inequality 
\begin{eqnarray}
	\sum_{(j,k,\ell)\in\Lambda} 
	{\bf 1}_{\mathop{\widetilde{X}_{j,k,\ell}}} (\tau,\xi) \le 12
	\label{orthg}
\end{eqnarray}
holds for almost all $(\tau,\xi)\in\rre^{2}$, 
where $\Lambda=\{(j,k,\ell)\in{{\mathbb Z}}\times{{\mathbb Z}}_{\ge0}
\times{{\mathbb Z}}_{\ge0}|\ell-k=-3,-2,2,3\}$. 
\end{proposition}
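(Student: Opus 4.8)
The plan is to prove the pointwise bound on overlaps by carefully analyzing, for a \emph{fixed} point $(\tau,\xi)\in\R^2$, how many triples $(j,k,\ell)\in\Lambda$ can have $(\tau,\xi)\in\widetilde{X}_{j,k,\ell}$. Since for each admissible scale $j$ the index $\ell$ is determined by $k$ up to the four choices $\ell-k\in\{-3,-2,2,3\}$, it suffices to show that (a) only finitely many values of $j$ contribute, in fact the contributing $j$'s are confined to an interval of bounded length, and (b) for each such $j$ there is only a bounded number of admissible $k$ (here essentially one, up to the small spread coming from the margin). Multiplying these bounds by the factor $4$ coming from the choices of $\ell-k$ should give a constant, and the claim is that the constant can be taken to be $12$.

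First I would record the geometry of $A_{j,k,\ell}$ (the $B$ case being analogous): from \eqref{dd1}--\eqref{cc1}, membership of $(\tau,\xi)$ forces $\xi\in[(k+\ell)2^{-j},(k+\ell+2)2^{-j}]$, so the \emph{horizontal slab} pins down $(k+\ell)2^{-j}$ to within an additive $2\cdot 2^{-j}$; combined with $\ell-k\in\{\pm2,\pm3\}$ this pins down both $k2^{-j}$ and $\ell 2^{-j}$ up to $O(2^{-j})$. Next, \eqref{cc1} says that $\tau-\tfrac14\xi^3$ lies in a band of the form $\big[\tfrac34 (k-\ell\mp1)^2 2^{-2j}\xi,\ \tfrac34(k-\ell\pm1)^2 2^{-2j}\xi\big]$, i.e.\ the "height above the cubic curve" $\tau-\tfrac14\xi^3$ is, up to the $O(2^{-j}\xi)$ bandwidth and the margin, comparable to $2^{-2j}\xi$. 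So the two coordinates $\xi$ and $\tau-\tfrac14\xi^3$ essentially determine $2^{-j}$ (from $(\tau-\tfrac14\xi^3)/\xi\sim 2^{-2j}$) and then $k$ (from $\xi\sim k2^{-j}$); the role of the enlargement $\widetilde{A}_{j,k,\ell}=(A_{j,k,\ell})_{+k/(100\cdot2^{3j})}$ is to widen the $\tau$-band by $2k/(100\cdot 2^{3j})\approx \xi/(50\cdot 2^{2j})$, which is a \emph{small fraction} of the intrinsic band height $\sim 2^{-2j}\xi$ — this is exactly why the margin must be added in $\tau$ only and why it does not destroy the finiteness. I would quantify this: show that if $(\tau,\xi)\in\widetilde{A}_{j,k,\ell}$ then $2^{-2j}$ is confined to a fixed multiplicative range (say a factor of $4$ or $8$) determined by $(\tau-\tfrac14\xi^3)/\xi$, bounding the number of admissible $j$, and then for each such $j$ bound the number of admissible $k$ by checking that the horizontal slabs $[(k+\ell)2^{-j},(k+\ell+2)2^{-j}]$ together with the margin-thickened $\tau$-bands can overlap a given point for at most a couple of consecutive $k$.

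The main obstacle I expect is bookkeeping the constants tightly enough to land on $12$ rather than merely "a finite constant": one has to handle the two sign cases in \eqref{cc1}/\eqref{cc2} and the even/odd distinction in \eqref{def:binary}, track how the width-$2\cdot2^{-j}$ horizontal slab and the margin interact with the quadratic dependence on $k-\ell$, and check the edge behavior near $\xi$ small or $k$ small (where $k\in\Z_{\ge0}$ and the margin $k/(100\cdot2^{3j})$ degenerates). I would first prove the qualitative statement — the overlap is bounded by \emph{some} absolute constant — by the scale/position counting above, treating $\widetilde A$ and $\widetilde B$ uniformly by noting that in both cases the defining data is "one horizontal slab of width $2\cdot2^{-j}$ plus one $\tau$-band whose height is $\sim 2^{-2j}\xi$ and whose margin is a small fraction thereof". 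Then I would go back and optimize: for fixed $(\tau,\xi)$, at most (some small number, aiming at $3$) values of $2^{-j}$ survive, for each at most one $k$ (possibly two at slab boundaries), and $4$ choices of $\ell-k$, giving $\le 12$. If the sharp count proves delicate, an acceptable fallback is to enlarge the relation in \eqref{def:binary} negligibly or to accept a slightly larger constant, since for the application in \eqref{qwe} only finiteness of the overlap constant is used.
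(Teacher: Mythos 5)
Your treatment of the $A$-family is exactly the paper's argument: for fixed $m=\ell-k\in\{-3,-2,2,3\}$ the $\xi$-slabs $[(2k+m)2^{-j},(2k+m+2)2^{-j}]$ pin down $k$ for each $j$, the ratio $(\tau-\tfrac14\xi^3)/\xi\sim 2^{-2j}$ confines $j$ to a bounded range, the $\tau$-margin $k/(100\cdot 2^{3j})\sim \xi\,2^{-2j}/200$ is a small fraction of the band width $\sim m\,2^{-2j}\xi$, and one multiplies by $4$ for the choices of $m$ (the paper gets $3$ admissible $(j,k)$ per $m$, hence $12$).

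The genuine gap is your claim that $\widetilde B_{j,k,\ell}$ can be treated ``uniformly'' as ``one horizontal slab of width $2\cdot 2^{-j}$ plus one $\tau$-band whose height is $\sim 2^{-2j}\xi$,'' with $j$ recovered from $(\tau-\tfrac14\xi^3)/\xi$ and $k$ from $\xi\sim k2^{-j}$. Neither relation holds for the $B$-family: by \eqref{dd2}--\eqref{cc2} the $\xi$-slab of $B_{j,k,\ell}$ is $[(k-\ell-1)2^{-j},(k-\ell+1)2^{-j}]$, i.e.\ $|\xi|\sim|m|\,2^{-j}$ \emph{independently of $k$} (these are the frequency-difference pieces), and the $\tau$-band sits at height $\tau-\tfrac14\xi^3\sim\tfrac34(k+\ell)^2 2^{-2j}\xi$ with width $\sim k\,2^{-2j}|\xi|$, so $(\tau-\tfrac14\xi^3)/\xi\sim k^2 2^{-2j}$ with $k$ unbounded. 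Consequently your recipe fails for $B$: the band ratio does not confine $j$ (any $j$ can be compensated by a suitable $k$), and the slab cannot separate different $k$ at all. The correct counting — which is what the paper does — is the symmetric one: for $B$ the slab $|\xi|\sim|m|2^{-j}$ pins down $j$ to a bounded range, and then, for fixed $j$ and $m$, the quadratically spaced bands $\tfrac34(2k+m)^2 2^{-2j}\xi\lessgtr\tau-\tfrac14\xi^3\lessgtr\tfrac34(2k+m+2)^2 2^{-2j}\xi$ pin down $k$ up to a bounded ambiguity; the margin $k/(100\cdot2^{3j})$ is still harmless, but for a different reason than you state, namely because it is a small fraction of the band width $\sim k\,2^{-3j}$ (not of $2^{-2j}\xi$). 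This is a fixable flaw, but as written the plan does not close the $X=B$ half of \eqref{orthg}; your fallback remark (any finite overlap constant suffices for the application) is fine, yet it too requires the corrected $B$-count, not an enlargement of \eqref{def:binary}.
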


\begin{proof}[Proof of Proposition \ref{orthgnal}]
We first note that 
$\Lambda=\{(j,k,k+m)\in{{\mathbb Z}}\times{{\mathbb Z}}_{\ge0}
\times{{\mathbb Z}}_{\ge0}|m=-3,-2,2,3\}$. 
By (\ref{dd1}), we see 
\begin{eqnarray*}
\widetilde{A}_{j,k,k+m}
\subset
\left\{(\tau,\xi)\ \left|\ \frac{2k+m}{2^{j}}\le\xi\le
\frac{2(k+1)+m}{2^{j}},
\right.\tau\ \text{satisfies}\ (\ref{cc11})\right\},
\end{eqnarray*}
where 
\begin{eqnarray}
\qquad
\left\{
\begin{aligned}
&\frac{3}{16}\frac{(m+1)^{2}}{2^{2j}}\xi
\le \tau-\frac14\xi^{3}\le3\frac{(m-1)^{2}}{2^{2j}}\xi\quad
&&\text{if}\ m=-3,-2,\\
&\frac{3}{16}\frac{(m-1)^{2}}{2^{2j}}\xi
\le \tau-\frac14\xi^{3}\le3\frac{(m+1)^{2}}{2^{2j}}\xi
&&\text{if}\ m=2,3.\\
\end{aligned}
\right.\label{cc11}
\end{eqnarray}
Hence for any $j\in{{\mathbb Z}}$ and $m=-3,-2,2,3$, we have
\begin{eqnarray*}
\sum_{k\ge\max\{0,-m\}}{\bf 1}_{\widetilde{A}_{j,k,k+m}}(\tau,\xi)
\le{\bf 1}_{C_{j,m}}(\tau,\xi) \qquad a.e.,
\end{eqnarray*}
where %$C_{j,m}$ are given by 
$C_{j,m}=\left\{(\tau,\xi)\left|\xi\ge0,
\right.\tau\ \text{satisfies}\ (\ref{cc11})\right\}$.
Therefore
\begin{eqnarray}
\sum_{j\in{{\mathbb Z}}}\sum_{k\ge\max\{0,-m\}}{\bf 1}_{\widetilde{A}_{j,k,k+m}} (\tau,\xi)\le3\qquad a.e.
\label{tz1}
\end{eqnarray}
for each $m=-3,-2,2,3$. Hence we obtain (\ref{orthg}) for $X=A$. 

By (\ref{dd2}), we find
\begin{eqnarray*}
\widetilde{B}_{j,k,k+m}
\subset\left\{(\tau,\xi)\ \left|\ \frac{-m-1}{2^{j}}\le\xi\le
\frac{-m+1}{2^{j}},
\right.\tau\ \text{satisfies}\ (\ref{cc21})\right\},
\end{eqnarray*}
where 
\begin{eqnarray}
\left\{
\begin{aligned}
&\frac{3}{16}\frac{(2k+m)^{2}}{2^{2j}}\xi
\le \tau-\frac14\xi^{3}\le3\frac{(2(k+1)+m)^{2}}{2^{2j}}\xi
\\
&\qquad\qquad\qquad\qquad\qquad\qquad\qquad\qquad\text{if}\ m=-3,-2,\\
&\frac{3}{16}\frac{(2(k+1)+m)^{2}}{2^{2j}}\xi
\le \tau-\frac14\xi^{3}\le3\frac{(2k+m)^{2}}{2^{2j}}\xi
\\
&\qquad\qquad\qquad\qquad\qquad\qquad\qquad\qquad\text{if}\ m=2,3.
\end{aligned}
\right.\label{cc21}
\end{eqnarray}
Therefore, for any $j\in{{\mathbb Z}}$ and $m=-3,-2,2,3$ we have
\begin{eqnarray*}
\sum_{k\ge\max\{0,m\}}{\bf 1}_{\widetilde{B}_{j,k,k+m}} (\tau,\xi) \le 3 {\bf 1}_{D_{j,m}} (\tau,\xi)
\qquad a.e.,
\end{eqnarray*}
where $D_{j,m}$ are given by 
\begin{eqnarray*}
D_{j,m}
&=&\left\{(\tau,\xi)\in\rre^{2}\ \left|\ \frac{-m-1}{2^{j}}\le\xi\le
\frac{-m+1}{2^{j}}\right.\right\}.
\nonumber
\end{eqnarray*}
This implies
\begin{eqnarray}
\sum_{j\in{{\mathbb Z}}}\sum_{k\ge\max\{0,-m\}}{\bf 1}_{\widetilde{B}_{j,k,k+m}} (\tau,\xi) \le3
\qquad a.e.
\label{tz2}
\end{eqnarray}
for any $m=-3,-2,2,3$. Then we obtain (\ref{orthg}) for $X=B$. 
This completes the proof of Proposition \ref{orthgnal}.
\end{proof}

\begin{remark}\label{rem:si}
For closed domain $R\subset\rre^{2}$ and $\lambda>0$, we define
\begin{eqnarray*}
R'_{+\lambda}=\{(\tau+\tau',\xi+\xi')|(\tau,\xi)\in R,\ -\lambda\le\tau'
\le\lambda,-\lambda\le\xi'\le\lambda\},
\end{eqnarray*}
which is an enlargement both in $\tau$- and $\xi$-directions.
Further, we define $\tilde{A'}_{j,k,\ell}$  $\tilde{B'}_{j,k,\ell}$ by 
\begin{equation*}%\label{tAB}
	\widetilde{A'}_{j,k,\ell} = (A_{j,k,\ell})'_{+\frac{k}{100\times2^{3j}}},\quad
	\widetilde{B'}_{j,k,\ell} = (B_{j,k,\ell})'_{+\frac{k}{100\times2^{3j}}}.
\end{equation*}
If we are able to show the almost orthogonality properties of the two families 
$\{\tilde{A'}_{j,k,\ell}\}$ and 
$\{\tilde{B'}_{j,k,\ell}\}$, then we will be able to 
obtain Theorems \ref{thm:S} and \ref{thm:T} with $\sigma=0$
by means of \cite[Lemma 6.1]{TVV}, in essentially the same spirit as in \cite{BV}.
\end{remark}

Next we show the bounds for the Fourier multipliers $P_{\tilde{A}_{j,k,\ell}}$ 
and $P_{\tilde{B}_{j,k,\ell}}$ defined by  
\begin{eqnarray*}
P_{\tilde{X}_{j,k,\ell}}:=P_{X_{j,k,\ell},\frac{k}{100\times2^{3j}}}\qquad\text{for}\ X=A,B, 
\end{eqnarray*}
where $P_{X_{j,k,\ell},\frac{k}{100\times2^{3j}}}$ is given by (\ref{def:PR}). 
Since $P$ is a frequency cutoff which is smooth only in $\tau$-direction, we are not free from
a small \emph{loss} in the exponent of $x$.

\begin{proposition}[Bounds for multiplier]\label{boundness} 
Let $X=A$ or $B$. 

\vskip1mm
\noindent
(i) Let $\si>0$, $1/(1-\si)\le p\le\infty$ and  
$1\le q\le\infty$. 
Let $p_{\si}$ be given by  
\begin{eqnarray}
\frac1{p_{\si}} = \frac1p+\si.\label{ppt}
\end{eqnarray}
Then, there exists a positive constant $C$ depending only on $p,q$ 
such that for any $(j,k,\ell)\in\Lambda$, the inequality 
\begin{eqnarray}
	\|P_{\tilde{X}_{j,k,\ell}}F\|_{L_{x}^{p}L_{t}^{q}}
	\le C2^{-j\si}\|F\|_{L_{x}^{p_{\si}}L_{t}^{q}}
	\label{bound}
\end{eqnarray}
holds for any $F\in L_{x}^{p_{\si}}L_{t}^{q}$.

\vskip1mm
\noindent
(ii) Let $\si>0$, $1\le p\le\infty$ and  
$1/(1-\si)\le q\le\infty$. 
Let $q_{\si}$ be given by  
\begin{eqnarray*}
\frac1{q_{\si}} = \frac1q+\si.
\end{eqnarray*}
Then, there exists a positive constant $C$ depending only on $p,q$ 
such that for any $(j,k,\ell)\in\Lambda$, the inequality 
\begin{eqnarray}
	\|P_{\tilde{X}_{j,k,\ell}}F\|_{L_{t}^{p}L_{x}^{q}}
	\le C2^{-j\si}\|F\|_{L_{t}^{p}L_{x}^{q_{\si}}}
	\label{bound3}
\end{eqnarray}
holds for any $F\in L_{t}^{p}L_{x}^{q_{\si}}$.
\end{proposition}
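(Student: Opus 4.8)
The plan is to prove the two parts in parallel, since they differ only in the roles of $t$ and $x$; I will concentrate on part (i). The starting point is the convolution representation already written out in \eqref{def:PR}:
\[
	(P_{\tilde{X}_{j,k,\ell}}F)(t,x) = \Bigl( \F^{-1}_{\tau}[\varphi]\bigl(\tfrac{\lambda}{2}\,\cdot\bigr) \otimes \F^{-1}_{\tau,\xi}[{\bf 1}_{(X_{j,k,\ell})_{+\lambda/4}}] \ast F\Bigr)(t,x),
\]
with $\lambda = k/(100\times 2^{3j})$. Here the multiplier is a genuine smooth cutoff in $\tau$ but only a sharp characteristic function in $\xi$, which is the source of the announced $\sigma$-loss. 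I would write $P_{\tilde X_{j,k,\ell}} = M_t \circ N$, where $M_t$ is the convolution in $t$ against the rescaled bump $\F^{-1}_\tau[\varphi](\tfrac\lambda2 t)$ (an $L^1_t$ kernel of mass $\sim 1$, hence bounded on $L^q_t$ uniformly in $j,k,\ell$ by Young's inequality), and $N$ is the multiplier operator with symbol ${\bf 1}_{(X_{j,k,\ell})_{+\lambda/4}}(\tau,\xi)$. It then suffices to bound $N\colon L^{p_\si}_x L^q_t \to L^p_x L^q_t$ with norm $\lesssim 2^{-j\sigma}$.

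**Key steps.** First, I would record the geometry of the set $(X_{j,k,\ell})_{+\lambda/4}$: from \eqref{dd1}–\eqref{cc1} (resp.\ \eqref{dd2}–\eqref{cc2}) its $\xi$-projection is an interval of length $\lesssim 2^{-j}$, and at each fixed $\xi$ the $\tau$-fiber is an interval of length $\lesssim 2^{-3j}\xi \cdot (\text{const}) + \lambda \lesssim k\,2^{-3j} \sim \lambda$ on that $\xi$-range — so the set is, up to the cubic shear $\tau \mapsto \tau - \tfrac14\xi^3$, essentially a rectangle of dimensions $2^{-j} \times \lambda$ in $(\xi,\tau)$. The shear $\tau \mapsto \tau-\tfrac14\xi^3$ is a measure-preserving change of variables that commutes with $t$-translations and with the $x$-dependent modulation $e^{it\xi^3/4}$ acting fiberwise in $\xi$; since such modulations are $L^p_x L^q_t$-isometries (they only multiply by a unimodular factor depending on $x$ through nothing — more carefully, conjugating by $e^{-it\pt_x^3/4}$), I may assume the set is literally a rectangle $J_\xi \times J_\tau$ with $|J_\xi|\sim 2^{-j}$. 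Then $N$ factors as (multiplier by ${\bf 1}_{J_\tau}(\tau)$ in $t$) $\circ$ (multiplier by ${\bf 1}_{J_\xi}(\xi)$ in $x$); the first is bounded on $L^q_t$ uniformly (it is a Hilbert-transform-type operator, bounded for $1<q<\infty$, and the endpoints are handled by the fact that $\tau\mapsto\varphi$-smoothing already sits in front, so actually we may keep the smooth version and use Young again). The second, multiplication by the characteristic function of an interval of length $2^{-j}$ in Fourier, maps $L^{p_\si}_x\to L^p_x$ with norm $\lesssim (2^{-j})^{1/p_\si - 1/p} = 2^{-j\sigma}$ by Bernstein's inequality (since $1/p_\si - 1/p = \sigma \ge 0$ and $p \ge p_\si$); the hypothesis $1/(1-\sigma)\le p$ is exactly what guarantees $p_\si \ge 1$ so that Bernstein applies. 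Combining the three uniform-or-gaining bounds gives \eqref{bound}; \eqref{bound3} is identical with $t\leftrightarrow x$, using $1/(1-\sigma)\le q$ for the Bernstein step in the $x$-variable now being in $L^q_x$.

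**Main obstacle.** The delicate point is legitimizing the reduction "the shear and the modulation are harmless," because the modulation $e^{it\xi^3/4}$ that straightens the fibers of $X_{j,k,\ell}$ acts on the $\xi$-Fourier side, i.e.\ as the Airy-type operator $e^{-t\pt_x^3/4}$ in $x$, which is emphatically \emph{not} bounded on $L^p_x$ for $p\neq 2$, so one cannot simply conjugate it away. The honest route is instead to keep the operator in its convolution form \eqref{def:PR} and estimate the kernel directly: one must show that $\F^{-1}_{\tau,\xi}[{\bf 1}_{(X_{j,k,\ell})_{+\lambda/4}}](t,x)$, integrated in $t$, has $L^1_x$-mass comparable to $2^{-j}$-to-a-power, using the explicit description \eqref{cc1}/\eqref{cc2} of the fibers and the stationary/nonstationary phase behavior of $\int_{J_\xi} e^{ix\xi} \bigl(\int_{\tau\text{-fiber}} e^{it\tau}d\tau\bigr) d\xi$ — the key gain being that $|J_\xi| \sim 2^{-j}$ forces a kernel spread in $x$ of width $\gtrsim 2^j$, and Young's inequality in $x$ then converts $L^{p_\si}_x\to L^p_x$ into a kernel bound in $L^{(p_\si/p)'\!,1}_x$, producing the factor $2^{-j\sigma}$. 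So the plan is: keep the smooth $\tau$-cutoff, apply Young in $t$ to dispatch it, and for the surviving sharp $\xi$-cutoff (an interval of length $2^{-j}$) invoke Bernstein in $x$ — that is genuinely just multiplication by ${\bf 1}_{J_\xi}$ on the Fourier side and needs no modulation — while the cubic shear only affects the $\tau$-variable at fixed $\xi$ and is absorbed into the already-applied $t$-convolution estimate. I expect writing the $\tau$-fiber length bound uniformly over the $\xi$-range (so that the smooth cutoff's kernel truly has $L^1_t$-mass $O(1)$) to be the one spot requiring the explicit constants from \eqref{cc1}–\eqref{cc2}.
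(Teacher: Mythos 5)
Your final plan rests on a decoupling that is not available. The symbol $\psi_{\tilde X_{j,k,\ell}}$ is not a product of a function of $\tau$ and a function of $\xi$: by \eqref{dd1}--\eqref{cc2}, the $\tau$-fiber of $\tilde X_{j,k,\ell}$ over $\xi$ is an interval of the form $\tfrac14\xi^3+[c\xi-\lambda,\,d\xi+\lambda]$, i.e.\ it moves with $\xi$ along the cubic. Consequently $P_{\tilde X_{j,k,\ell}}$ does not factor as $M_t\circ N$ (a composition of operators corresponds to a \emph{product} of symbols, whereas \eqref{def:PR} defines $\psi_{R,\lambda}$ as a \emph{convolution} in $\tau$), and, more importantly, the ``surviving sharp $\xi$-cutoff'' that you want to treat by Bernstein does not exist: once the $\xi$-dependence of the $\tau$-fibers is retained, the operator is a genuine two-dimensional multiplier whose kernel carries the Airy-type oscillation $e^{ix\xi+\frac{i}{4}t\xi^3}$. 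Your way of dismissing this --- ``the cubic shear only affects the $\tau$-variable at fixed $\xi$ and is absorbed into the already-applied $t$-convolution estimate'' --- is exactly the conjugation by $e^{-t\partial_x^3/4}$ that you yourself flag as illegitimate on $L^p_x$ for $p\neq2$: a $\xi$-fiberwise $t$-multiplier can be handled by Plancherel in $x$ (hence for $p=2$), but not in mixed norms $L^p_xL^q_t$, where only the physical-space kernel helps. (The sharp-cutoff/Bernstein step would also fail at the endpoints $p_\sigma=1$ and $q\in\{1,\I\}$, but that is secondary.)

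The ``honest route'' you mention in your obstacle paragraph is indeed how the proof goes, but you never execute it, and your bookkeeping would not close as stated. One reduces to a sheared box $R=\{a\le\xi\le b,\ c\xi\le\tau-\tfrac14\xi^3\le d\xi\}$ with $b-a\le 2\cdot2^{-j}$ and $(b+a)(d-c)\lesssim k2^{-3j}$, and bounds the full kernel $\F^{-1}_{\tau,\xi}[{\bf 1}_{R_{+\lambda/2}}](t,x)$ by $C\,k2^{-3j}\min\bigl(2^{-j},\,|x+(\tfrac34e^2+f)t|^{-1}\bigr)$ (trivial size bound plus one integration by parts in $\xi$), whence $\sup_t\|\F^{-1}[{\bf 1}_{R_{+\lambda/2}}](t,\cdot)\|_{L^{r_\sigma}_x}\le C\,k2^{-3j}\,2^{-j\sigma}$ with $1/r_\sigma=1-\sigma$; then mixed-norm Young plus Minkowski give \eqref{bound} \emph{because} the factor $k2^{-3j}=\lambda$ is cancelled by $\|\F^{-1}[\varphi](\lambda t/2)\|_{L^1_t}\sim\lambda^{-1}$. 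In your write-up this cancellation is invisible: you assert that the $t$-factor has ``$L^1_t$ mass $\sim1$,'' so even after a correct kernel estimate you would be left with an uncontrolled factor $\lambda=k2^{-3j}$, which is unbounded over $(j,k,\ell)\in\Lambda$. The missing content is thus precisely the two-dimensional kernel estimate together with this $\lambda$-accounting; the Bernstein/decoupling shortcut cannot replace it.
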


\begin{proof}[Proof of Proposition \ref{boundness}] 
We show the inequality (\ref{bound}) only since the proof of 
(\ref{bound3}) is similar. 
Consider a set of the form
\begin{equation*}
R:=\left\{(\tau,\xi)\left|a\le\xi\le b,c\xi\le\tau-\frac14\xi^{3}\le 
d\xi\right.\right\}
\end{equation*}
with constants $a,b,c,d\in\rre$.
To prove (\ref{bound}), it suffices to show the following assertion: 
For $a,b,c,d\in\rre$ satisfying the relations 
\begin{equation*}
(b+a)(d-c)\le200\frac{k}{2^{3j}},\qquad b-a\le\frac{2}{2^{j}}
\end{equation*}
and for $\lambda=k/2^{3j}$, it holds that
\begin{eqnarray}
	\|P_{R,\lambda}F\|_{L_{x}^{p}L_{t}^{q}}
	\le C2^{-j\si}\|F\|_{L_{x}^{p_{\si}}L_{t}^{q}}.
	\label{bound2}
\end{eqnarray}
To prove (\ref{bound2}), 
we first evaluate the inverse Fourier transform of the characteristic 
function ${\bf 1}_{{R}_{+\frac{\lambda}{2}}}$. A direct calculation shows 
\begin{eqnarray*}
{{\mathcal F}}^{-1}[{\bf 1}_{{R}_{+\frac{\lambda}{2}}}](t,x)
&=&
\int_{a}^{b}\left(\int_{\frac14\xi^{3}+c\xi-\frac{\lambda}{2}
}^{\frac14\xi^{3}+d\xi+\frac{\lambda}{2}}e^{ix\xi+it\tau}d\tau\right)d\xi\\
&=&
\int_{a}^{b}e^{i(x+ft)\xi+\frac14it\xi^{3}}\left(\int_{-\frac{d-c}{2}\xi-\frac{\lambda}{2}
}^{\frac{d-c}{2}\xi+\frac{\lambda}{2}}e^{it\tau}d\tau\right)d\xi,
\end{eqnarray*}
where $f=(d+c)/2$. We easily see 
\begin{eqnarray}
\lefteqn{|{{\mathcal F}}^{-1}[{\bf 1}_{{R}_{+\frac{\lambda}{2}}}](t,x)|
\le \int_{a}^{b}\left(\int_{-\frac{d-c}{2}\xi-\frac{\lambda}{2}
}^{\frac{d-c}{2}\xi+\frac{\lambda}{2}}d\tau\right)d\xi}\label{q1}\\
&\le&\frac12(b^{2}-a^{2})(d-c)+\lambda(b-a)
\le C\frac{k}{2^{4j}}.\nonumber
\end{eqnarray}
On the other hand, we evaluate ${{\mathcal F}}^{-1}[
{\bf 1}_{{R}_{+\frac{\lambda}{2}}}]$ 
by using the method of stationary phase. We 
rewrite ${{\mathcal F}}^{-1}[{\bf 1}_{{R}_{+\frac{\lambda}{2}}}]$ 
as 
\begin{eqnarray*}
\lefteqn{{{\mathcal F}}^{-1}[{\bf 1}_{{R}_{+\frac{\lambda}{2}}}](t,x)}\\
&=&
\int_{a}^{b}e^{i\{x+(\frac34e^{2}+f)t\}\xi}
e^{\frac14it(\xi^{3}-3e^{2}\xi)}\left(\int_{-\frac{d-c}{2}\xi-\frac{\lambda}{2}
}^{\frac{d-c}{2}\xi+\frac{\lambda}{2}}e^{it\tau}d\tau\right)d\xi,
\end{eqnarray*}
where $e=(a+b)/2$. Using the identity 
\begin{eqnarray*}
e^{i\{x+(\frac34e^{2}+f)t\}\xi}=\frac{\pt_{\xi}e^{i\{x+(\frac34e^{2}+f)t\}\xi}
}{i\{x+(\frac34e^{2}+f)t\}}
\end{eqnarray*}
and integrating by parts, we obtain
\begin{eqnarray*}
\lefteqn{{{\mathcal F}}^{-1}[{\bf 1}_{{R}_{+\frac{\lambda}{2}}}](t,x)}\\
&=&
\frac{1}{i\{x+(\frac34e^{2}+f)t\}}
\left[e^{ix\xi+\frac14it\xi^{3}}\left(\int_{-\frac{d-c}{2}\xi-\frac{\lambda}{2}
}^{\frac{d-c}{2}\xi+\frac{\lambda}{2}}e^{it\tau}d\tau\right)
\right]_{a}^{b}\\
& &-
\frac{3t}{4\{x+(\frac34e^{2}+f)t\}}
\int_{a}^{b}(\xi^{2}-e^{2})e^{ix\xi+\frac14it\xi^{3}}\left(\int_{-\frac{d-c}{2}\xi-\frac{\lambda}{2}
}^{\frac{d-c}{2}\xi+\frac{\lambda}{2}}e^{it\tau}d\tau\right)d\xi\\
& &-
\frac{d-c}{2i\{x+(\frac34e^{2}+f)t\}}
\int_{a}^{b}e^{ix\xi+\frac14it\xi^{3}}
(e^{it(\frac{d-c}{2}\xi+\frac{\lambda}{2})}+e^{it(-\frac{d-c}{2}\xi-\frac{\lambda}{2})})d\xi.
\end{eqnarray*}
Hence
\begin{eqnarray}
|{{\mathcal F}}^{-1}[{\bf 1}_{{R}_{+\frac{\lambda}{2}}}](t,x)|
&\le&C\frac{(b+a)(d-c)+(b-a)^{3}+\lambda}{|x+(\frac34e^{2}+f)t|}
\label{q2}\\
&\le&\frac{k}{2^{3j}}\frac{C}{|x+(\frac34e^{2}+f)t|}.
\nonumber
\end{eqnarray}
%where $C_{j,k}=(\frac34(2k-2)^{2}+9)2^{-2j}$???. 
By (\ref{q1}) and (\ref{q2}), we have
\begin{eqnarray*}
|{{\mathcal F}}^{-1}[{\bf 1}_{{R}_{+\frac{\lambda}{2}}}](t,x)|
%\le C\frac{k}{2^{3j}}\min\left\{\frac{1}{2^{j}},\frac{1}{|x+(\frac34e^{2}+f)t|}\right\}
\le C\frac{k}{2^{3j}}\times
\left\{
\begin{aligned}
&\frac{1}{2^{j}}
&&\text{if}\ |x+(3e^{2}/4+f)t|\le 2^{j},\\
&\frac{1}{|x+(\frac34e^{2}+f)t|}
&&\text{if}\ |x+(3e^{2}/4+f)t|\ge 2^{j}.
\end{aligned}
\right.
\end{eqnarray*}
Therefore we find 
\begin{eqnarray*}
\|{{\mathcal F}}^{-1}[{\bf 1}_{{R}_{+\frac{\lambda}{2}}}]
\|_{L_{x}^{r_{\si}}}
\le C\frac{k}{2^{3j}}2^{-j\si},
\end{eqnarray*}
where $r_{\si}$ satisfies
$-\si=1/r_{\si}-1.$
Combining the above inequality with the Young and Minkowski inequalities, 
we obtain
\begin{eqnarray*}
\lefteqn{\|P_{R,\lambda}F\|_{L_{x}^{p}L_{t}^{q}}}\\
&\le&
C\|\F^{-1}[\varphi]({\lambda}t/{2})\F^{-1}[
{\bf 1}_{{R}_{+\frac{\lambda}{2}}}]
\|_{L_{x}^{r_{\si}}L_{t}^{1}}\|F\|_{L_{x}^{p_{\si}}L_{t}^{q}}\\
&\le&
C\|\F^{-1}[\varphi](\lambda t/2)
\F^{-1}[{\bf 1}_{{R}_{+\frac{\lambda}{2}}}]
\|_{L_{t}^{1}L_{x}^{r_{\si}}}\|F\|_{L_{x}^{p_{\si}}L_{t}^{q}}
\\
&\le&C2^{-j\si}\|F\|_{L_{x}^{p_{\si}}L_{t}^{q}}.
\end{eqnarray*}
This proves (\ref{bound2}) and completes the proof.	
\end{proof}

\subsection{Proof of main results}
%%%%%%%%%%%%%%%%%%%%%%%%%%%%%%%%%%%%%%%%%%%%%%%%%%%%%%%%%%%%%%%%%%%%%

We now prove Theorem \ref{thm:S} (\ref{qwe}). 
We may suppose that $p\neq q$ because the case $p=q$ is already proved in 
\cite[Theorem B.1]{MS2}.
We evaluate $I_2$ in (\ref{mob}) only since the estimate for $I_1$ in (\ref{mob}) 
is similar.

To evaluate $I_{2}$, we first show that
\begin{eqnarray}
\lefteqn{\qquad\|\sum_{\tau_{k}^{j}\in{{\mathcal D}}}
\sum_{\tau_{\ell}^{j}:\tau_{\ell}^{j}\sim \tau_{k}^{j}}
|\pt_{x}|^{s}e^{-t\pt_{x}^{3}}f_{\tau_{k}^{j}}
\overline{|\pt_{x}|^{s}e^{-t\pt_{x}^{3}}f_{\tau_{\ell}^{j}}}
\|_{L_{x}^{\frac{p}{2}}L_{t}^{\frac{q}{2}}}}
\label{s1}\\
&\le&C\biggl(\sum_{\tau_{k}^{j}\in{{\mathcal D}}}
\sum_{\tau_{\ell}^{j}:\tau_{\ell}^{j}\sim \tau_{k}^{j}}
|\tau^{j}_{k}|^{\delta\si}
\||\pt_{x}|^{s}e^{-t\pt_{x}^{3}}f_{\tau_{k}^{j}}
\overline{|\pt_{x}|^{s}e^{-t\pt_{x}^{3}}f_{\tau_{\ell}^{j}}}
\|_{L_{x}^{\frac{p_{\si}}{2}}L_{t}^{\frac{q}{2}}}^{\frac{\delta}{2}}
\biggl)^{\frac{2}{\delta}},
\nonumber
\end{eqnarray}
where the exponent $p_{\si}$ is given by (\ref{ppt}).
The inequality (\ref{s1}) follows from
\begin{eqnarray}
\| T F \|_{
L_{x}^{\frac{p}{2}}L_{t}^{\frac{q}{2}}}
\le C\|2^{-2j\si}
F\|_{\dot{\ell}_{j}^{\frac{\delta}{2}}{\ell}_{k}^{\frac{\delta}{2}}
L_{x}^{\frac{p_{\si}}{2}}L_{t}^{\frac{q}{2}}}
\label{s2}
\end{eqnarray}
for function $F=F(j,k,t,x)$,
where $T$ is defined by
\begin{eqnarray*}
	(T F)(t,x) &=& \sum_{(j,k,\ell)\in\Lambda}P_{\tilde{A}_{j,k,\ell}} 
	F(j,k, t,x)\\
	&=&
	\sum_{m=-3,-2,2,3}\sum_{j\ge0}\sum_{k\ge\max\{0,-m\}}P_{\tilde{A}_{j,k,k+m}} 
	F(j,k, t,x),\\
\end{eqnarray*}
where $\tilde{A}_{j,k,\ell}$ is given by (\ref{tAB}),  
$\|a_{j}\|_{\dot{\ell}_{j}^{\delta}}
=(\sum_{j\in{{\mathbb Z}}}|a_{j}|^{\delta})^{1/\delta}$, and 
$\|a_{k}\|_{\ell_{k}^{\delta}}=(\sum_{k\in{{\mathbb Z}}_{+}}
|a_{k}|^{\delta})^{1/\delta}$. 
Indeed, taking
$$F(j,k,t,x)=\sum_{\tau_{\ell}^{j}:\tau_{\ell}^{j}\sim \tau_{k}^{j}}
|\pt_{x}|^{s}e^{-t\pt_{x}^{3}}f_{\tau_{k}^{j}}
\overline{|\pt_{x}|^{s}e^{-t\pt_{x}^{3}}f_{\tau_{\ell}^{j}}}$$
in \eqref{s2} and using triangle inequality, we have (\ref{s1}). 

Let us prove \eqref{s2}. 
The Plancherel identity and the almost orthogonality 
(Proposition \ref{orthgnal} (\ref{orthg})) imply 
\begin{eqnarray}
\|T F \|_{L_{x}^{2}L_{t}^{2}}
\le C\| F \|_{\dot{\ell}_{j}^{2}{\ell}_{k}^{2}
L_{x}^{2}L_{t}^{2}}. \label{ss2}
\end{eqnarray}
On the other hand, the triangle inequality and Proposition \ref{boundness} 
yield 
\begin{eqnarray}
\| TF \|_{L_{x}^{P}L_{t}^{Q}}
\le C\| 2^{-2j\theta\si}F \|_{\dot{\ell}_{j}^{1}{\ell}_{k}^{1}
L_{x}^{P_{\si}}L_{t}^{Q}},\label{ss3}
\end{eqnarray}
where
\begin{equation*}
\(\theta,\frac1P,\frac1Q,
\frac{1}{P_{\si}}\)
=\left\{
\begin{aligned}
&\left(\frac{p}{p-4},0,\frac{2(p-q)}{q(p-4)},\frac{2p}{p-4}\si
\right),
&& \text{if}\ p>q,\\
&\left(\frac{q}{q-4},\frac{2(q-p)}{p(q-4)},0,\frac{2(q-p)}{p(q-4)}+
\frac{2q}{q-4}\si
\right),
&& \text{if}\ p<q.
\end{aligned}
\right.
\end{equation*}
Interpolating (\ref{ss2}) and (\ref{ss3}), we obtain \eqref{s2}
with
\[
	\frac1\delta = \frac12- \min\( \frac{1}{p}, \frac1q\)
	=\frac12- \frac1{\max(p,q)}.
\]

To show (\ref{qwe}), we consider the two cases: 
$p<q$ and $p>q$.

\vskip1mm
\noindent
{\bf Case: $p<q$}. 
Since $\tau_k^j\sim\tau_\ell^j$ implies $k\neq0$ or $\ell\neq0$
\footnote{Notice that the case $k=0$ and $\ell=0$ never happens.}, 
we may assume $\ell\neq0$ in which case $\dist(0,\tau_{\ell}^j) \neq0$.

By an argument used in \cite[Proposition B.1]{MS2}, 
we find
\begin{eqnarray}
\lefteqn{\qquad\||\pt_{x}|^{\frac{1}{p_{\sigma}}+\si}e^{-t\pt_{x}^{3}}
f_{\tau_{k}^{j}}
\overline{|\pt_{x}|^{s}e^{-t\pt_{x}^{3}}
f_{\tau_{\ell}^{j}}}
\|_{L_{t,x}^{\frac{p_{\si}}{2}}}}\label{e12}\\
& &\qquad\le C\dist(0,\tau_{\ell}^j)^{s-\frac{1}{p_{\si}}-\si}
|\tau_{k}^{j}|^{-\frac{2}{p_{\si}}}
\||\xi|^{\si}\hat{f}_{\tau_{k}^{j}}\|_{L_{\xi}^{(\frac{p_{\si}}{2})'}}
\||\xi|^{\si}\hat{f}_{\tau_{\ell}^{j}}\|_{L_{\xi}^{(\frac{p_{\si}}{2})'}},
\nonumber
\end{eqnarray}
where we used the inequality $p_{\si}/2\ge2$. 
Further, \cite[Proposition 2.1]{MS1} yields 
\begin{eqnarray}
\lefteqn{\|
|\pt_{x}|^{-\frac{1}{p_{\sigma}}+\si}e^{-t\pt_{x}^{3}}f_{\tau_{k}^{j}}
\overline{|\pt_{x}|^{s}e^{-t\pt_{x}^{3}}f_{\tau_{\ell}^{j}}}
\|_{L_{x}^{\frac{p_{\si}}2}L_{t}^{\I}}}
\label{e22}\\
&\le&
\|
|\pt_{x}|^{-\frac{1}{p_{\sigma}}+\si}e^{-t\pt_{x}^{3}}
f_{\tau_{k}^{j}}
\|_{L_{x}^{p_{\si}}L_{t}^{\I}}
\|
|\pt_{x}|^{s}
e^{-t\pt_{x}^{3}}f_{\tau_{\ell}^{j}}
\|_{L_{x}^{p_{\si}}L_{t}^{\I}}
\nonumber\\
&\le&C \dist(0,\tau_{\ell}^j)^{s+\frac{1}{p_{\si}}-\si} 
\||\xi|^{\si}\hat{f}_{\tau_{k}^{j}}\|_{L_{\xi}^{(\frac{p_{\si}}{2})'}}
\||\xi|^{\si}\hat{f}_{\tau_{\ell}^{j}}\|_{L_{\xi}^{(\frac{p_{\si}}{2})'}},
\nonumber
\end{eqnarray}
where we also used the inequality $p_{\si}\ge4$. 
Hence, it holds from the Stein interpolation for mixed norm 
(see \cite[Section 7, Theorem 1]{BP}), (\ref{e12}) and (\ref{e22}) 
that
\begin{eqnarray}
\lefteqn{\|
|\pt_{x}|^{s}e^{-t\pt_{x}^{3}}f_{\tau_{k}^{j}}
\overline{|\pt_{x}|^{s}e^{-t\pt_{x}^{3}}f_{\tau_{\ell}^{j}}}
\|_{L_{x}^{\frac{p_{\si}}{2}}L_{t}^{\frac{q}{2}}}}
\label{sss4}\\
& &\le C|\tau_{k}^{j}|^{-\frac2q}
\||\xi|^{\si}\hat{f}_{\tau_{k}^{j}}\|_{L_{\xi}^{(\frac{p_{\si}}2)'}}
\||\xi|^{\si}\hat{f}_{\tau_{\ell}^{j}}\|_{L_{\xi}^{(\frac{p_{\si}}2)'}}.
\nonumber
\end{eqnarray}
Collecting (\ref{s1}) and (\ref{sss4}), we obtain 
\begin{eqnarray}
\|I_{2}\|_{L_{x}^{\frac{p}{2}}L_{t}^{\frac{q}{2}}}\le C\|
|\pt_{x}|^{\si}f\|_{\hat{M}^{\beta}_{\gamma,\delta}}^{2}.
\label{s3}
\end{eqnarray}

\vskip1mm
\noindent
{\bf Case: $p>q$}. As in the previous case, we may assume $\ell\neq0$. 
By an argument used in \cite[Proposition B.1]{MS2}, 
we find
\begin{eqnarray}
\lefteqn{\qquad\||\pt_{x}|^{\frac{p_{\si}+q
}{2p_{\si}q}+\si}e^{-t\pt_{x}^{3}}
f_{\tau_{k}^{j}}
\overline{|\pt_{x}|^{s}e^{-t\pt_{x}^{3}}
f_{\tau_{\ell}^{j}}}
\|_{L_{t,x}^{\frac{p_{\si}q}{p_{\si}+q}}}}\label{e1}\\
& &\quad\le C\dist(0,\tau_{\ell}^j)^{s-\frac{p_{\si}+q
}{2p_{\si}q}-\si}|\tau_{k}^{j}|^{-\frac{p_{\si}+q
}{p_{\si}q}}
\||\xi|^{\si}\hat{f}_{\tau_{k}^{j}}\|_{L_{\xi}^{(\frac{p_{\si}q}{p_{\si}+q})'}}
\||\xi|^{\si}\hat{f}_{\tau_{\ell}^{j}}\|_{L_{\xi}^{(\frac{p_{\si}q}{p_{\si}+q})'}}, 
\nonumber
\end{eqnarray}
where we used the inequality $p_{\si}q/(p_{\si}+q)\ge2$. 
On the other hand, \cite[Proposition 2.1]{MS1} yields 
\begin{eqnarray}
\lefteqn{\|
|\pt_{x}|^{\frac{2(p_{\si}+q)}{
p_{\si}q}+\si}e^{-t\pt_{x}^{3}}f_{\tau_{k}^{j}}
\overline{|\pt_{x}|^{s}e^{-t\pt_{x}^{3}}f_{\tau_{\ell}^{j}}}
\|_{L_{x}^{\I}L_{t}^{\frac{p_{\si}q}{2(p_{\si}+q)}}}}
\label{e2}\\
&\le&
\|
|\pt_{x}|^{\frac{2(p_{\si}+q)}{
p_{\si}q}+\si}e^{-t\pt_{x}^{3}}
f_{\tau_{k}^{j}}
\|_{L_{x}^{\I}L_{t}^{\frac{p_{\si}q}{p_{\si}+q}}}
\|
|\pt_{x}|^{s}
e^{-t\pt_{x}^{3}}f_{\tau_{\ell}^{j}}
\|_{L_{x}^{\I}L_{t}^{\frac{p_{\si}q}{p_{\si}+q}}}
\nonumber\\
&\le&C \dist(0,\tau_{\ell}^j)^{s-\frac{2(p_{\si}+q)}{
p_{\si}q}-\si} 
\||\xi|^{\si}\hat{f}_{\tau_{k}^{j}}
\|_{L_{\xi}^{(\frac{p_{\si}q}{p_{\si}+q})'}}
\||\xi|^{\si}\hat{f}_{\tau_{\ell}^{j}}
\|_{L_{\xi}^{(\frac{p_{\si}q}{p_{\si}+q})'}}.
\nonumber
\end{eqnarray}
Combining Stein interpolation for mixed norm with 
(\ref{e1}) and (\ref{e2}), we obtain
\begin{eqnarray}
\lefteqn{\|
|\pt_{x}|^{s}e^{-t\pt_{x}^{3}}f_{\tau_{k}^{j}}
\overline{|\pt_{x}|^{s}e^{-t\pt_{x}^{3}}f_{\tau_{\ell}^{j}}}
\|_{L_{x}^{\frac{p\si}{2}}L_{t}^{\frac{q}{2}}}}
\label{ss4}\\
& &\qquad\le C|\tau_{k}^{j}|^{-\frac2{p_{\si}}}
\||\xi|^{\si}\hat{f}_{\tau_{k}^{j}}\|_{L_{\xi}^{(\frac{p_{\si}q}{p_{\si}+q})'}}
\||\xi|^{\si}\hat{f}_{\tau_{\ell}^{j}}\|_{L_{\xi}^{(\frac{p_{\si}q}{p_{\si}+q})'}}.
\nonumber
\end{eqnarray}
Collecting (\ref{s1}) and (\ref{ss4}), we obtain (\ref{s3}). 
For the sub-case $p_{\si}<q$, the similar argument as that in the case $p<q$ 
yields (\ref{s3}). 

In a similar way, we obtain
\begin{eqnarray}
\|I_{1}\|_{L_{x}^{\frac{p}{2}}L_{t}^{\frac{q}{2}}}\le 
C\||\pt_{x}|^{\si}f\|_{\hat{M}^{\alpha}_{\beta,\gamma}}^{2}.
\label{s35}
\end{eqnarray}
Combining (\ref{mob}) with (\ref{s3}) and (\ref{s35}), we obtain (\ref{qwe}). 
This completes the proof of Theorem \ref{thm:S}.

%%%%%%%%%%%%%%%%%%%%%%%%%%%%%%%%%%%%%%%%%%%%%%%%%%%%%%
%
%   Application to well-posedness
%
%%%%%%%%%%%%%%%%%%%%%%%%%%%%%%%%%%%%%%%%%%%%%%%%%%%%%%

\section{Application to well-posedness}

In this section we prove local and global well-posedness for (\ref{gKdV}) (Theorems \ref{thm:lwp} 
and \ref{thm:SDS}).
To this end, we consider integral form of \eqref{gKdV}:
\begin{eqnarray}
u(t) = e^{-(t-t_0)\pt_x^3} u_0
+ \mu \int_{t_0}^t e^{-(t-s)\pt_x^3}\pt_x (|u|^{2\alpha}u)(s) ds.
\label{gKdV2}
\end{eqnarray}
Let $\sigma>0$ and define $\beta$ by $1/\beta=1/\alpha + \sigma$ 
as in Theorem \ref{thm:lwp}. 
For an interval $I\subset \R$, we introduce function spaces $L(I)$, $M(I)$, $S(I)$, and $D_\si(I)$ 
as follows:
\begin{eqnarray*}
	L(I)&:=& \left\{ u\in \Sch'(I \times \R) \left|
	\norm{u}_{L(I)}:= \norm{|\pt_x|^{\frac1{\alpha}}u}_{L^{5\alpha}_{x}(\R ; L^{\frac{5\alpha}{3}}_t(I))} <\I
	\right.\right\},\\
	M(I)&:=& \left\{ u\in \Sch'(I \times \R) \left|
	\norm{u}_{M(I)}:= \norm{|\pt_x|^{\frac1{2\alpha}}u}_{L^{\frac{10\alpha}{3}}_{x}(\R ; L^{\frac{5\alpha}{2}}_t(I))} <\I
	\right.\right\},\\
	S(I)&:=& \left\{ u\in \Sch'(I \times \R) \left|
	\norm{u}_{S(I)}:= \norm{u}_{L^{\frac{5\alpha}2}_{x}(\R ; L^{5\alpha}_{t}(I)) } <\I
	\right.\right\},\\
	D_\sigma(I)&:=& \left\{ u\in \Sch'(I \times \R) \left|
	\norm{u}_{D_\sigma(I)}:= \norm{|\pt_x|^{\sigma+\frac1{3\beta}}u}_{L^{3\beta}_{t,x}(I\times \R)} <\I
	\right.\right\}
.
\end{eqnarray*}
For an interval $I\subset \R$,
we say a function $u\in M(I) \cap S(I)$ is a solution to \eqref{gKdV} on $I$
if $u$ satisfies \eqref{gKdV2} in the $M(I) \cap S(I)$ sense. 
We modify a well-posedness result in \cite{MS1}.

\begin{lemma}\label{lem:lwp_pre}
Let $5/3<\alpha\le 20/9$. %($8/5<\al<10/3$?). 
Denote by $Z(I)$ either $L(I)$ or $M(I)$.
Let $t_0 \in \R$ and $I$ be an interval with $t_{0}\in I$.
Then, there exists a universal constant $\delta>0$ such that 
if  a tempered distribution $u_0$ and an interval $I\ni t_0$ satisfy
\[
	\eta_0=\eta_0(I;u_0,t_0):=\norm{e^{-(t-t_0)\d_x^{3}} u_0}_{S(I)}
	+ \norm{e^{-(t-t_0)\d_x^{3}}u_0}_{Z(I)}
	\le \delta,
\]
then there exists a unique solution $u(t)$ on $I$ to (\ref{gKdV}) satisfying
\[
	\norm{u}_{S(I)}+ \norm{u}_{Z(I)} \le 2\eta.
\]
Furthermore, the solution satisfies $u(t)- e^{-(t-t_0)\d_x^{3}} u_0 \in C(I;\hat{L}^\alpha)$.
\end{lemma}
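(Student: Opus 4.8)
The plan is to solve the integral equation \eqref{gKdV2} by the contraction mapping principle in $X(I):=S(I)\cap Z(I)$, normed by $\norm{u}_{X(I)}:=\norm{u}_{S(I)}+\norm{u}_{Z(I)}$, seeking a fixed point of
\[
	\Phi(u)(t):=e^{-(t-t_0)\d_x^{3}}u_0+\mu\int_{t_0}^{t}e^{-(t-s)\d_x^{3}}\d_x\bigl(|u|^{2\alpha}u\bigr)(s)\,ds
\]
in the ball $B:=\{u\in X(I):\norm{u}_{X(I)}\le 2\eta_0\}$. By hypothesis the free term already obeys $\norm{e^{-(t-t_0)\d_x^{3}}u_0}_{X(I)}=\eta_0$, so only the Duhamel part needs to be estimated, and all implied constants will be independent of $I$ because every exponent used below is invariant under the scaling $u(t,x)\mapsto\lambda^{1/\alpha}u(\lambda^{3}t,\lambda x)$.

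First I would record the linear ingredients. Feeding the generalized Strichartz estimate \cite{G1,MS1} the pairs $(p,q)=(5\alpha/2,5\alpha)$, $(5\alpha,5\alpha/3)$ and $(10\alpha/3,5\alpha/2)$, one checks that the derivative orders $s=-1/p+2/q$ come out to $0$, $1/\alpha$ and $1/(2\alpha)$ and that $2/p+1/q=1/\alpha$ in each case, so that, for $\alpha$ in the range under consideration,
\[
	\norm{e^{-t\d_x^{3}}f}_{S(\R)}+\norm{e^{-t\d_x^{3}}f}_{L(\R)}+\norm{e^{-t\d_x^{3}}f}_{M(\R)}\lesssim\norm{f}_{\hat{L}^{\alpha}}.
\]
Dualizing these, combining with the Kato local smoothing estimate (the endpoint $(p,q)=(\infty,2)$, which carries a \emph{full} derivative gain) to absorb the $\d_x$ sitting in front of the nonlinearity, and invoking the Christ--Kiselev lemma to pass from the non-retarded to the retarded operator, I would obtain an inhomogeneous estimate
\[
	\Bigl\|\int_{t_0}^{t}e^{-(t-s)\d_x^{3}}\d_x G(s)\,ds\Bigr\|_{X(I)}\lesssim\norm{G}_{W(I)}
\]
for a suitable auxiliary space $W(I)$ of mixed-Lebesgue type dual to the ones above, and moreover this retarded operator maps $W(I)$ continuously into $C(I;\hat{L}^{\alpha})$.

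The nonlinear estimate is then handled by H\"older's inequality in $x$ and $t$: one distributes the $2\alpha+1$ factors of $|u|^{2\alpha}u$ between the $S(I)$- and $Z(I)$-norms to get, schematically, $\norm{|u|^{2\alpha}u}_{W(I)}\lesssim\norm{u}_{S(I)}^{2\alpha}\norm{u}_{Z(I)}$, and likewise the difference bound $\norm{|u|^{2\alpha}u-|v|^{2\alpha}v}_{W(I)}\lesssim(\norm{u}_{X(I)}+\norm{v}_{X(I)})^{2\alpha}\norm{u-v}_{X(I)}$, which relies only on the elementary pointwise inequality $\bigl||a|^{2\alpha}a-|b|^{2\alpha}b\bigr|\lesssim(|a|+|b|)^{2\alpha}|a-b|$, valid for every $\alpha>0$. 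Making these H\"older exponents compatible with the admissible exponents of the linear estimates is precisely what pins down the range $5/3<\alpha\le 20/9$. Combining the last three displays, $\Phi$ maps $B$ into itself and is a contraction there provided $C\eta_0^{2\alpha}\le\tfrac12$, that is for $\eta_0\le\delta$ with $\delta$ a universal constant; the resulting fixed point $u$ satisfies $\norm{u}_{S(I)}+\norm{u}_{Z(I)}\le 2\eta_0$. Uniqueness in the full class follows by a standard continuation-on-subintervals argument, and $u-e^{-(t-t_0)\d_x^{3}}u_0\in C(I;\hat{L}^{\alpha})$ is immediate from the mapping property of the retarded operator into $C(I;\hat{L}^{\alpha})$ noted above, once continuity in $t$ is checked by dominated convergence.

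The step I expect to be the main obstacle is the construction of the auxiliary space $W(I)$: one must find a single space into which the inhomogeneous Strichartz estimate carrying the derivative $\d_x$ maps $X(I)$ and in which the multilinear H\"older estimate for $|u|^{2\alpha}u$ closes, \emph{simultaneously} for both admissible choices $Z\in\{L,M\}$ and throughout the whole window $5/3<\alpha\le 20/9$. Recovering the entire derivative of the nonlinearity at scale-critical regularity with no loss is the delicate point, and it is here that the Kato smoothing estimate (rather than a plain Strichartz estimate) is indispensable and that the numerology of \cite{MS1} has to be revisited to accommodate the added flexibility in $Z$.
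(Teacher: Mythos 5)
Your overall strategy (contraction in $S(I)\cap Z(I)$, homogeneous Strichartz bounds for the three pairs, a dual-type inhomogeneous estimate built on Kato smoothing plus Christ--Kiselev to recover the $\d_x$ in the nonlinearity) is the same method the paper relies on; in fact the paper gives no proof at all here, but simply observes that $L(I)=X(I;1/\alpha,\alpha)$ and $M(I)=X(I;1/(2\alpha),\alpha)$ in the notation of \cite{MS1} and that these pairs are ``acceptable and conjugate-acceptable'' in the sense of \cite[Definitions 1.1 and 3.1]{MS1} for $10/7<\alpha<8/3$ (resp.\ $3/2\le\alpha<7/3$), so the well-posedness machinery of \cite{MS1} applies verbatim. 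Your exponent bookkeeping for the homogeneous estimates $(p,q)=(5\alpha/2,5\alpha),(5\alpha,5\alpha/3),(10\alpha/3,5\alpha/2)$ is correct.

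However, as a self-contained proof your write-up has a genuine gap, and you have in effect located it yourself: the auxiliary space $W(I)$, the inhomogeneous estimate $\bigl\|\int_{t_0}^t e^{-(t-s)\d_x^3}\d_x G\,ds\bigr\|_{S\cap Z}\lesssim\norm{G}_{W}$ with full recovery of the derivative, and the continuity into $C(I;\hat L^\alpha)$ are precisely the content of \cite[Proposition 2.5]{MS1} (the ``conjugate-acceptable'' condition), and none of this is actually constructed or verified in your argument --- it is exactly where the admissibility window $5/3<\alpha\le 20/9$ (in fact a wider one) gets checked. Moreover, your claim that the nonlinear and difference estimates rely ``only on the elementary pointwise inequality'' is not correct: the norms $L(I)$, $M(I)$ and the dual norm $\|\,|\d_x|^{1/(2\alpha)}\cdot\,\|_{L^{p(N)}_xL^{q(N)}_t}$ carry fractional derivatives, so after H\"older in mixed norms you still need a fractional Leibniz/chain rule in $L^p_xL^q_t$ (this is \cite[Lemma 3.4]{MS1}, invoked explicitly in the paper's proof of Theorem \ref{thm:lwp}), and since $2\alpha$ is not an integer the Lipschitz-difference bound for $|u|^{2\alpha}u$ in a norm containing $|\d_x|^{1/\alpha}$ or $|\d_x|^{1/(2\alpha)}$ requires the usual extra care (it works here because the derivative order is below $1\le 2\alpha$, but it is not a consequence of the pointwise inequality alone). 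With those two ingredients imported from \cite{MS1}, your contraction argument closes; without them it is a plan rather than a proof.
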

We omit the proof.
Instead, we remark that
$L(I):=X(I;1/\alpha,\alpha)$ and $M(I):=X(I;1/(2\alpha),\alpha)$ 
in the notation of \cite[Definition 1.1]{MS1},
and that the pairs $(s,r)=(1/(2\alpha),\alpha), (1/\alpha,\alpha)$ are acceptable and conjugate-acceptable
in the sense of \cite[Definitions 1.1 and 3.1]{MS1} as long as 
$10/7 < \alpha < 8/3$ if $Z=L$ and
$3/2 \le \alpha < 7/3$ if $Z=M$, which
 are weaker than our assumption.

As a corollary of %the main result, 
Theorem \ref{thm:S} and Lemma \ref{lem:lwp_pre}, we obtain an
existence result.
\begin{corollary}\label{cor:existence}
Let $\alpha$, $\si$, $\beta$, $\gamma$ and $\delta$ 
satisfy the assumption of Theorem \ref{thm:lwp}.
Then, for any $u_0 \in |\d_x|^{-\sigma} \hat{M}^{\beta}_{\delta,\gamma}$ and $t_0 \in \R$
there exists an interval $I \subset \R$, $I\ni t_0$ such that there exists a unique solution $u(t)$ on $I$ to (\ref{gKdV}).
The solution belongs to $C(I;|\d_x|^{-\sigma} \hat{M}^{\beta}_{\gamma,\delta}+\hat{L}^{\alpha})$.
\end{corollary}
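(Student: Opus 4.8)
The plan is to deduce Corollary \ref{cor:existence} by combining the refined Strichartz estimate of Theorem \ref{thm:S} (which controls the linear Airy evolution of the data in the relevant space-time norms) with the abstract small-data well-posedness statement of Lemma \ref{lem:lwp_pre}. The only gap between the two is that Lemma \ref{lem:lwp_pre} requires the smallness quantity $\eta_0(I;u_0,t_0)=\norm{e^{-(t-t_0)\d_x^3}u_0}_{S(I)}+\norm{e^{-(t-t_0)\d_x^3}u_0}_{Z(I)}$ (with $Z=M$, say) to be below a fixed threshold $\delta$, whereas here the data is merely in $|\d_x|^{-\sigma}\hat M^\beta_{\gamma,\delta}$ with no smallness assumed. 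The standard device is: since $|\d_x|^\sigma u_0\in\hat M^\beta_{\gamma,\delta}$, one can split $u_0=v_0+w_0$ where $\norm{|\d_x|^\sigma v_0}_{\hat M^\beta_{\gamma,\delta}}$ is as small as we like and $w_0$ is \emph{smooth and spatially localized} (e.g.\ a Schwartz function obtained by Fourier truncation), for which the linear flow has finite and small space-time norms on a sufficiently short interval $I\ni t_0$ by dominated convergence. Then $\eta_0(I;u_0,t_0)\le\eta_0(\R;v_0,t_0)+\eta_0(I;w_0,t_0)$, the first term controlled by Theorem \ref{thm:S} and the second made small by shrinking $I$.

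Concretely, I would proceed as follows. First, fix $t_0$ and $u_0$ with $g:=|\d_x|^\sigma u_0\in\hat M^\beta_{\gamma,\delta}$. By the definition of the hat-Morrey norm (Definition \ref{Morrey}) and the density of nice functions, choose a decomposition $g=g_1+g_2$ with $\norm{g_1}_{\hat M^\beta_{\gamma,\delta}}\le\eps$ (where $\eps>0$ is to be chosen) and $g_2$ such that $|\d_x|^{-\sigma}g_2\in\Sch(\R)$, or at least lies in $\hat L^\alpha\cap|\d_x|^{-\sigma}\hat L^\beta$ with rapidly decaying Fourier support; set $v_0=|\d_x|^{-\sigma}g_1$ and $w_0=|\d_x|^{-\sigma}g_2$. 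Second, apply Theorem \ref{thm:S} (with the exponent choices matching $S(\R)=L^{5\alpha/2}_xL^{5\alpha}_t(\R)$ and $M(\R)$, which are admissible under Assumption \ref{A:lwp}) to get $\norm{e^{-(t-t_0)\d_x^3}v_0}_{S(\R)}+\norm{e^{-(t-t_0)\d_x^3}v_0}_{M(\R)}\le C\norm{g_1}_{\hat M^\beta_{\gamma,\delta}}\le C\eps$. Third, since $w_0$ is a fixed smooth localized function, $e^{-(t-t_0)\d_x^3}w_0\in S(\R)\cap M(\R)$ (using e.g.\ the global estimates for data in $\hat L^\alpha$, Theorem 1.1, since $w_0\in\hat L^\alpha$), hence by absolute continuity of the integral there is an interval $I\ni t_0$ with $\norm{e^{-(t-t_0)\d_x^3}w_0}_{S(I)}+\norm{e^{-(t-t_0)\d_x^3}w_0}_{M(I)}\le\eps$. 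Fourth, choose $\eps$ so that $2C\eps\le\delta$; then $\eta_0(I;u_0,t_0)\le 2C\eps\le\delta$ and Lemma \ref{lem:lwp_pre} produces a unique solution $u$ on $I$ with $u-e^{-(t-t_0)\d_x^3}u_0\in C(I;\hat L^\alpha)$. Finally, the membership $u\in C(I;|\d_x|^{-\sigma}\hat M^\beta_{\gamma,\delta}+\hat L^\alpha)$ follows by writing $u=e^{-(t-t_0)\d_x^3}u_0+(u-e^{-(t-t_0)\d_x^3}u_0)$: the first summand lies in $C(I;|\d_x|^{-\sigma}\hat M^\beta_{\gamma,\delta})$ since that space is translation- and Airy-flow-invariant (as noted in the discussion preceding Definition \ref{def:scattering}) and $t\mapsto e^{-t\d_x^3}$ is strongly continuous there, while the Duhamel remainder lies in $C(I;\hat L^\alpha)$ by Lemma \ref{lem:lwp_pre}.

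The main obstacle, and the step deserving the most care, is the \emph{approximation/splitting} step together with verifying that the norms appearing in Lemma \ref{lem:lwp_pre} are genuinely controlled by Theorem \ref{thm:S}. For the latter, one must check that both $S(I)=L^{5\alpha/2}_xL^{5\alpha}_t$ and at least one of $L(I)$, $M(I)$ arise as instances of $\norm{|\d_x|^s e^{-t\d_x^3}f}_{L^p_xL^q_t}$ for $(p,q,s)$ satisfying the hypotheses of Theorem \ref{thm:S} with the \emph{same} $(\beta,\gamma,\delta)$ dictated by Assumption \ref{A:lwp}; this is a matter of solving the exponent relations $2/p+1/q=1/\alpha$, $s=-1/p+2/q$, $1/\beta=1/\alpha+\sigma$, etc., and checking the inequalities $0\le1/p\le1/4-\sigma$ and $1/q\le1/2-1/p-\sigma$ hold for the chosen pairs — this is exactly where the constraints $5/3<\alpha\le20/9$ and $0<\sigma\le\min(3/5-1/\alpha,\,1/4-2/(5\alpha))$ in Assumption \ref{A:lwp} get used, and the ranges $4/(5\alpha)+2\sigma\le1/\gamma<1/\beta$, $1/2-1/(5\alpha)\le1/\delta<1/\beta'$ are precisely what make the Morrey indices consistent. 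For the approximation step, the subtlety is that density of Schwartz functions in $\hat M^\beta_{\gamma,\delta}$ is not automatic (Morrey spaces are notoriously non-separable in the diagonal case $\delta=\infty$); here, however, $\delta<\infty$ under Assumption \ref{A:lwp} since $1/\delta\ge1/2-1/(5\alpha)>0$ and the norm is an $\ell^\delta$ sum, so a truncation-in-Fourier argument does converge, and this should be remarked explicitly (or else one argues directly with the $\ell^\delta$-tails of the defining sum, which is cleaner and avoids invoking density at all).
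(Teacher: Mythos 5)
Your endgame is the paper's: Theorem \ref{thm:S} feeding the small-data Lemma \ref{lem:lwp_pre}, then reading off $u\in C(I;|\d_x|^{-\sigma}\hat M^\beta_{\gamma,\delta}+\hat L^\alpha)$ from $u-e^{-(t-t_0)\d_x^3}u_0\in C(I;\hat L^\alpha)$ together with continuity of $t\mapsto |\d_x|^\sigma e^{-(t-t_0)\d_x^3}u_0$ in $\hat M^\beta_{\gamma,\delta}$. The difference is the middle of your argument: the paper performs no splitting of the datum at all. Theorem \ref{thm:S} is not a small-data estimate, so it applies to the given (large) $u_0$ and yields $\norm{e^{-(t-t_0)\d_x^3}u_0}_{L(\R)}+\norm{e^{-(t-t_0)\d_x^3}u_0}_{S(\R)}\le C\norm{|\d_x|^\sigma u_0}_{\hat M^\beta_{\gamma,\delta}}<\I$ directly; since the inner time exponents of $S$, $L$ (and $M$) are finite ($5\alpha$, $5\alpha/3$, $5\alpha/2$), the restriction of these norms to an interval $I\ni t_0$ tends to $0$ as $|I|\to 0$, by absolute continuity of the $t$-integral for a.e.\ fixed $x$ and dominated convergence in $x$ against the globally finite majorant. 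Hence $\eta_0(I;u_0,t_0)\le\delta$ for $I$ small and Lemma \ref{lem:lwp_pre} applies; no density or approximation question ever enters. Your small-plus-nice decomposition is the device needed when the critical space-time norm cannot be shrunk with the interval (an $L^\infty_t$ component, say) or when the linear estimate is only available on a dense subclass; neither occurs here, so that entire step is a detour.

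Moreover, the detour as executed has a loose end. For the Fourier-truncated piece you assert $w_0\in\hat L^\alpha$ in order to invoke the hat-Lebesgue Strichartz estimate, but hat-Morrey membership only gives $\hat g_2\in L^{\gamma'}$ on compact sets, and on a set of finite measure this embeds into $L^{\alpha'}$ only when $\gamma\le\alpha$. Assumption \ref{A:lwp} only guarantees $1/\gamma\ge 4/(5\alpha)+2\sigma$, which lies below $1/\alpha$ whenever $\sigma<1/(10\alpha)$, so for part of the admissible range of $(\sigma,\gamma)$ the truncated piece need not lie in $\hat L^\alpha$. The fix within your scheme is immediate: estimate $e^{-(t-t_0)\d_x^3}w_0$ by Theorem \ref{thm:S} itself, since $|\d_x|^\sigma w_0=g_2\in\hat M^\beta_{\gamma,\delta}$ — but once you do that, the same argument applied to $u_0$ makes the splitting superfluous, which is exactly the paper's short proof.
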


\begin{proof}[Proof of Corollary \ref{cor:existence}] 
One sees from Theorem \ref{thm:S} that if $\alpha > 8/5$ and $0<\sigma \le 1/4-2/(5\alpha)$ then
\begin{equation}\label{eq:linear1}
	\norm{e^{-(t-t_0)\d_x^3}u_0}_{L(\R)} + \norm{e^{-(t-t_0)\d_x^3}u_0}_{S(\R)}
	\le C \norm{ |\d_x|^\sigma  u_0}_{\hat{M}^\beta_{\gamma,\delta}} <\I.
\end{equation}
Hence, there exists an open neighborhood $I \subset \R $ of $t_0$ such that
$\eta_0(I;u_0,t_0)\le \delta$, where $\delta$ and $\eta_0$ are defined in Lemma \ref{lem:lwp_pre}.
Since $u(t)- e^{-(t-t_0)\d_x^{3}} u_0 \in C(I;\hat{L}^\alpha)$ and $|\d_x|^\sigma e^{-(t-t_0)\d_x^{3}} u_0 \in C(I;\hat{M}^\beta_{\gamma,\delta})$, we obtain the result.
\end{proof}
\begin{proof}[Proof of Theorem \ref{thm:lwp}]
To prove the theorem, it suffices to show that 
$u(t)- e^{-(t-t_0)\d_x^{3}} u_0 \in C(I, |\d_x|^{-\sigma} \hat{M}^{\beta}_{\gamma,\delta})$. 
We mimic the argument in \cite{MS1,MS2}.

We infer from the diagonal refined estimate and
the inhomogeneous Strichartz' estimate 
\cite[Proposition 2.5]{MS1} that
\begin{eqnarray}
\|u\|_{D_\sigma(I)}
\le C \norm{|\d_x|^\sigma u_0}_{\hat{M}^{\beta}_{\frac{3\beta}2,\frac{6\beta}{3\beta-2}}} + C
\| |u|^{2\alpha}u\|_{N_\sigma(I)},\label{k1}
\end{eqnarray}
where
\begin{eqnarray*}
\|f\|_{N_\sigma(I)}:=\||\pt_{x}|^{\frac{1}{3\beta}+\si}f\|_{L_{x}^{p(N_\sigma)}L_{t}^{q(N_\sigma)}},
\end{eqnarray*}
and $(p(N_\sigma),q(N_\sigma))$ is given by
\begin{eqnarray*}
\frac{2}{p(N_\sigma)}+\frac{1}{q(N_\sigma)}=\frac{1}{\beta}+2,\qquad
-\frac{1}{p(N_\sigma)}+\frac{2}{q(N_\sigma)}=\frac{1}{3\beta}.
\end{eqnarray*}
Note that the pair $(s,r)=(1/(3\beta),\beta)$ is acceptable and conjugate-acceptable
in the sense of \cite[Definitions 1.1 and 3.1]{MS1} if $5/3\le\beta<20/9$. 
To choose such $\beta$, we need the restrictions $5/3<\alpha\le20/9$ and $0<\sigma \le3/5-1/\alpha$.
We then apply the Leibniz rule for the fractional order derivatives 
\cite[Lemma 3.4]{MS1} to obtain 
\begin{eqnarray}
\||u|^{2\alpha} u\|_{N_\sigma(I)}\le C\|u\|_{S(I)}^{2\alpha}\|u\|_{D_\sigma(I)}.
\label{k2}
\end{eqnarray}
We divide $I$ into subintervals $\{I_j\}_{j=1}^J$ so that $\norm{u}_{S(I_j)}$ is small.
Then, it follows from \eqref{k1} and \eqref{k2} that $\|u\|_{D_\sigma (I_j)} <\I$ for each subinterval, showing $u \in D_\sigma (I)$.
Then, we conclude from the inhomogeneous Strichartz' estimate \cite[Proposition 2.5]{MS1} 
that 
\[
	|\pt_{x}|^{\sigma}\int_{t_0}^t e^{-(t-s)\pt_x^3}\pt_x (|u|^{2\alpha}u) ds.
	\in C(I;\hat{L}^{\beta}) %\hookrightarrow C(I;\hat{M}^{\beta}_{\gamma,\delta}).
\]
This completes Theorem \ref{thm:lwp} since $\hat{L}^{\beta} \hookrightarrow \hat{M}^{\beta}_{\gamma,\delta}$. 
\end{proof}

\begin{remark}
Let us summarize our assumption on the local well-posedenss result.
For the estimate \eqref{k1}, we need $5/3<\alpha\le20/9$ and $0<\sigma \le 3/5-1/\alpha$.
Further, the restriction $\sigma \le 1/4-2/(5\alpha)$ comes from \eqref{eq:linear1}.
The possible ranges of $\gamma$, $\delta$ are also ruled by \eqref{eq:linear1}.
\end{remark}

\subsection{Criterion for blowup and scattering}
We next show standard criterion for finite-time blowup and scattering.
These are essentially the same as \cite[Theorems 1.8 and 1.9]{MS1}.
Let $I_{\max} = (T_{\min},T_{\max})$ be the maximal interval.
\begin{theorem}[Criterion of finite-time blowup]
Suppose $\alpha$, $\sigma$, $\beta$, $\gamma$, and $\delta$ satisfy Assumption \ref{A:lwp}.
Let $u_0 \in |\d_x|^{-\sigma} \hat{M}^{\beta}_{\gamma,\delta}$ and let $u(t)$ be a corresponding solution
given in Theorem \ref{thm:lwp} with maximal lifespan $I_{\max}\ni 0$.
If $T_{\max}<\I$ then $\lim_{T\uparrow T_{\max}}\norm{u}_{S([0,T])}=\I$.
A similar result holds for backward in time.
\end{theorem}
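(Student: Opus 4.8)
The plan is to argue by contradiction in the standard way. Suppose $T_{\max}<\I$ but $\varliminf_{T\uparrow T_{\max}}\norm{u}_{S([0,T])}=:M<\I$; since $\norm{u}_{S([0,T])}$ is nondecreasing in $T$, this forces $\norm{u}_{S([0,T_{\max}))}=M<\I$. The goal is then to show that $u$ can be continued past $T_{\max}$, contradicting maximality. First I would split $[0,T_{\max})$ into finitely many subintervals $I_1,\dots,I_J$ on each of which $\norm{u}_{S(I_j)}\le\eta$, where $\eta$ is a small constant to be fixed by the local theory; this is possible precisely because the total $S$-norm is finite. On each $I_j$ one runs the estimates from the proof of Theorem~\ref{thm:lwp}: the inhomogeneous Strichartz estimate \cite[Proposition 2.5]{MS1} together with the refined linear estimate (Theorem~\ref{thm:S}) and the Leibniz-type bound \eqref{k2}, namely $\||u|^{2\alpha}u\|_{N_\sigma(I_j)}\le C\norm{u}_{S(I_j)}^{2\alpha}\norm{u}_{D_\sigma(I_j)}$, to absorb the nonlinear term and conclude that $\norm{u}_{M(I_j)}+\norm{u}_{D_\sigma(I_j)}$ is finite and controlled. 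Summing over $j=1,\dots,J$ gives $u\in M([0,T_{\max}))\cap D_\sigma([0,T_{\max}))\cap S([0,T_{\max}))$.

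Next I would use these global-in-time (on $[0,T_{\max})$) bounds to show that $u(t)$ has a limit in the relevant state space as $t\uparrow T_{\max}$. Concretely, for $t_1<t_2<T_{\max}$ one writes $u(t_2)-e^{-(t_2-t_1)\d_x^3}u(t_1)$ as the Duhamel term over $[t_1,t_2]$ and estimates its $|\d_x|^{-\sigma}\hat{M}^\beta_{\gamma,\delta}$-norm (using $\hat L^\beta\hookrightarrow\hat M^\beta_{\gamma,\delta}$, as in the proof of Theorem~\ref{thm:lwp}) by the nonlinear quantities over $[t_1,t_2]$, which tend to $0$ as $t_1,t_2\uparrow T_{\max}$ because the controlling norms are finite over the whole interval and hence have vanishing tails. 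Together with strong continuity of the Airy group this shows $\{u(t)\}$ is Cauchy, so $u(t)\to u^*$ in $|\d_x|^{-\sigma}\hat M^\beta_{\gamma,\delta}+\hat L^\alpha$ (more precisely, in the same class as in Corollary~\ref{cor:existence}) as $t\uparrow T_{\max}$.

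Finally, I would apply the local existence theory (Corollary~\ref{cor:existence}, via Lemma~\ref{lem:lwp_pre}) with initial data $u^*$ at time $T_{\max}$: this produces a solution on some interval $(T_{\max}-\epsilon, T_{\max}+\epsilon)$, and by uniqueness it agrees with $u$ on the overlap, yielding an extension of $u$ beyond $T_{\max}$. This contradicts the definition of $T_{\max}$, so in fact $\lim_{T\uparrow T_{\max}}\norm{u}_{S([0,T])}=\I$. The backward statement is identical after time reversal $t\mapsto -t$, which maps \eqref{gKdV} to an equation of the same form.

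I expect the main obstacle to be bookkeeping rather than conceptual: one must check that splitting $[0,T_{\max})$ into finitely many pieces of small $S$-norm genuinely propagates the $M$- and $D_\sigma$-bounds with constants that do not blow up as $J\to\I$ — i.e. that the relevant estimates are subadditive (or at worst multiplicatively controlled) across the concatenation of subintervals, so that finiteness of $\norm{u}_{S([0,T_{\max}))}$ really does give finiteness of the other norms up to $T_{\max}$. Since this is exactly the mechanism already used in the proof of Theorem~\ref{thm:lwp} and in \cite[Theorems 1.8, 1.9]{MS1}, the argument is routine, and one may simply refer to those for the details.
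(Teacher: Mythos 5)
Your argument is correct and is essentially the paper's own route: the paper proves nothing new here but refers to the standard continuation argument of \cite[Theorems 1.8 and 1.9]{MS1}, which is exactly the contradiction scheme you describe (finite $S$-norm up to $T_{\max}$ $\Rightarrow$ propagate the auxiliary bounds on finitely many subintervals, pass to a limit of $u(t)$ in the state space via the Duhamel formula and the embedding $\hat{L}^{\beta}\hookrightarrow\hat{M}^{\beta}_{\gamma,\delta}$, then restart the local theory past $T_{\max}$). The bookkeeping you flag (uniformity when concatenating subintervals and when showing the Cauchy property near $T_{\max}$) is handled exactly as in those cited results, so no gap remains.
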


\begin{theorem}[Characterization of scattering]\label{thm:sc}
Suppose $\alpha$, $\sigma$, $\beta$, $\gamma$, and $\delta$ satisfy Assumption \ref{A:lwp}.
Let $u_0 \in |\d_x|^{-\sigma} \hat{M}^{\beta}_{\gamma,\delta}$ and let $u(t)$ be a corresponding solution
given in Theorem \ref{thm:lwp} with maximal lifespan $I_{\max}\ni 0$.
The following three statements are equivalent
\begin{itemize}
\item $u(t)$ scatters forward in time in the sense of Definition \ref{def:scattering};
\item $\norm{u}_{L([0,T_{\max}))} <\I$;
\item $\norm{u}_{S([0,T_{\max}))} <\I$;
\end{itemize}
Further, if either one of the above (hence all of the above) holds then $e^{t\d_x^3}u(t)%-u(0)
$ converges
as $t\to\I$ in $\hat{L}^{\alpha} \cap |\d_x|^{-\sigma} \hat{L}^\beta$.
\end{theorem}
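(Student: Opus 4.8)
The plan is to prove Theorem \ref{thm:sc} by following the standard template for scattering criteria in dispersive equations, adapting the argument of \cite[Theorems 1.8 and 1.9]{MS1} to the present $|\d_x|^{-\sigma}\hat M^\beta_{\gamma,\delta}$-setting. The key observation is that $S(I)$ and $L(I)$ are the two ``scattering norms'' controlling the nonlinearity: by the inhomogeneous Strichartz estimate \cite[Proposition 2.5]{MS1} together with the fractional Leibniz rule \cite[Lemma 3.4]{MS1}, and as already exploited in the proof of Theorem \ref{thm:lwp}, the Duhamel term is controlled by $\|u\|_{S(I)}^{2\alpha}$ times the relevant norm of $u$, so finiteness of $\|u\|_{S([0,T_{\max}))}$ is precisely the obstruction to global existence and scattering.

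First I would prove the equivalence of the three statements. The implication $\|u\|_{S([0,T_{\max}))}<\I \Rightarrow \|u\|_{L([0,T_{\max}))}<\I$ (and vice versa) follows by partitioning $[0,T_{\max})$ into finitely many subintervals $I_j$ on which $\|u\|_{S(I_j)}$ is smaller than the small-data threshold $\delta$ of Lemma \ref{lem:lwp_pre}, then running the fixed-point estimate on each $I_j$ to bound $\|u\|_{L(I_j)}$ (and the reverse direction analogously, using that both $S$ and $L=Z$ appear symmetrically in $\eta_0$). For $\|u\|_{S([0,T_{\max}))}<\I \Rightarrow$ scattering forward in time: finiteness of the scattering norm lets one show that $e^{t\d_x^3}u(t)$ is Cauchy in $\hat L^\alpha \cap |\d_x|^{-\sigma}\hat L^\beta$ as $t\uparrow T_{\max}$; by the standard continuation argument this forces $T_{\max}=\I$ (otherwise $u$ extends past $T_{\max}$, contradicting maximality), and the Cauchy property yields a limit, which is exactly scattering in the sense of Definition \ref{def:scattering}. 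Concretely, for $t_1<t_2$ one writes $e^{t_2\d_x^3}u(t_2) - e^{t_1\d_x^3}u(t_1) = \mu\int_{t_1}^{t_2} e^{s\d_x^3}\d_x(|u|^{2\alpha}u)(s)\,ds$ and estimates the right side in $\hat L^\alpha$ and $|\d_x|^{-\sigma}\hat L^\beta$ by the inhomogeneous estimate and Leibniz rule, obtaining a bound that tends to $0$ because $\|u\|_{S((t_1,t_2))}\to0$.

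Conversely, scattering forward in time $\Rightarrow \|u\|_{S([0,T_{\max}))}<\I$: if $u$ scatters there is $u_+$ with $\|e^{t\d_x^3}u(t) - u_+\|\to0$, hence $\|e^{-(t-T)\d_x^3}u(T) - e^{-(t-T)\d_x^3}e^{T\d_x^3}u_+\|_{S([T,\I))}\to 0$ and by density/continuity of the free flow one can make $\|e^{-(t-T)\d_x^3}u(T)\|_{S([T,\I))} + \|e^{-(t-T)\d_x^3}u(T)\|_{Z([T,\I))}$ below $\delta$ for $T$ close to $T_{\max}=\I$; applying Lemma \ref{lem:lwp_pre} on $[T,\I)$ gives $\|u\|_{S([T,\I))}<\I$, and combining with $\|u\|_{S([0,T])}<\I$ on the compact piece finishes. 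Finally, the stated convergence of $e^{t\d_x^3}u(t)$ in $\hat L^\alpha \cap |\d_x|^{-\sigma}\hat L^\beta$ is read off from the Cauchy estimate above. The main obstacle I anticipate is bookkeeping: making sure all the exponent pairs $(s,r)$ entering the inhomogeneous Strichartz and Leibniz estimates are ``acceptable/conjugate-acceptable'' in the sense of \cite{MS1} throughout the range fixed by Assumption \ref{A:lwp}, and in particular tracking the $|\d_x|^\sigma$-weight consistently so that the Duhamel term indeed lands in $\hat L^\beta$ (as in the proof of Theorem \ref{thm:lwp}) rather than merely in $\hat M^\beta_{\gamma,\delta}$; once that is set up, the scattering argument is the routine fixed-point-plus-continuation scheme.
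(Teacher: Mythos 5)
Your proposal is correct and follows essentially the same route the paper intends: the paper itself gives no proof of Theorem \ref{thm:sc}, stating only that it is ``essentially the same as \cite[Theorems 1.8 and 1.9]{MS1}'', and your argument is exactly that standard scheme (subdivision plus Lemma \ref{lem:lwp_pre} for the equivalence of the $S$ and $L$ norms, a Cauchy estimate on $e^{t\d_x^3}u(t)$ in $\hat{L}^\alpha \cap |\d_x|^{-\sigma}\hat{L}^\beta$ via the inhomogeneous Strichartz and Leibniz estimates for the forward implication, and smallness of the free evolution near $t=\I$ plus the local theory for the converse), with the embedding $\hat{L}^\beta \hookrightarrow \hat{M}^\beta_{\gamma,\delta}$ transferring the limit to the scattering space of Definition \ref{def:scattering}.
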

\subsection{Stability estimate}
By a standard argument, we also obtain a stability estimate.
To state it, we introduce a function space with the following norm.
\begin{eqnarray*}
\|f\|_{N(I)}=\||\pt_{x}|^{\frac{1}{2\alpha}}f\|_{L_{x}^{p(N)}(\R; L_{t}^{q(N)}(I))},
\end{eqnarray*}
with
\[
	\(\frac1{p(N)},\frac1{q(N)}\) = \(\frac1{p(M)}, \frac1{q(M)} \) + 2\alpha \(\frac1{p(S)},\frac1{q(S)}  \).
\]

\begin{theorem}\label{thm;stability}
Suppose $\alpha$, $\sigma$, $\beta$, $\gamma$, and $\delta$ satisfy Assumption \ref{A:lwp}.
Let $I\subset \R$ be an interval containing $t_0$. 
Let $\tilde{u}$ be an approximate solution to \eqref{gKdV2} on $I\times\rre$ in such a sense that
\[
	\tilde{u}(t) = e^{-(t-t_0)\d_x^3} \tilde{u}(t_0) + \int_{t_0}^t e^{-(t-s)\d_x^3} (\mu \d_x (|u|^{2\alpha}u) (s)+ e(s) )ds
\]
holds in $L(I) \cap S(I)$ for some function $e \in N(I)$. 
Assume that $\tilde{u}$ satisfies
\begin{eqnarray*}
\|\tilde{u}\|_{S(I)}
+\|\tilde{u}\|_{M(I)}
&\le&M,
\end{eqnarray*}
for some $M>0$. 
Then there exists 
$\varepsilon_{1}=\varepsilon_{1}(M)>0$ 
such that if
\begin{eqnarray*}
\|e^{-t\d_x^3}(u({t_0})-\tilde{u}({t_0}))\|_{S(I)}+
\|e^{-t\d_x^3}(u({t_0})-\tilde{u}({t_0}))\|_{M(I)}+
\|e\|_{N(I)}
\le\varepsilon
\end{eqnarray*}
and $0<\varepsilon<\varepsilon_{1}$, 
then there exists a solution $u$ to (\ref{gKdV}) on 
$I\times\rre$ satisfies 
\begin{align}
\|u-\tilde{u}\|_{S(I)}+
\|u-\tilde{u}\|_{M(I)}
&{}\le C\varepsilon, \label{tt10}\\
\||u|^{2\alpha}u
-|\tilde{u}|^{2\alpha}\tilde{u}
\|_{N(I)}
&{}\le C\varepsilon,\label{tt11}
\end{align}
where the constant $C$ depends only on $M$. 
Further, if $u({t_0})-\widetilde{u}({t_0}) \in |\d_x|^{-\sigma} \hat{M}^{\beta}_{\gamma,\delta}$
for some $\tau \in I$
then, it also holds that
\begin{equation}\label{tt12}
	\||\d_x|^{\sigma}(u-\tilde{u})\|_{L_{t}^{\infty}(I; \hat{M}^{\beta}_{\gamma,\delta} )}
	\le 
	\| |\d_x|^{\sigma}(u(t_{0}) - \tilde{u}(t_{0})) \|_{\hat{M}^{\beta}_{\gamma,\delta}}+
	C\varepsilon.
\end{equation}

\begin{proof}[Proof of Theorem \ref{thm;stability}] 
Once we obtain Theorem \ref{thm:S}, the proof follows from 
the standard continuity argument (See \cite[Lemma 3.1, Proposition 3.2]{MS2}). 
So we omit the detail. 
\end{proof}

\end{theorem}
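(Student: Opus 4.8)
The plan is to run the standard short-time perturbation argument, exactly as for \cite[Lemma~3.1, Proposition~3.2]{MS2} and \cite{MS1}, the only new input being that the non-diagonal refined estimate of Theorem~\ref{thm:S} is now available and is what lets the last assertion \eqref{tt12} close. Set $w:=u-\tilde u$, where $u$ is the solution we must construct. Subtracting the two integral identities, $w$ solves
\[
	w(t)=e^{-(t-t_0)\d_x^3}w(t_0)+\mu\int_{t_0}^t e^{-(t-s)\d_x^3}\d_x\!\left(|u|^{2\alpha}u-|\tilde u|^{2\alpha}\tilde u\right)\!(s)\,ds-\int_{t_0}^t e^{-(t-s)\d_x^3}e(s)\,ds.
\]
The basic nonlinear input is the Leibniz rule for fractional derivatives \cite[Lemma~3.4]{MS1}, which gives, for any subinterval $I'\subset I$,
\[
	\norm{\,|u|^{2\alpha}u-|\tilde u|^{2\alpha}\tilde u\,}_{N(I')}\lesssim\left(\norm{u}_{S(I')}^{2\alpha}+\norm{\tilde u}_{S(I')}^{2\alpha}\right)\norm{w}_{M(I')},
\]
together with its companion with $N_\sigma(I')$ and $\norm{w}_{D_\sigma(I')}$ in place of $N(I')$ and $\norm{w}_{M(I')}$, which we will need for \eqref{tt12}.

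The core is a bootstrap over subintervals. Since $\norm{\tilde u}_{S(I)}\le M$, partition $I=\bigcup_{j=1}^J I_j$ with $J=J(M)$ so that $\norm{\tilde u}_{S(I_j)}\le\eta$ for a small absolute constant $\eta$. On $I_1$, apply the homogeneous and inhomogeneous Strichartz estimates for the Airy equation (the generalized Strichartz estimate and \cite[Proposition~2.5]{MS1}) to the Duhamel formula for $w$ to get
\[
	\norm{w}_{S(I_1)}+\norm{w}_{M(I_1)}\lesssim\varepsilon+\left(\norm{u}_{S(I_1)}^{2\alpha}+\eta^{2\alpha}\right)\norm{w}_{M(I_1)}.
\]
Since $u$ is not given a priori, I would first produce it on $I_1$ via the fixed-point scheme of Lemma~\ref{lem:lwp_pre} (with $Z=M$), which applies because $\norm{e^{-(t-t_0)\d_x^3}u(t_0)}_{S(I_1)}\le\norm{e^{-(t-t_0)\d_x^3}\tilde u(t_0)}_{S(I_1)}+\varepsilon$ is small; this yields $\norm{u}_{S(I_1)}\lesssim\eta+\varepsilon$, so the last term above is absorbed and $\norm{w}_{S(I_1)}+\norm{w}_{M(I_1)}\lesssim\varepsilon$, hence \eqref{tt11} on $I_1$ via the nonlinear estimate. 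Iterating over $j=1,\dots,J$ --- at each step the data for $w$ at the left endpoint of $I_{j+1}$ has small free evolution on $I_{j+1}$ by the bounds already proved on $I_j$ --- each step costs only a factor depending on $M$, so after $J(M)$ steps one obtains \eqref{tt10}--\eqref{tt11} on all of $I$, provided $\varepsilon_1(M)$ is taken small enough that the accumulated errors stay under the smallness threshold. The final constant depends only on $M$ since $\norm{u}_{S(I)}\le\norm{\tilde u}_{S(I)}+\norm{w}_{S(I)}\lesssim M$.

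For \eqref{tt12} I would apply $|\d_x|^\sigma$ to the Duhamel formula for $w$ and measure in $L^\infty_t(I;\hat{M}^\beta_{\gamma,\delta})$: the linear term is exactly $\norm{|\d_x|^\sigma w(t_0)}_{\hat{M}^\beta_{\gamma,\delta}}$, while the two Duhamel terms are controlled by the refined estimate of Theorem~\ref{thm:S} in its inhomogeneous form (obtained from the homogeneous one by duality and a Christ--Kiselev-type argument, as in the proof of Theorem~\ref{thm:lwp}), together with the diagonal refined estimate; they are bounded by $\norm{e}_{N(I)}+\norm{\,|u|^{2\alpha}u-|\tilde u|^{2\alpha}\tilde u\,}_{N_\sigma(I)}$, and the second term is then handled by the $\sigma$-level fractional Leibniz bound above once $\norm{w}_{D_\sigma(I)}$ is controlled by the $S\cap M$ bounds just obtained (again split over the same subintervals), giving $\lesssim\varepsilon$.

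The step I expect to be the main obstacle is organizational rather than conceptual: one must keep the number of subintervals dependent on $M$ alone while simultaneously running the continuity/fixed-point argument that \emph{produces} the solution $u$ and gives the a priori control of $\norm{u}_S$, and one must juggle the several auxiliary spaces ($S$, $M$, $L$, $N$, $N_\sigma$, $D_\sigma$) so that the fractional Leibniz rule is invoked only with acceptable and conjugate-acceptable exponents in the sense of \cite{MS1}. All of this bookkeeping is carried out in \cite{MS1,MS2}; the genuinely new point is simply that \eqref{tt12} now closes because Theorem~\ref{thm:S} supplies the non-diagonal refined estimate that was missing in \cite{MS2}.
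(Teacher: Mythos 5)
Your proposal is correct and follows essentially the same route as the paper, whose own ``proof'' simply defers to the standard continuity/perturbation argument of \cite[Lemma 3.1, Proposition 3.2]{MS2} once Theorem \ref{thm:S} is in hand. Your sketch --- subtracting the Duhamel formulas, the fractional Leibniz difference estimate, partitioning $I$ into $J(M)$ subintervals where $\|\tilde{u}\|_{S}$ is small, bootstrapping to get \eqref{tt10}--\eqref{tt11}, and then using the (diagonal and non-diagonal) refined estimates with the inhomogeneous Strichartz estimate for \eqref{tt12} --- is precisely that standard argument.
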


%%%%%%%%%%%%%%%%%%%%%%%%%%%%%%%%%%%%%%%%%%%%%%%%%%%%%%
%
%   Application to minimizing problem
%
%%%%%%%%%%%%%%%%%%%%%%%%%%%%%%%%%%%%%%%%%%%%%%%%%%%%%%

\section{Application to a minimizing problem}

\subsection{Linear profile decomposition in $|\d_x|^{-\sigma} \hat{M}^\beta_{2,\delta}$}

In this section, we establish the linear profile decomposition.
The linear profile decomposition essentially consists of two parts.
The first part is concentration compactness and the second part is the inductive procedure to obtain a decomposition.

Let us begin with the concentration compactness part.
The hat-Morrey space $\hat{M}^\alpha_{\beta,\gamma}$ is realized as a dual of a Banach space 
\cite[Theorem 2.17]{M3}.
Therefore, a bounded set of the hat-Morrey space is compact in the weak-$*$ topology.
\begin{theorem}[Concentration compactness in $|\d_x|^{-\sigma} \hat{M}^\beta_{2,\delta}$]\label{thm:cc}
Suppose that $\alpha>8/5$ and $0<\sigma < 1/4-2/(5\alpha)$.
Let $\beta,\gamma,\delta$ satisfy $1/\beta=1/\alpha + \sigma$, 
\[
	\frac4{5\alpha} + 2\sigma < \frac1\gamma < \frac1\beta, \quad\text{and}\quad
	\frac12 -\frac1{5\alpha} < \frac1\delta < \frac1{\beta'}.
\]
Let $\{u_n\}_n \subset |\d_x|^{-\sigma} \hat{M}^\beta_{\gamma,\delta}$ a bounded sequence;
\begin{equation}\label{eq:cc_bdd_asmp}
	\norm{|\d_x|^\sigma u_n}_{\hat{M}^\beta_{\gamma,\delta}} \le M
\end{equation}
for some $M>0$.
If the sequence further satisfies
\begin{equation}\label{eq:cc_additional}
	\norm{ e^{-t\d_x^3} u_n}_{L(\R) \cap S(\R)} \ge m
\end{equation}
for some $m>0$ then there exist
such that
\[
	|\d_x|^{\sigma}(T(y_n)^{-1} A(s_n)^{-1} D(N_n)^{-1} u_n) \rightharpoonup |\d_x|^{\sigma} \psi 
\]
as $n\to\I$ weakly-$*$ in $\hat{M}^{\beta}_{\gamma,\delta}$ with $\norm{\psi}_{\hat{M}^\beta_{\gamma,\delta}} \ge C(M,m)>0$.
\end{theorem}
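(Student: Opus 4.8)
The plan is to follow the standard profile-extraction scheme adapted to the hat-Morrey setting, exactly as was done for $\hat L^\alpha$ in \cite{MS1} and for Schr\"odinger hat-Morrey spaces in \cite{M3}, but now feeding in the \emph{refined} Strichartz estimate of Theorem \ref{thm:S} as the key input. First I would record the two linear bounds that Theorem \ref{thm:S} (and Theorem \ref{thm:T}) provides under the stated range of $\alpha,\sigma,\gamma,\delta$: with $\beta$ defined by $1/\beta = 1/\alpha+\sigma$,
\begin{equation*}
	\norm{e^{-t\d_x^3}u_n}_{L(\R)} + \norm{e^{-t\d_x^3}u_n}_{S(\R)} \le C\norm{|\d_x|^\sigma u_n}_{\hat M^\beta_{\gamma,\delta}} \le CM.
\end{equation*}
This uses that $(p,q)$-pairs corresponding to $L(\R)=L^{5\alpha}_x L^{5\alpha/3}_t$ and $S(\R)=L^{5\alpha/2}_x L^{5\alpha}_t$ land in the admissible region of Theorem \ref{thm:S} with the given $\gamma,\delta$; the inequalities $4/(5\alpha)+2\sigma<1/\gamma<1/\beta$ and $1/2-1/(5\alpha)<1/\delta<1/\beta'$ are precisely what make the $(\gamma,\delta)$ produced by the theorem compatible with the two chosen $(p,q)$ pairs.

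Next I would set up the dyadic pigeonholing. The definition of the $\hat M^\beta_{\gamma,\delta}$ norm is an $\ell^\delta_{j,k}$ sum over dyadic frequency intervals $\tau^j_k$ of local $L^{\gamma'}$ norms of $\hat{|\d_x|^\sigma u_n}$ with weights $|\tau^j_k|^{1/\gamma-1/\beta}$. The strategy is: from \eqref{eq:cc_additional} and a refinement of the Strichartz estimate on a single dyadic piece (which is where the \emph{gain} in the local $L^{\gamma'}$ exponent over the global $\hat L^\beta$ exponent is exploited — this is the content of the refined estimate, much as in \cite[\S2]{MS1} or \cite[Lemma 2.2]{Shao}), one deduces that a single frequency block $P_{\tau^{j_n}_{k_n}} u_n$ already carries a nontrivial fraction of the $L(\R)\cap S(\R)$ norm, i.e.\ $\gtrsim m^{\theta}M^{1-\theta}$ for some explicit $\theta\in(0,1)$ coming from the interpolation exponents hidden in the proof of Theorem \ref{thm:S}. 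Concretely one writes the Strichartz norm as a sum over blocks, applies the refined bound to each block, and uses H\"older in $\ell^\delta_{j,k}$ together with the uniform bound \eqref{eq:cc_bdd_asmp} to conclude that the $\ell^\infty$ (single-block) contribution dominates up to a power loss. This selects the dilation parameter $N_n \sim 2^{j_n}$ (rescaling the chosen block to unit frequency scale) and a frequency translation; because the state space $|\d_x|^{-\sigma}\hat M^\beta_{2,\delta}$ already contains the derivative $|\d_x|^\sigma$, the frequency translation must be absorbed into $N_n$ rather than treated as an independent modulation symmetry — this is the structural point flagged in the remark after Theorem \ref{thm:minimal}, and it is why the list of symmetries is only $T(y),A(s),D(h)$.

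Then I would extract the remaining parameters. After rescaling by $D(N_n)^{-1}$, the sequence $v_n := D(N_n)^{-1}u_n$ is still bounded in $|\d_x|^{-\sigma}\hat M^\beta_{2,\delta}$ and its block at unit scale has Strichartz norm $\gtrsim C(M,m)$. On that fixed-scale block, $|\d_x|^\sigma v_n$ is bounded in $\hat M^\beta_{\gamma,\delta}$, hence — using that $\hat M^\beta_{\gamma,\delta}$ is the dual of a Banach space \cite[Theorem 2.17]{M3} — has a weak-$*$ convergent subsequence, but before passing to the limit one must first "decouple" space-time by locating where the Strichartz mass concentrates: a standard argument (inverse Sobolev / Bernstein on the fixed-frequency block plus Duhamel-free linear flow) produces space-time centers $(s_n,y_n)$ such that $T(y_n)^{-1}A(s_n)^{-1}v_n$ still has a lower bound on a quantity controlled by the $\hat M$ norm. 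Passing to a weak-$*$ limit gives $\psi$, and the lower bound $\norm{\psi}_{\hat M^\beta_{\gamma,\delta}}\ge C(M,m)$ follows because the pairing with a suitable test function (realizing the near-extremizer of the refined estimate) is bounded below uniformly — here one uses that weak-$*$ convergence preserves the relevant linear functional and that the refined Strichartz estimate is, in effect, a statement that such a functional controls the Strichartz norm from below on the concentrated block.

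\textbf{The main obstacle} I expect is the first pigeonholing step: upgrading \eqref{eq:cc_additional}, a bound on the \emph{full} linear Strichartz norm, to a bound on a \emph{single dyadic frequency block}, with a quantitative loss depending only on $m/M$. In the $\hat L^\alpha$ case this is the Rademacher/square-function argument of \cite{MS1}; in the non-diagonal hat-Morrey case one does not have a clean square function, so one must instead chase through the bilinear/Whitney decomposition behind Theorem \ref{thm:S} and isolate the diagonal-in-scale contribution, checking that the almost-orthogonality of Proposition \ref{orthgnal} lets one sum the off-block interactions with a gain. A secondary technical point is verifying that the arithmetic of $\gamma,\delta$ under Assumption-type constraints leaves genuine room (strict inequalities) so that all H\"older steps have a small positive power to spend — this is exactly why the hypotheses here are stated with strict inequalities $4/(5\alpha)+2\sigma < 1/\gamma$ and $1/2-1/(5\alpha) < 1/\delta$ rather than the non-strict ones in Assumption \ref{A:lwp}.
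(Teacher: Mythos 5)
Your overall skeleton (refined Strichartz as input, selection of a frequency scale, extraction of $(s_n,y_n)$, weak-$*$ limit via duality, lower bound by pairing with a fixed test function) points in the right direction, but the step you yourself flag as ``the main obstacle'' is precisely the heart of the theorem, and your plan for it has a genuine gap. First, you propose to pigeonhole directly in the mixed norms $L(\R)\cap S(\R)$ over the Morrey blocks $\tau^j_k$, ``chasing through the Whitney decomposition behind Theorem \ref{thm:S} and Proposition \ref{orthgnal}.'' No such inverse-type argument is carried out, and it is not clear it can be: the almost orthogonality controls spacetime Fourier supports of products in $L^{p/2}_xL^{q/2}_t$ and gives upper bounds, not a mechanism for showing that a single block dominates the full norm. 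The paper avoids this entirely: since the hypotheses exclude endpoints, it first interpolates the non-diagonal lower bound \eqref{eq:cc_additional} against the uniform bound $M$ to get a lower bound on the \emph{diagonal} norm $\lVert |\d_x|^{1/(3\alpha)}e^{-t\d_x^3}u_n\rVert_{L^{3\alpha}_{t,x}}$, and then proves the concrete inverse inequality \eqref{eq:cc_claim}, in which only the supremum over \emph{Littlewood--Paley annuli} $P_N$ appears, raised to the power $1-\zeta/(3\alpha)$ with $\zeta=\max(\gamma',\delta)$. Its proof uses the square function, expansion into a four-fold product, H\"older with slightly perturbed exponents $\alpha_1=(1/\alpha-\eta)^{-1}$, $\alpha_4=(1/\alpha+\eta)^{-1}$, Theorem \ref{thm:S} applied to each $P_{N_k}$ piece with derivative shifts $\sigma\pm\eta$ (producing the convergent factor $(N_1/N_4)^{4\eta/3}(1+\log(N_4/N_1))^2$), and the $\ell^\zeta_N$ summability of the block norms, which is exactly where $\zeta=\max(\gamma',\delta)$ and the strict inequalities on $\gamma,\delta$ enter. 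None of this machinery is in your proposal.

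Second, your selection of a single Morrey block $\tau^{j_n}_{k_n}$ creates a frequency-\emph{center} parameter $k_n$ in addition to the scale, and your claim that ``the frequency translation must be absorbed into $N_n$'' is unjustified: a modulation $e^{ix\xi_n}$ is not among the symmetries $T,A,D$, is not an isometry of $|\d_x|^{-\sigma}\hat{M}^\beta_{\gamma,\delta}$, and cannot be converted into a dilation. If the selected block sits at $|k_n|2^{-j_n}\gg 2^{-j_n}$, rescaling alone does not recenter it, and the weak-$*$ limit you extract could be trivial. The paper's selection of an annulus $|\xi|\sim N_n$ (where position and scale are comparable) is what makes dilation alone sufficient; it then upgrades to an $L^\infty_{t,x}$ lower bound for $P_{N_n}e^{-t\d_x^3}u_n$ by interpolation against the refined estimate at exponent $3\alpha_4$, rescales, evaluates at a point to produce $(s_n,y_n)$, and passes to the weak-$*$ limit, the pointwise evaluation of the unit-frequency block being a pairing with a fixed kernel in the predual. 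So while your final extraction step is morally right, the proposal as written neither proves the needed inverse inequality nor stays within the allowed symmetry group, and these are the two places where the actual proof does real work.
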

\begin{proof}
In this proof, all spacetime integrals are taken in $\R\times\R$.
Since the endpoint cases are excluded, 
by means of Theorem \ref{thm:S} and by interpolation inequality, we see that the assumption \eqref{eq:cc_additional}
implies that
$\tnorm{|\d_x|^{1/{3\alpha}} e^{-t\d_x^3} u_n }_{L^{3\alpha}_{t,x}} \ge \tilde{m}$
for some $\tilde{m}=\tilde{m}(\alpha, \sigma,m)>0$.
Let $P_N$ be a standard cut-off operator to $|\xi|\sim N \in 2^\Z$.
We now claim the estimate
\begin{equation}\label{eq:cc_claim}
	\norm{|\d_x|^{\frac1{3\alpha}} e^{-t\d_x^3} u}_{L^{3\alpha}_{t,x}}
	\le C\(\sup_{N\in 2^\Z} 
	\norm{P_N|\d_x|^{\frac1{3\alpha}} e^{-t\d_x^3} u}_{L^{3\alpha}_{t,x}}\)^{1-\frac{\zeta}{3\alpha}}
	\norm{|\d_x|^{\sigma}u}_{\hat{M}^\beta_{\gamma,\delta}}^{\frac{\zeta}{3\alpha}},
\end{equation}
where $\zeta:= \max(\gamma',\delta)$.
By the square function estimate, we have 
\[
	\norm{|\d_x|^{\frac1{3\alpha}}e^{-t\d_x^3}u}_{L^{3\alpha}_{t,x}} \sim
	\norm{\sqrt{ \sum_{N\in 2^\Z} |P_N |\d_x|^{\frac1{3\alpha}}e^{-t\d_x^3}u|^2 }}_{L^{3\alpha}_{t,x}},
\]
We consider only the case $6<3\alpha\le8$, the other cases are similar. As $3\alpha/8\le1$,
\begin{align*}
	%\norm{\sqrt{ \sum_{N\in 2^\Z} |P_N |\d_x|^{\frac1{3\alpha}}e^{-t\d_x^3}u_n|^2 }}_{L^{3\alpha}_{t,x}}^{3\alpha}
	\text{(R.H.S of (\ref{eq:cc_claim}))}^{3\alpha}
	&= \iint \prod_{k=1}^4 \(\sum_{N_k\in 2^\Z} |P_{N_k} |\d_x|^{\frac1{3\alpha}}e^{-t\d_x^3}u|^2\)^{\frac{3\alpha}{8}}  dxdt\\
% 	&{}\le \iint \prod_{k=1}^4 \(\sum_{N_k\in 2^\Z} |P_{N_k} |\d_x|^{\frac1{3\alpha}}e^{-t\d_x^3}u|^{\frac{3\alpha}4} \)  dxdt\\
	&{}\le C \sum_{N_1 \le N_2 \le N_3\le N_4} \iint \prod_{k=1}^4 |P_{N_k} |\d_x|^{\frac1{3\alpha}}e^{-t\d_x^3}u|^{\frac{3\alpha}4}  dxdt.
\end{align*}
Let $\eta>0$ be a small number and let $\alpha_1=(\frac1\alpha - \eta)^{-1}$ and $\alpha_4=(\frac1\alpha + \eta)^{-1}$. 
% Let $\zeta := \max(\gamma', \delta)$.
Remark that $2<\zeta < 3\alpha/2$.
It follows from the H\"older inequality that
\begin{align*}
&\iint \prod_{k=1}^4 |P_{N_k} |\d_x|^{\frac1{3\alpha}}e^{-t\d_x^3}u|^{\frac{3\alpha}4}  dxdt \\
&{}\le 
% \norm{P_{N_4} |\d_x|^{\frac1{3\alpha}}e^{-t\d_x^3}u_n}_{L^{3\alpha_{4}}_{t,x}}^{\frac{\zeta}2}\\
 \(\sup_{N\in2^\Z} \norm{P_{N} |\d_x|^{\frac1{3\alpha}}e^{-t\d_x^3}u}_{L^{3\alpha}_{t,x}}\)^{3\alpha-\zeta}
\prod_{k=1,4} \norm{P_{N_k} |\d_x|^{\frac1{3\alpha}}e^{-t\d_x^3}u}_{L^{3\alpha_{k}}_{t,x}}^{\frac{\zeta}2}.
\end{align*}
By Theorem \ref{thm:S}, we have
\begin{eqnarray}
	\norm{P_{N_k} |\d_x|^{\frac1{3\alpha}}e^{-t\d_x^3}u}_{L^{3\alpha_{j}}_{t,x}}
	&\le&C N_{k}^{\frac{1}{3}(\frac1\alpha-\frac1{\alpha_k})} \norm{ P_{N_j}|\d_x|^{\sigma_{k}}u}_{\hat{M}^{\beta}_{\gamma,\delta}}
	\label{eq:cc_pf1}\\
	&\le& C N_{k}^{\frac{4}{3}(\frac1\alpha-\frac1{\alpha_k})} \norm{ P_{N_j}|\d_x|^{\sigma}u}_{\hat{M}^{\beta}_{\gamma,\delta}}\nonumber
\end{eqnarray}
for $k=1,4$, where $\sigma_k$, $\gamma_k$, $\delta_k$ are chosen by the relations
\[
	\frac{1}{\alpha_k}= \frac1\beta - \sigma_j=\frac1\alpha + (\sigma-\sigma_k), \quad
	\frac1{\gamma_k} = \frac1\beta - \frac1{3\alpha_k} \le \frac1\gamma,\quad
	\frac1{\delta_k} = \frac12 - \frac1{3\alpha_k} \le \frac1\delta.
\]
Remark that $\sigma_1 = \sigma+\eta$ and $\sigma_4 = \sigma-\eta$, and so that
the choice is possible if $\eta>0$ is sufficiently small.
Put $a_{N}:=\norm{ P_{N}|\d_x|^{\sigma}u}_{\hat{M}^{\beta}_{\gamma,\delta}}$ for $N\in 2^\Z$.
Combining these inequalities, we reach to the estimate
\begin{align*}
% \sum_{N_1 \le N_2 \le N_3\le N_4} \iint \prod_{k=1}^4 |P_{N_k} |\d_x|^{\frac1{3\alpha}}e^{-t\d_x^3}u_n|^{\frac{3\alpha}{4}}  dxdt\\
\norm{|\d_x|^{\frac1{3\alpha}} e^{-t\d_x^3} u}_{L^{3\alpha}_{t,x}}
\le{}& C \(\sup_{N\in2^\Z} \norm{P_{N} |\d_x|^{\frac1{3\alpha}}e^{-t\d_x^3}u}_{L^{3\alpha}_{t,x}}\)^{3\alpha-\zeta}\\
&{}\times\sum_{N_1 \le N_4} a_{N_1}^{\frac{\zeta}2} a_{N_4}^{\frac{\zeta}2} \( \frac{N_1}{N_4}\)^{\frac{4\eta}3}\(1+\log \frac{N_4}{N_1}\)^2.
\end{align*}
Thus, the claim \eqref{eq:cc_claim} follows because
$\norm{a_N}_{\ell_N^\zeta} \le C \norm{|\d_x|^{\sigma}u}_{\hat{M}^{\beta}_{\gamma,\delta}}$%\le  C M$
by definition of $\zeta$.

By means of the claim, assumption of the theorem implies that there exists a sequence $\{N_n\} \subset 2^\Z$ such that
\[
	|N_n|^{\frac1{3\alpha}} \norm{P_{N_n} e^{-t\d_x^3}u_n}_{L^{3\alpha}_{t,x}} \ge C(M,m).
\]
As in \eqref{eq:cc_pf1},
\ALN{
	\norm{P_{N_n} e^{-t\d_x^3}u_n}_{L^{3\alpha}_{t,x}}
	&{}\le \norm{P_{N_n} e^{-t\d_x^3}u_n}_{L^{\I}_{t,x}}^{1-\theta}
	\norm{P_{N_n} e^{-t\d_x^3}u_n}_{L^{3\alpha_4}_{t,x}}^{\theta}\\
	&{}\le C \norm{P_{N_n} e^{-t\d_x^3}u_n}_{L^{\I}_{t,x}}^{1-\theta}
	\( M {N_n}^{-\frac1{3\alpha}-\frac{4\eta}3}\)^{\theta},
}
where $\theta = \alpha_4/\alpha = (1+\eta \alpha)^{-1}$.
We obtain
\[
	(N_n)^{-\frac{1}{\alpha}} \norm{e^{-t\d_x^3}P_{N_n} u_n}_{L^{\I}_{t,x}} \ge C(M,m).
\]
Set $v_n(x) := (N_n)^{1/\alpha} u_n(N_nx)$
% Then, $\supp \F v_n \subset \{ 1/2 \le |\xi| \le 2\}$.
% There exists dyadic interval $I$ with $|I|=1$ and $\dist(0,|I|) \in [1,3]$ such that
to obtain $\tnorm{P_{1} e^{-t\d_x^3} v_n}_{L^{\I}_{t,x}} \ge C(M,m)$.
Hence, there exists $(s_n,y_s) \in \R^2$ such that
\EQ{\label{eq:cc_pf2}
	|P_1 e^{{s_n}\d_x^3}v_n|(-y_n) \ge C(M,m).
}
Let $\psi \in |\d_x|^{-\sigma}\hat{M}^\beta_{\gamma,\delta}$ be a 
weak-$*$ limit of $T(-y_n)e^{{s_n}\d_x^3}v_n$ along a subsequence.
Then, by a standard argument, we conclude from \eqref{eq:cc_pf2} that $\norm{|\d_x|^\sigma \psi}_{\hat{M}^\beta_{\gamma,\delta}} \ge \beta(M,m)$.
\end{proof}

We next move to the main issue of this section, linear profile decomposition.
Let us define a set of deformations as follows
\begin{equation}\label{eq:symG}
	G := \{ D(N) A(s) T(y) \ |\ \Gamma = (N, s, y) \in 2^\Z \times
\R\times \R \}.
\end{equation}
We often identify $\mathcal{G} \in G$ with
a corresponding parameter $\Gamma \in 2^\Z \times\R\times \R$
if there is no fear of confusion.
Let us now introduce a notion of orthogonality between two families of deformations.

\begin{definition}\label{def:orthty}
We say two families of deformations $\{\mathcal{G}_n\} \subset G$ and 
$\{ \widetilde{\mathcal{G}}_n \} \subset G$ are \emph{orthogonal} if
corresponding parameters $\Gamma_n ,\widetilde{\Gamma}_n 
\in 2^\Z \times \R\times \R$ satisfies 
\EQ{\label{eq:def_orthty}
	\lim_{n\to\I} \bigg(\abs{\log \frac{N_n}{\widetilde{N}_n}} 
	+ \abs{ s_n -\( \frac{N_n}{\widetilde{N}_n}\)^3\widetilde{s}_n} 
	+ \abs{y_n- \frac{N_n}{\widetilde{N}_n} \widetilde{y}_n } \bigg)= +\I.
}
\end{definition}

\begin{theorem}[Linear profile decomposition in $|\d_x|^{-\sigma}\hat{M}^{\beta}_{2,\delta}$]\label{thm:lpd}
Suppose that $\alpha$, $\sigma$, $\beta$, $\gamma$, and $\delta$ satisfy Assumption \ref{A:min}.
Let $\{u_n\}_n$ be a bounded sequence in $|\d_x|^{-\sigma}\hat{M}^{\beta}_{2,\delta}$.
Then, there exist $\psi^j \in |\d_x|^{-\sigma}\hat{M}^{\beta}_{2,\delta}$,
$r_n^j \in |\d_x|^{-\sigma}\hat{M}^{\beta}_{2,\delta}$, and 
pairwise orthogonal families of deformations $\{\mathcal{G}^j_n\}_n \subset G$ 
($j=1,2,\dots$) parametrized by $\{ \Gamma_n^j = (h_n^j, s_n^j , y_n^j) \}_n$
such that, extracting a subsequence in $n$,
\begin{equation}\label{eq:lpd_decomp}
	u_n = \sum_{j=1}^J \mathcal{G}^j_n \psi^j + r_n^J
\end{equation}
for all $n,J\ge1$ and
\begin{equation}\label{eq:lpd_small}
	\lim_{J\to\I}\varlimsup_{n\to\I} \(
	\norm{ |\d_x|^{\frac1{3\alpha}} e^{-t\d_x^3} r_n^J }_{L^{3\alpha}_{t,x}(\R \times \R)}
	+ \norm{e^{-t\d_x^3} r_n^J }_{L^{\frac{5\alpha}2}_t L^{5\alpha}_{x}(\R \times \R)} \)= 0
\end{equation}
Moreover, a decoupling inequality
\begin{equation}\label{eq:lpd_Pyth}
	\varlimsup_{n\to\I} \norm{|\d_x|^\sigma u_n}_{\hat{M}^{\beta}_{2,\delta}}^\delta 
	\ge \sum_{j=1}^J \norm{|\d_x|^\sigma \psi^j}_{\hat{M}^\beta_{2,\delta}}^\delta 
	+ \varlimsup_{n\to\I} \norm{r_n^J}_{\hat{M}^\beta_{2,\delta}}^\delta 
\end{equation}
holds for all $J\ge1$.
Furthermore, if $u_n$ is real-valued then so are $\psi^j$ and $r_n^J$.
\end{theorem}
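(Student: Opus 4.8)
The plan is to run the standard inductive extraction scheme for linear profile decompositions, using the concentration compactness result Theorem~\ref{thm:cc} as the engine at each step (its hypotheses being contained in Assumption~\ref{A:min} with $\gamma=2$), and carrying along two pieces of bookkeeping: pairwise orthogonality of the deformations and a Pythagorean decoupling of the $\hat{M}^\beta_{2,\delta}$-norm. Concretely, I would set $r_n^0:=u_n$, $M:=\varlimsup_{n\to\I}\norm{|\d_x|^\sigma u_n}_{\hat{M}^\beta_{2,\delta}}$, and, assuming $r_n^{J-1}$ has been built with $\varlimsup_{n\to\I}\norm{|\d_x|^\sigma r_n^{J-1}}_{\hat{M}^\beta_{2,\delta}}\le M$ (furnished inductively by the decoupling below), put $m_J:=\varlimsup_{n\to\I}\norm{e^{-t\d_x^3}r_n^{J-1}}_{L(\R)\cap S(\R)}$. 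If $m_J=0$ I stop and set $\psi^j:=0$, $\mathcal{G}^j_n:=\mathrm{id}$ for $j\ge J$. If $m_J>0$ I pass to a subsequence in $n$ realizing the $\varlimsup$ and apply Theorem~\ref{thm:cc} to $\{r_n^{J-1}\}_n$, obtaining $\{\mathcal{G}^J_n\}_n\subset G$ and $\psi^J\in|\d_x|^{-\sigma}\hat{M}^\beta_{2,\delta}$ with $|\d_x|^\sigma(\mathcal{G}^J_n)^{-1}r_n^{J-1}\rightharpoonup|\d_x|^\sigma\psi^J$ weakly-$*$ and $\norm{|\d_x|^\sigma\psi^J}_{\hat{M}^\beta_{2,\delta}}\ge c(M,m_J)>0$, where $c(M,\cdot)$ is nondecreasing (as one reads off from the proof of Theorem~\ref{thm:cc}). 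Then $r_n^J:=r_n^{J-1}-\mathcal{G}^J_n\psi^J$, so \eqref{eq:lpd_decomp} holds by construction, and a diagonalization in $n$ makes all subsequence passages simultaneous. Since only weak-$*$ limits, subtractions, and the deformations $D(N)A(s)T(y)$ (which commute with complex conjugation) intervene, real-valued $u_n$ yields real-valued $\psi^j$ and $r_n^J$, giving the last assertion.

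Next I would prove, by induction on $J$, the package: (i) $|\d_x|^\sigma(\mathcal{G}^j_n)^{-1}r_n^J\rightharpoonup0$ weakly-$*$ as $n\to\I$ for all $j\le J$; (ii) $\{\mathcal{G}^j_n\}_n$, $j\le J$, are pairwise orthogonal in the sense of Definition~\ref{def:orthty}; (iii) $\varlimsup_n\norm{|\d_x|^\sigma r_n^{J-1}}_{\hat{M}^\beta_{2,\delta}}^\delta\ge\norm{|\d_x|^\sigma\psi^J}_{\hat{M}^\beta_{2,\delta}}^\delta+\varlimsup_n\norm{|\d_x|^\sigma r_n^J}_{\hat{M}^\beta_{2,\delta}}^\delta$. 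For (ii): were $\{\mathcal{G}^{j_1}_n\}$, $\{\mathcal{G}^{j_2}_n\}$ with $j_1<j_2$ not orthogonal, then $(\mathcal{G}^{j_2}_n)^{-1}\mathcal{G}^{j_1}_n$ would subconverge to a fixed element of $G$; feeding $r_n^{j_1-1}=\mathcal{G}^{j_1}_n\psi^{j_1}+r_n^{j_1}$ through $(\mathcal{G}^{j_2}_n)^{-1}$ and using (i) at level $j_1$ would force $\psi^{j_2}=0$, impossible. Then (i) at level $J$ follows from (i) at lower levels plus the elementary fact that a deformation whose parameters diverge as in \eqref{eq:def_orthty} sends a fixed vector of $|\d_x|^{-\sigma}\hat{M}^\beta_{2,\delta}$ weakly-$*$ to $0$ (dispersive decay of the Airy group for $|s|\to\I$; physical/frequency translation for $|y|,|\log N|\to\I$). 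For (iii): since $\gamma=2$ the $\hat{M}^\beta_{2,\delta}$-norm is the $\ell^\delta_{j,k}$-aggregate of the \emph{local $L^2$} masses $\norm{\widehat{|\d_x|^\sigma v}}_{L^2(\tau^j_k)}$; on each fixed dyadic block weak convergence in the Hilbert space $L^2(\tau^j_k)$ yields the Pythagorean identity in the limit, and since Assumption~\ref{A:min} forces $\delta>2$, the inequality $(a+b)^{\delta/2}\ge a^{\delta/2}+b^{\delta/2}$ lets one sum the blocks---this is exactly the device of \cite{MS2}. Telescoping (iii) in $J$ produces \eqref{eq:lpd_Pyth}, hence $\sum_{j\ge1}\norm{|\d_x|^\sigma\psi^j}_{\hat{M}^\beta_{2,\delta}}^\delta\le M^\delta$; in particular $\norm{|\d_x|^\sigma\psi^j}_{\hat{M}^\beta_{2,\delta}}\to0$ and $\varlimsup_n\norm{|\d_x|^\sigma r_n^J}_{\hat{M}^\beta_{2,\delta}}\le M$ for every $J$, closing the loop with the extraction step.

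It then remains to verify \eqref{eq:lpd_small}. From $c(M,m_J)\le\norm{|\d_x|^\sigma\psi^J}_{\hat{M}^\beta_{2,\delta}}\to0$ and the monotonicity of $c(M,\cdot)$ we get $m_J\to0$ (if the scheme halted at a finite $J_0$ then $m_J=0$ for $J\ge J_0$ and there is nothing to prove), i.e.\ $\lim_{J\to\I}\varlimsup_{n\to\I}\norm{e^{-t\d_x^3}r_n^J}_{L(\R)\cap S(\R)}=0$. Since $\norm{|\d_x|^\sigma r_n^J}_{\hat{M}^\beta_{2,\delta}}\le M$, interpolating with Theorem~\ref{thm:S} (as at the start of the proof of Theorem~\ref{thm:cc}, where the exclusion of the endpoints in Assumption~\ref{A:min} is used) controls the two space-time norms on the left-hand side of \eqref{eq:lpd_small} by $\norm{e^{-t\d_x^3}r_n^J}_{L(\R)\cap S(\R)}$ up to a factor depending only on $M$; letting $J\to\I$ gives \eqref{eq:lpd_small}.

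The real work is in the orthogonality/decoupling step: arranging the Pythagorean decoupling to survive \emph{simultaneously} the weak-$*$ passage to the limit---where the local-$L^2$ structure, i.e.\ the restriction $\gamma=2$, is indispensable---and the non-Hilbertian $\ell^\delta$ aggregation over the dyadic blocks, while interlocking this with the inductive proof of pairwise orthogonality of the deformations. The genuinely analytic gain (weak-$*$ compactness of bounded subsets of $\hat{M}^\beta_{2,\delta}$, combined with the refined estimate Theorem~\ref{thm:S}) has already been packaged into Theorem~\ref{thm:cc}, so the extraction and the remainder estimate are essentially organizational.
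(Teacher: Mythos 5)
Your argument is correct and rests on the same two ingredients as the paper's proof---the concentration compactness result Theorem \ref{thm:cc} and the local-$L^{2}$/$\ell^{\delta}$ (with $\delta>2$) decoupling device of \cite{MS2}---but it organizes the extraction differently. The paper follows \cite{MS2}: it introduces the functional $\eta(\{u_n\})$, the supremum of $\hat{M}^{\beta}_{2,\delta}$-norms of weak-$*$ limits of $\mathcal{G}_n^{-1}u_n$ over all deformation sequences, extracts at each stage a near-maximizer of $\eta(\{r_n^{J-1}\})$, obtains the decomposition with the smallness statement $\eta(\{r_n^{J}\})\to 0$, and invokes Theorem \ref{thm:cc} only once, at the very end and in contrapositive form, to upgrade this to \eqref{eq:lpd_small}. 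You instead let the linear norm $m_J=\varlimsup_n\norm{e^{-t\d_x^3}r_n^{J-1}}_{L(\R)\cap S(\R)}$ drive the induction and apply Theorem \ref{thm:cc} at every step; the decoupling then forces $\norm{|\d_x|^\sigma\psi^{J}}_{\hat{M}^{\beta}_{2,\delta}}\to 0$, and to deduce $m_J\to0$ you invoke quasi-monotonicity of $C(M,\cdot)$, which indeed can be read off from the proof of Theorem \ref{thm:cc} and can in any case be bypassed by applying that theorem with a fixed threshold $\eps$ along any subsequence of $J$ with $m_J\ge\eps$ and contradicting the summability $\sum_j\norm{|\d_x|^\sigma\psi^{j}}_{\hat{M}^{\beta}_{2,\delta}}^{\delta}\le M^{\delta}$. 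Both routes end with the same interpolation observation (the exponents and derivative weight of the $L^{3\alpha}_{t,x}$ norm are exact interpolants of those of $L$ and $S$, and the second norm in \eqref{eq:lpd_small} is the $S$-type norm, so smallness in $L(\R)\cap S(\R)$ suffices), and both defer the genuinely technical points---the weak-$*$ vanishing of $(\mathcal{G}^{j}_n)^{-1}\mathcal{G}^{j'}_n\psi$ under \eqref{eq:def_orthty}, and the blockwise $L^{2}$ Pythagorean identity aggregated in $\ell^{\delta}$ uniformly over infinitely many dyadic blocks as $n\to\I$---to the argument of \cite{MS2}, which you correctly single out as the real work. Net effect: your scheme is self-contained at the level of the inductive construction but uses Theorem \ref{thm:cc} quantitatively at every stage, whereas the paper's version is shorter because the induction (run on $\eta$ rather than on the Strichartz norm) is outsourced to \cite{MS2} and Theorem \ref{thm:cc} enters only in the final upgrading step.
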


\begin{proof}[Proof of Theorem \ref{thm:lpd}] 
For a sequence $\{u_n\}_n\subset |\d_x|^{-\sigma}\hat{M}^{\beta}_{2,\delta}$, define 
\[
	\mathcal{M}(\{u_n\}) := \left\{ \psi \in |\d_x|^{-\sigma}\hat{M}^{\beta}_{2,\sigma} \left|
	\ALNd{
	&\exists \mathcal{G}_n \in G,\, \exists n_k\text{: subsequence s.t.}\\
	&|\d_x|^\sigma \mathcal{G}_{n_k}^{-1} u_{n_k} \rightharpoonup |\d_x|^\sigma \psi \text{ weakly-}*\text{ in }
	\hat{M}^\beta_{2,\delta}.
	}
	\right.\right\}
\]
and $\eta(\{u_n\}) := \sup_{\phi \in \mathcal{M}(\{u_n\})} \tnorm{|\d_x|^\beta \phi}_{\hat{M}^\beta_{2,\delta}}$.
Arguing as in \cite{MS2}, one obtains the desired decomposition
expect that the smallness \eqref{eq:lpd_small} is replaced by
$\lim_{J\to\I} \eta(\{r_n^J\}) = 0$.
However, Theorem \ref{thm:cc} implies that this smallness is a stronger one than \eqref{eq:lpd_small}.
\end{proof}
\begin{remark}
In the previous result \cite{MS2}, decomposition of sequences of real valued functions have a special structure.
However, here it does not.
This is because translation in Fourier side is removed by the boundedness in $|\d_x|^{-\sigma} \hat{M}^\beta_{2,\delta}$.
\end{remark}

\subsection{Proof of Theorem \ref{thm:minimal}}
Let us begin with the analysis of $E_1$.

\begin{proof}[Proof of Theorem \ref{thm:minimal}]
We first take a minimizing sequence
$\{u_n(t), t_n \}_n \subset |\d_x|^{-\sigma} \hat{M}^{\beta}_{2,\delta} \times \R$ as follows;
$t_n \in I_{\max}(u_n)$ and
\begin{eqnarray}
	\tnorm{u_n}_{S([t_n,T_{\max}))} = \I,\quad
	\tnorm{|\d_x|^{\sigma} u_n(t_n)}_{\hat{M}^{\beta}_{2,\delta}} \le E_1 + \frac1n.\label{mins}
\end{eqnarray}
By time translation symmetry, we may suppose that $t_n\equiv0$.
We apply the linear profile decomposition theorem (Theorem \ref{thm:lpd}) 
to the sequence $\{u_n(0)\}_n$.
Then, up to subsequence, we obtain a decomposition
\[
	u_n(0) = \sum_{j=1}^J \mathcal{G}^j_{n} \psi^j + r_n^J
\]
for $n,J\ge1$ with the properties \eqref{eq:lpd_small}, \eqref{eq:lpd_Pyth}, and pairwise orthogonality of $\{\mathcal{G}_n^j\}_n \subset G$.
By extracting subsequence and changing notations if necessary, we may assume that
for each $j$ and $\{x_n^j\}_{n,j} = \{\log N_n^j\}_{n,j}, \{s_n^j\}_{n,j}$, $\{y_n^j\}_{n,j}$,
either $x_n^j \equiv 0$, $x_n^j \to \I$ as $n\to\I$, or 
$x_n^j \to -\I$ as $n\to\I$ holds.
Let us define a nonlinear profile $\Psi^j (t)$ associated with $(\psi^j,s_n^j)$ as follows:
For each $j$, we let
\begin{itemize}
\item if $s_n^j\equiv0$ then $\Psi^j (t)$ is a solution to \eqref{gKdV} with $\Psi^j(0) = \psi^j$;
\item if $s_n^j \to \I$ as $n\to \I$ then $\Psi^j (t)$ is a solution to \eqref{gKdV}
that scatters forward in time to $e^{-t\d_x^3} \\psi^j$;
\item if $s_n^j \to -\I$ as $n\to \I$ then $\Psi^j (t)$ is a solution to \eqref{gKdV}
that scatters backward in time to $e^{-t\d_x^3} \psi^j$;
\end{itemize}
Let 
\EQ{\label{eq:Vnj}
V_n^j(t) := D(N_n^j) T(y_n^j) \Psi^j ((N_n^j)^3t + s_n^{j}).
}
Here, we define an approximate solution
\EQ{
	\tilde{u}_n^J (t,x)= \sum_{j=1}^J V_n^j(t,x) + e^{-t\d_x^3} r_n^J.
	\label{eq:tunj}
}

The main step is to show that there exists $\Psi^j$ that does not scatter forward in time.
Suppose not. Then, all $\Psi^j$ scatters forward in time and so
$\norm{ |\d_x|^\sigma \psi^j}_{\hat{M}^\beta_{2,\delta}} < E_1$ for all $j$.
Then, there exists $M>0$ such that 
\begin{equation}\label{eq:boundKZ}
	\norm{ V_n^j }_{L(\rre_{+})} +
	\norm{ V_n^j }_{S(\rre_{+})} \le M
\end{equation}
holds for any $j,n \ge 1$. 

We shall observe that $\tilde{u}_n^J$ is an approximately solves \eqref{gKdV} and that
is close to $u_n$.
To this end, we provide three intermediate results.

\begin{proposition}[Asymptotic agreement at the initial time]\label{prop:pf:ls1} 
Let $\tilde{u}_{n}^{J}$ be given by \eqref{eq:tunj}. 
Then, it holds for any $J\ge 1$ that
\[
	\norm{\tilde{u}_n^J(0) - u_n(0)}_{\hat{M}^{\alpha}_{2,\sigma}} \to 0
\]
as $n\to\I$.
\end{proposition}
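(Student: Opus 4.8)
The plan is to recall the profile decomposition \eqref{eq:lpd_decomp} at $J$ fixed, namely $u_n(0) = \sum_{j=1}^J \mathcal{G}^j_n \psi^j + r_n^J$, and compare it term by term with $\tilde u_n^J(0) = \sum_{j=1}^J V_n^j(0) + e^{-0\cdot\d_x^3}r_n^J = \sum_{j=1}^J V_n^j(0) + r_n^J$. Since the remainder terms agree exactly, it suffices to show that, for each fixed $j$,
\[
	\norm{V_n^j(0) - \mathcal{G}^j_n \psi^j}_{\hat M^\beta_{2,\delta}} \to 0 \quad (n\to\I),
\]
and then sum over $j=1,\dots,J$ by the triangle inequality (the sum is finite, $J$ being fixed, so no issue of uniformity in $J$ arises here). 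Note that we should really be working in $|\d_x|^{-\sigma}\hat M^\beta_{2,\delta}$, i.e. estimating $\norm{|\d_x|^\sigma(\cdot)}_{\hat M^\beta_{2,\delta}}$; I will write the argument that way. (The statement as printed refers to $\hat M^\alpha_{2,\sigma}$, which I read as a typo for $|\d_x|^{-\sigma}\hat M^\beta_{2,\delta}$.)

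The key step is to unwind the definitions. By \eqref{eq:Vnj}, $V_n^j(0) = D(N_n^j)T(y_n^j)\Psi^j(s_n^j)$, while $\mathcal{G}^j_n\psi^j = D(N_n^j)A(s_n^j)T(y_n^j)\psi^j$. Since $D(N)$ and $T(y)$ act isometrically on $|\d_x|^{-\sigma}\hat M^\beta_{2,\delta}$ (as noted after \eqref{eq:symG} and in the list of deformations), pulling them out reduces the claim to
\[
	\norm{|\d_x|^\sigma\big(\Psi^j(s_n^j) - e^{-s_n^j\d_x^3}\psi^j\big)}_{\hat M^\beta_{2,\delta}} \to 0.
\]
This I would verify by the three cases in the definition of the nonlinear profile $\Psi^j$. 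If $s_n^j\equiv 0$, then $\Psi^j(0)=\psi^j$ and the difference is identically zero. If $s_n^j\to\pm\I$, then $\Psi^j$ is by construction the solution scattering (forward, resp. backward) to $e^{-t\d_x^3}\psi^j$, which by Definition \ref{def:scattering} means precisely that $|\d_x|^\sigma e^{s\d_x^3}\Psi^j(s) \to |\d_x|^\sigma \psi^j$ in $\hat M^\beta_{2,\delta}$ as $s\to\pm\I$; applying the isometry $e^{-s_n^j\d_x^3}$ and taking $s=s_n^j$ gives the claim. (One needs here that $\{s_n^j\}$ is one of the three canonical types, which was arranged by passing to a subsequence in the proof of Theorem \ref{thm:minimal}.)

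The only genuinely delicate point is the existence and well-definedness of the scattering nonlinear profiles $\Psi^j$ in the cases $s_n^j\to\pm\I$: one must know that an $|\d_x|^{-\sigma}\hat M^\beta_{2,\delta}$-solution scattering to a prescribed linear profile exists. This is the standard "construction of wave operators" and follows by running the local theory (Theorem \ref{thm:lwp}, Corollary \ref{cor:existence}) backwards from $t=\pm\I$: for $|t|$ large, $\norm{e^{-t\d_x^3}\psi^j}_{S([t,\I))\cap L([t,\I))}$ is small by the linear estimate \eqref{eq:linear1} together with the density of nice functions, so Lemma \ref{lem:lwp_pre} produces a solution on a neighborhood of infinity, and then the fixed-point/stability machinery extends it and shows it scatters to $e^{-t\d_x^3}\psi^j$. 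Once these profiles are in hand, the proposition is just bookkeeping with isometries and the definition of scattering. I would present the proof in the order: (1) reduce to the per-$j$ estimate via the exact match of remainders; (2) strip off $D(N_n^j)$, $T(y_n^j)$ by isometry; (3) dispatch the three cases for $s_n^j$, invoking the definition of $\Psi^j$; (4) sum over $j$.
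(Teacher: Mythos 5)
Your argument is correct and is essentially the paper's own: the paper disposes of this proposition with the single remark that it is ``obvious by definition of $V_n^j$,'' and your write-up is exactly the natural expansion of that remark — matching the remainders, stripping the isometries $D(N_n^j)$, $T(y_n^j)$ (and commuting $A(s_n^j)$ with $T(y_n^j)$), and treating the three cases of $s_n^j$ via the definition of the nonlinear profiles $\Psi^j$, whose existence in the scattering cases is part of the setup in the proof of Theorem \ref{thm:minimal}. Your reading of the norm in the statement as the state-space norm $\lVert |\d_x|^{\sigma}\,\cdot\,\rVert_{\hat{M}^{\beta}_{2,\delta}}$ (a typo in the paper) is also the intended one.
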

\begin{proposition}[Uniform bound on the approximate solution]\label{prop:pf:ls2}
There exists $M>0$ such that 
\begin{eqnarray}
	\norm{ \tilde{u}_n^J }_{L(\rre_{+})} +
	\norm{ \tilde{u}_n^J }_{S(\rre_{+})}
	\le M\label{u0}
\end{eqnarray}
holds for any $J \ge 1$ and $n \ge N(J)$.
\end{proposition}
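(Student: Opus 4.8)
The plan is to run the standard perturbative recombination of the profiles (cf.\ \cite{KM,MS2}): isolate the finitely many ``large'' profiles, on which the a~priori bound \eqref{eq:boundKZ} applies, treat the remaining ``small'' profiles by the small-data well-posedness of Lemma \ref{lem:lwp_pre}, and glue everything together using the pairwise orthogonality of the deformations $\{\mathcal{G}_n^j\}$. Since \eqref{u0} only asserts boundedness (not smallness), the orthogonality is needed merely to absorb the cross terms produced when one estimates the $L(\rre_{+})$- and $S(\rre_{+})$-norms of the finite sum $\sum_{j=1}^{J}V_n^j$, $V_n^j$ as in \eqref{eq:Vnj}.

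First, by the decoupling inequality \eqref{eq:lpd_Pyth} and the boundedness of $\{u_n(0)\}$ in $|\d_x|^{-\sigma}\hat{M}^{\beta}_{2,\delta}$ (which follows from \eqref{mins}), the series $\sum_{j\ge1}\norm{|\d_x|^\sigma\psi^j}_{\hat{M}^{\beta}_{2,\delta}}^\delta$ converges. Hence, for a threshold $\eps>0$ there is $J_0=J_0(\eps)$ with $\sum_{j>J_0}\norm{|\d_x|^\sigma\psi^j}_{\hat{M}^{\beta}_{2,\delta}}^\delta<\eps^\delta$. For $j>J_0$, with $\eps$ small, Theorem \ref{thm:S} gives $\norm{e^{-t\d_x^3}\psi^j}_{L(\rre)\cap S(\rre)}\le C\norm{|\d_x|^\sigma\psi^j}_{\hat{M}^{\beta}_{2,\delta}}<C\eps$, so by Lemma \ref{lem:lwp_pre} the nonlinear profile $\Psi^j$ is global and $\norm{\Psi^j}_{L(\rre)}+\norm{\Psi^j}_{S(\rre)}\le C\norm{|\d_x|^\sigma\psi^j}_{\hat{M}^{\beta}_{2,\delta}}$. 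Because the $L$- and $S$-norms are invariant under the deformations in $G$ (the time shift $s_n^j$ only restricts the time interval, so $\norm{V_n^j}_{S(\rre_{+})}=\norm{\Psi^j}_{S((s_n^j,\I))}\le\norm{\Psi^j}_{S(\rre)}$, and likewise for $L$), the same bounds pass to $V_n^j$ for $j>J_0$. For $1\le j\le J_0$, the uniform bound $\norm{V_n^j}_{L(\rre_{+})}+\norm{V_n^j}_{S(\rre_{+})}\le M$ is precisely \eqref{eq:boundKZ}.

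Next I would establish an almost-orthogonality estimate in the mixed-norm spaces: for a suitable finite exponent $q_0$ (one may take $q_0\ge\delta$) and each fixed $J$,
\begin{equation*}
	\Bigl\lVert\sum_{j=1}^{J}V_n^j\Bigr\rVert_{S(\rre_{+})}^{q_0}\le\sum_{j=1}^{J}\norm{V_n^j}_{S(\rre_{+})}^{q_0}+o_n(1),\qquad
	\Bigl\lVert\sum_{j=1}^{J}V_n^j\Bigr\rVert_{L(\rre_{+})}^{q_0}\le\sum_{j=1}^{J}\norm{V_n^j}_{L(\rre_{+})}^{q_0}+o_n(1),
\end{equation*}
where the orthogonality relation \eqref{eq:def_orthty} forces the spacetime profiles of $V_n^j$ and $V_n^k$ to separate for $j\ne k$, so that the mixed products contributing to the error tend to $0$ as $n\to\I$. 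Combined with the two previous paragraphs, $\sum_{j=1}^{J}\norm{V_n^j}_{S(\rre_{+})}^{q_0}\le J_0M^{q_0}+C\sum_{j>J_0}\norm{|\d_x|^\sigma\psi^j}_{\hat{M}^{\beta}_{2,\delta}}^{q_0}\le J_0M^{q_0}+C$ uniformly in $n$ and $J$ (using $\ell^\delta\hookrightarrow\ell^{q_0}$), and similarly for $L(\rre_{+})$. Finally the linear remainder contributes $\norm{e^{-t\d_x^3}r_n^J}_{L(\rre)\cap S(\rre)}\le C\norm{|\d_x|^\sigma r_n^J}_{\hat{M}^{\beta}_{2,\delta}}\le CM$ by Theorem \ref{thm:S} together with \eqref{eq:lpd_Pyth} (or, more crudely, \eqref{eq:lpd_small} for fixed $J$). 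Adding the three contributions and absorbing the $o_n(1)$ for $n\ge N(J)$ yields \eqref{u0}.

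The hard part will be the almost-orthogonality step. Unlike the $L^2$-based setting, $L(\rre_{+})$ and $S(\rre_{+})$ are non-Hilbertian, non-diagonal mixed-norm spaces, so the decoupling cannot be read off from Plancherel; it must be extracted directly from the dispersive decay of $e^{-t\d_x^3}$, distinguishing the three ways two deformations can be orthogonal in the sense of \eqref{eq:def_orthty}: a divergent scale ratio $N_n^j/N_n^k$, equal scales with divergent relative time centers $s_n^j-(N_n^j/N_n^k)^3s_n^k$, or equal scales and time centers with divergent space centers. A density argument reducing each $\psi^j$ to a Schwartz function, combined with the change of variables induced by $\mathcal{G}_n^j(\mathcal{G}_n^k)^{-1}$, should make these separations quantitative and the error terms $o_n(1)$. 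The remaining ingredients --- the small-data theory (Lemma \ref{lem:lwp_pre}) and the refined linear estimate (Theorem \ref{thm:S}) --- are already at our disposal.
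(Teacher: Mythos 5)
Your route is genuinely different from the paper's, and its viability hinges entirely on the one step you yourself flag as ``the hard part'': the almost-orthogonality (decoupling) inequality for the \emph{nonlinear} profiles in the non-diagonal mixed-norm spaces $S(\rre_{+})$ and $L(\rre_{+})$. Everything else in your outline is sound: the profile-by-profile small-data bounds via Theorem \ref{thm:S} and Lemma \ref{lem:lwp_pre}, the summability $\sum_{j>J_0}\tnorm{|\d_x|^\sigma\psi^j}_{\hat M^\beta_{2,\delta}}^{q_0}<\I$ from \eqref{eq:lpd_Pyth} with $q_0\ge\delta$ (and indeed $\delta<\min(p,q)$ for both $S$ and $L$ under Assumption \ref{A:min}, so the exponent bookkeeping is consistent), the scale/translation invariance of the $S$- and $L$-norms, and the treatment of $e^{-t\d_x^3}r_n^J$. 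But the mixed-norm decoupling for pairwise orthogonal deformations \eqref{eq:def_orthty} is not available off the shelf in this framework and is not a routine adaptation of the diagonal $L^p_{t,x}$ argument: because $p\neq q$ one cannot simply expand a $q_0$-th power and kill cross terms by H\"older plus Plancherel, and the $L$-norm additionally carries the multiplier $|\d_x|^{1/\alpha}$. Statements of this type can be proved (cf.\ the decoupling lemmas in \cite{KKSV} for $L^5_xL^{10}_t$), but in your write-up it remains an asserted lemma, and it is exactly where the real work of Proposition \ref{prop:pf:ls2} would sit. As written, this is the gap.

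It is worth noting that the paper's proof is structured precisely to avoid such a lemma. Instead of estimating $\sum_{j}V_n^j$ by decoupling in spacetime norms, it groups the tail $W_n^k=\sum_{j=J_0+1}^{J_0+k}V_n^j$ into a \emph{single} approximate solution of \eqref{gKdV}: the linear part $e^{-t\d_x^3}W_n^k(0)$ is made small uniformly in $k$ by Lemma \ref{lem:pf:small1}, where the refined non-diagonal estimate of Theorem \ref{thm:S} is applied to the whole sum $\sum_j\mathcal G_n^j\psi^j$ and orthogonality is used only at the level of the $\hat M^\beta_{2,\delta}$-norm together with \eqref{eq:lpd_Pyth} (this is the paper's main improvement; see Remark \ref{rem:restriction}); the nonlinear cross terms $E_n^k$ are shown to vanish in the $N(\R_+)$-norm by the asymptotic orthogonality already established in \cite[Lemma 4.8]{MS2}; and a continuity argument with the inhomogeneous Strichartz estimate then yields $\tnorm{W_n^k}_{L(\R_+)\cap S(\R_+)}\le C\eps$, after which \eqref{eq:boundKZ} for the finitely many $j\le J_0$ and \eqref{eq:lpd_small} give \eqref{u0}. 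So the trade-off is: your approach buys a conceptually direct bound at the price of a new mixed-norm profile-decoupling lemma, while the paper reuses known orthogonality inputs and pays only with a perturbative bootstrap. To complete your proof you would need to supply the decoupling lemma in $S$ and $L$ (with some exponent $q_0\in[\delta,\min(p,q)]$), carrying out the three-case analysis of \eqref{eq:def_orthty} for mixed norms and for the derivative-weighted $L$-norm.
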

\begin{proposition}[Approximate solution to the equation]\label{prop:pf:ls3}
Let $\tilde{u}_n^J$ be defined by \eqref{eq:tunj}. Then 
$\tilde{u}_n^J$ is an approximate solution to \eqref{gKdV}
in such a sense that
\[
	\lim_{J\to\I} 
	\limsup_{n\to\I} \norm{|\d_x|^{-1}[(\d_t + \d_{xxx}) \tilde{u}_n^J -\mu \d_x (|\tilde{u}_n^J|^{2\alpha}\tilde{u}_n^J )]}_{N(\R_+)} = 0.
\]
\end{proposition}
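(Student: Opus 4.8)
The plan is to estimate the error term
\[
	e_n^J := (\d_t + \d_{xxx}) \tilde{u}_n^J - \mu \d_x\bigl(|\tilde{u}_n^J|^{2\alpha}\tilde{u}_n^J\bigr)
\]
by expanding $\tilde{u}_n^J$ according to its definition \eqref{eq:tunj}. Since each $V_n^j$ solves \eqref{gKdV} exactly and $e^{-t\d_x^3} r_n^J$ solves the linear Airy equation, the linear operator $\d_t + \d_{xxx}$ annihilates all the ``diagonal'' contributions. Thus
\[
	|\d_x|^{-1} e_n^J = |\d_x|^{-1}\mu\,\d_x\Biggl[ \sum_{j=1}^J |V_n^j|^{2\alpha}V_n^j - \Bigl| \sum_{j=1}^J V_n^j + e^{-t\d_x^3} r_n^J \Bigr|^{2\alpha}\Bigl( \sum_{j=1}^J V_n^j + e^{-t\d_x^3} r_n^J \Bigr) \Biggr],
\]
so the task reduces to estimating a difference of nonlinearities in $N(\R_+)$. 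First I would split this into (a) the purely nonlinear cross terms among the profiles $V_n^j$, i.e.\ $|\sum_j V_n^j|^{2\alpha}(\sum_j V_n^j) - \sum_j |V_n^j|^{2\alpha} V_n^j$, and (b) the terms involving at least one factor of $e^{-t\d_x^3} r_n^J$, which appears as the remainder in expanding $|\sum_j V_n^j + e^{-t\d_x^3}r_n^J|^{2\alpha}(\cdots)$ around $\sum_j V_n^j$.

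For part (a), I would first truncate the sum at a finite level $J$ and, using the pointwise inequality $\bigl| |a+b|^{2\alpha}(a+b) - |a|^{2\alpha}a - |b|^{2\alpha}b\bigr| \lesssim |a|^{2\alpha}|b| + |a||b|^{2\alpha}$ (iterated over the $J$ profiles), reduce matters to bounding mixed products $V_n^j (V_n^k)^{2\alpha}$ with $j\neq k$ in the $N(\R_+)$ norm, after applying the fractional Leibniz rule \cite[Lemma 3.4]{MS1} to distribute the derivative $|\d_x|^{1/(2\alpha)}$. The key point is that the orthogonality of the families $\{\mathcal{G}_n^j\}$ in the sense of Definition \ref{def:orthty} forces these cross terms to vanish as $n\to\I$: after undoing the symmetry $D(N_n^j)T(y_n^j)$ on one factor, the other factor, of the form $D(N_n^j/N_n^k)T(\cdots)A(\cdots)\Psi^k$, converges weakly to $0$ in the relevant space-time norm because the scaling ratios, time shifts, or space shifts diverge; combined with the uniform bounds \eqref{eq:boundKZ} and a density/approximation argument (approximating $\Psi^j$ by nice functions with compact space-time support), each mixed term tends to $0$. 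For part (b), I would expand and estimate each term by H\"older in space-time, putting the $e^{-t\d_x^3} r_n^J$ factor (or its derivative) in the norm controlled by \eqref{eq:lpd_small}, namely $L^{3\alpha}_{t,x}$ after $|\d_x|^{1/(3\alpha)}$ or $L^{5\alpha/2}_t L^{5\alpha}_x$, and the remaining $V_n^j$ factors in $S(\R_+)$ and $M(\R_+)$, using Proposition \ref{prop:pf:ls2}; this contribution goes to $0$ upon taking $\limsup_{n\to\I}$ and then $J\to\I$ because of \eqref{eq:lpd_small}.

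The tail of the profile sum requires care: I cannot bound infinitely many profiles term by term, so I would use the decoupling inequality \eqref{eq:lpd_Pyth} together with the linear estimate (Theorem \ref{thm:S}) to show $\sum_{j>J_0} \norm{V_n^j}_{S(\R_+)}^{?} + \norm{V_n^j}_{M(\R_+)}$-type quantities are summable and uniformly small for $J_0$ large, so that only finitely many profiles matter up to an error that vanishes as $J_0\to\I$; here one also needs that for $j$ large the profile $\psi^j$ has small norm, hence $\Psi^j$ is in the perturbative regime where $\norm{\Psi^j}_{S} \lesssim \norm{|\d_x|^\sigma \psi^j}_{\hat{M}^\beta_{2,\delta}}$ by small-data theory (Theorem \ref{thm:SDS}). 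The main obstacle I anticipate is precisely this interplay: making the cross-term-vanishing argument in part (a) uniform while simultaneously controlling the profile tail, since the two limits ($n\to\I$ and $J\to\I$) must be taken in the right order and the orthogonality gives decay only for fixed $j\neq k$; the standard remedy, which I would follow, is to first fix $J$, handle the finitely many cross terms via orthogonality, and only afterwards send $J\to\I$ using \eqref{eq:lpd_Pyth} and \eqref{eq:lpd_small}.
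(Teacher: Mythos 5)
Your proposal is correct and follows essentially the same route as the paper, whose own proof is a one-line reduction: Proposition \ref{prop:pf:ls3} is deduced from the uniform bounds of Proposition \ref{prop:pf:ls2} together with the mutual asymptotic orthogonality of the nonlinear profiles as in \cite[Lemma 4.8]{MS2}. Your sketch simply spells out those same ingredients --- exactness of each $V_n^j$ as a solution, vanishing of the cross terms via the orthogonality of the $\{\mathcal{G}_n^j\}_n$, smallness of the terms containing $e^{-t\d_x^3}r_n^J$ via \eqref{eq:lpd_small}, tail control via the decoupling inequality, and the correct order of the limits in $n$ and $J$.
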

The proof of Proposition \ref{prop:pf:ls1} is obvious by definition of $V_n^j$.
% The Third follows from the second by essentially the same argument as in \cite{MS2}.
We shall prove Proposition \ref{prop:pf:ls2} later
since our improvement is in this proposition.
Proposition \ref{prop:pf:ls3} follows from Proposition \ref{prop:pf:ls2} and
the mutual asymptotic orthogonality of nonlinear profiles as in \cite[Lemma 4.8]{MS2}.

By means of a stability estimate, 
the above three propositions imply that $\norm{u_n}_{S(\R_+)} < \I$ for sufficiently large $n$.
This contradicts with the definition of $\{u_n\}_n$.

Thus, we see that there exists $j_0$ such that $\Psi^{j_0}$ does not scatter.
Then, $\tnorm{|\d_x|^\sigma \psi^{j_0}}_{\hat{M}^\beta_{2,\delta}} \ge E_1$ by definition of $E_1$.
One also sees from \eqref{eq:lpd_Pyth} that $\tnorm{|\d_x|^\sigma \psi^{j_0}}_{\hat{M}^\beta_{2,\delta}} \le E_1$.
Hence, $\tnorm{|\d_x|^\sigma \psi^{j_0}}_{\hat{M}^\beta_{2,\delta}} = E_1$.

Let us show that $u_{c}:=\Psi^{j_0}$ attains $E_1$.
The case $s_n^{j_0} \to \I$ as $n\to\I$ is excluded since this implies $u_c(t)$ scatters forward in time.
If $s_n^{j_0} \equiv 0$ then $\Psi^{j_0}(0) = \psi^{j_0}$ and so
 $\norm{|\d_x|^\sigma u_c(0)}_{\hat{M}^\beta_{2,\delta}}  = E_1 $.
Finally, if $s_n^{j_0} \to -\I$ as $n\to\I$ then $\lim_{t\to-\I} e^{t\d_x^3} \Psi^{j_0}(t) = \psi^{j_0}$.
Hence, $u_{c,-}:=\lim_{t\to-\I} e^{t\d_x^3}\Psi^{j_0}(t)$ satisfies 
$\norm{|\d_x|^\sigma u_{c,-}}_{\hat{M}^\beta_{2,\delta}}  = E_1 $. 
\end{proof}

To complete the proof of Theorem \ref{thm:minimal},
we prove Proposition \ref{prop:pf:ls2}.
Recall that we have uniform bound \eqref{eq:boundKZ} for each $V_n^j$.
\begin{lemma}\label{lem:pf:small1}
For any $\eps>0$,
there exists $J_0=J_0(\eps)$ such that
\[
	 \norm{ \sum_{j = J_0+1}^{J_0 + k}e^{-t\d_x^3}V_n^j(0) }_{L(\rre_{+})} +
	\norm{ \sum_{j = J_0+1}^{J_0 + k} e^{-t\d_x^3} V_n^j(0) }_{S(\rre_{+})}
 \le \eps
\]
for any $k\ge 1$ and $n\ge N(k)$.
\end{lemma}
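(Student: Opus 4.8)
The plan is to dominate the tail sum by the linear evolutions of the profile remainders $r_n^J$, whose smallness is already recorded in \eqref{eq:lpd_small}; this follows the scheme of \cite{MS2}, the genuinely new input of this section being isolated in Proposition \ref{prop:pf:ls2}. Throughout, "$L(\R)\cap S(\R)$-norm" abbreviates $\norm{\cdot}_{L(\R)}+\norm{\cdot}_{S(\R)}$.

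Two preliminary observations. First, since $\mathcal{G}_n^j\psi^j=D(N_n^j)T(y_n^j)e^{-s_n^j\d_x^3}\psi^j$ and the $|\d_x|^{-\sigma}\hat{M}^{\beta}_{2,\delta}$-norm is invariant under $D(N)$, $A(s)$ and $T(y)$, the definition of the nonlinear profile $\Psi^j$ gives, for each fixed $j$,
\[
	\norm{|\d_x|^\sigma\bigl(V_n^j(0)-\mathcal{G}_n^j\psi^j\bigr)}_{\hat{M}^{\beta}_{2,\delta}}
	=\norm{|\d_x|^\sigma\bigl(\Psi^j(s_n^j)-e^{-s_n^j\d_x^3}\psi^j\bigr)}_{\hat{M}^{\beta}_{2,\delta}}\longrightarrow0
\]
as $n\to\I$ (the left-hand side vanishes identically when $s_n^j\equiv0$, and the convergence is exactly the forward, resp. backward, scattering of $\Psi^j$ when $s_n^j\to+\I$, resp. $s_n^j\to-\I$). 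Second, subtracting \eqref{eq:lpd_decomp} written at levels $J_0$ and $J_0+k$ yields $\sum_{j=J_0+1}^{J_0+k}\mathcal{G}_n^j\psi^j=r_n^{J_0}-r_n^{J_0+k}$.

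Now fix $\varepsilon>0$. Interpolating the admissible bounds of Theorem \ref{thm:S} against \eqref{eq:linear1} (as in the proof of Theorem \ref{thm:cc}), the two norms appearing in \eqref{eq:lpd_small} control $\norm{e^{-t\d_x^3}r_n^J}_{L(\R)\cap S(\R)}$ up to a power of the uniformly bounded quantity $\norm{|\d_x|^\sigma r_n^J}_{\hat{M}^{\beta}_{2,\delta}}$; hence $\lim_{J\to\I}\varlimsup_{n\to\I}\norm{e^{-t\d_x^3}r_n^J}_{L(\R)\cap S(\R)}=0$, and we may pick $J_0$ with $\varlimsup_{n\to\I}\norm{e^{-t\d_x^3}r_n^J}_{L(\R)\cap S(\R)}<\varepsilon/4$ for every $J\ge J_0$, in particular for $J=J_0$ and $J=J_0+k$. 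For an arbitrary but fixed $k\ge1$ I would then write
\[
	\sum_{j=J_0+1}^{J_0+k}e^{-t\d_x^3}V_n^j(0)
	=e^{-t\d_x^3}\bigl(r_n^{J_0}-r_n^{J_0+k}\bigr)+\sum_{j=J_0+1}^{J_0+k}e^{-t\d_x^3}\bigl(V_n^j(0)-\mathcal{G}_n^j\psi^j\bigr).
\]
Because $\R_+\subset\R$, the first term has $L(\R_+)\cap S(\R_+)$-norm at most $\norm{e^{-t\d_x^3}r_n^{J_0}}_{L(\R)\cap S(\R)}+\norm{e^{-t\d_x^3}r_n^{J_0+k}}_{L(\R)\cap S(\R)}$, whose $\varlimsup$ in $n$ is $<\varepsilon/2$; the second term is a finite sum of $k$ pieces, so \eqref{eq:linear1}, the triangle inequality and the first preliminary observation bound its norm by $C\sum_{j=J_0+1}^{J_0+k}\norm{|\d_x|^\sigma(V_n^j(0)-\mathcal{G}_n^j\psi^j)}_{\hat{M}^{\beta}_{2,\delta}}$, which tends to $0$ as $n\to\I$ since $k$ is fixed. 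Taking $\varlimsup_{n\to\I}$ gives a bound by $\varepsilon/2<\varepsilon$, and the assertion follows for $n\ge N(k)$.

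The step carrying the real content is the equivalence, modulo a bounded $\hat{M}^{\beta}_{2,\delta}$-factor, between the smallness \eqref{eq:lpd_small} and smallness in $L(\R)\cap S(\R)$: it needs the full admissible range of Theorem \ref{thm:S} together with the exclusion of the endpoint cases in Assumption \ref{A:min} (so that interpolation with the $\hat{M}$-bound is available), and is the remainder-analogue of the computation performed right after \eqref{eq:cc_additional}. The rest is bookkeeping — the finiteness of the sum over $J_0<j\le J_0+k$ is precisely what permits $N(k)$ to depend on $k$, and the symmetry invariance of the $|\d_x|^{-\sigma}\hat{M}^{\beta}_{2,\delta}$-norm is what links $V_n^j(0)$ back to $\psi^j$.
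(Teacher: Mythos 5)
Your argument is correct, but it is not the route the paper takes. The paper's proof applies the refined non-diagonal estimate (Theorem \ref{thm:S}, i.e.\ \eqref{eq:linear1}) directly to the block sum, reducing to $\bigl\lVert \sum_{j=J_0+1}^{J_0+k}|\d_x|^\sigma\mathcal{G}^j_n\psi^j\bigr\rVert_{\hat M^\beta_{2,\delta}}$, and then invokes the pairwise orthogonality of the deformations to get an almost-$\ell^\delta$ decoupling of this norm, so that the conclusion follows from the convergence of $\sum_j\norm{|\d_x|^\sigma\psi^j}_{\hat M^\beta_{2,\delta}}^\delta$ guaranteed by \eqref{eq:lpd_Pyth}; this is precisely where the new estimate enters (cf.\ \eqref{eq:min_improvepoint} and Remark \ref{rem:restriction}). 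You instead telescope the block sum via \eqref{eq:lpd_decomp} into $e^{-t\d_x^3}(r_n^{J_0}-r_n^{J_0+k})$ plus the finite collection of errors $e^{-t\d_x^3}(V_n^j(0)-\mathcal{G}_n^j\psi^j)$, and make the remainder part small by upgrading \eqref{eq:lpd_small} to smallness in $L(\R)\cap S(\R)$ through interpolation with the uniform $\hat M^\beta_{2,\delta}$-bound on $r_n^J$ coming from \eqref{eq:lpd_Pyth}. Both routes are legitimate. Yours avoids the almost-orthogonality/decoupling estimate for sums of deformed profiles, but it leans on the interpolation step, which requires the strict (non-endpoint) inequalities of Assumption \ref{A:min} and, strictly speaking, a frequency-dyadic or complex-interpolation justification because the derivative weights differ between the diagonal and the $L$, $S$ exponents — the same level of detail the paper glosses over with ``by interpolation inequality'' in the proof of Theorem \ref{thm:cc}, so this is an acceptable debt rather than a gap. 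A further small merit of your write-up is that you treat explicitly the discrepancy between $V_n^j(0)$ and $\mathcal{G}_n^j\psi^j$ (vanishing in $|\d_x|^{-\sigma}\hat M^\beta_{2,\delta}$ by the definition of the nonlinear profiles and the invariance of the norm under $D$, $A$, $T$), which the paper compresses into the sentence ``it suffices to prove the estimate for $e^{-t\d_x^3}\mathcal{G}^j_n\psi^j$''; conversely, the paper's route keeps the dependence on the decoupling inequality \eqref{eq:lpd_Pyth} and the orthogonality of $\{\mathcal{G}_n^j\}$ in plain view, which is what isolates the improvement over \cite{MS2}.
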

\begin{proof}
By definition of $V_n^j(t)$, it suffices to prove the estimate for $e^{-t \d_x^3} \mathcal{G}^j_{n} \psi^j$ instead of $e^{-t\d_x^3} V_n^j(0)$.
By Theorem \ref{thm:S},
\EQ{\label{eq:min_improvepoint}
	\norm{ \sum_{j = J_0+1}^{J_0 + k} e^{-t\d_x^3}
	\mathcal{G}^j_{n} \psi^j}_{L(\rre_{+}) \cap S(\rre_{+})}
	\le C\hMn{ \sum_{j = J_0+1}^{J_0 + k} |\d_x|^\sigma \mathcal{G}^j_{n} \psi^j }{\beta}{2}{\delta}.
}
By pairwise orthogonality of $\{\mathcal{G}_n\}_n$, we see that
\[
\hMn{ \sum_{j = J_0+1}^{J_0 + k} |\d_x|^\sigma \mathcal{G}^j_{n} \psi^j }{\beta}{2}{\delta}
	\le \( \sum_{j = J_0+1}^{J_0 + k} \hMn{ |\d_x|^\sigma \psi^j }{\beta}{2}{\delta}^\delta \)^{1/\delta} + o(1).
\]
By the decoupling inequality \eqref{eq:lpd_Pyth} and the above estimates, we obtain the desired estimate.
\end{proof}
\begin{remark}\label{rem:restriction}
The equation \eqref{eq:min_improvepoint} is the main improvement due to our main theorem.
In the previous result \cite{MS2}, the improved Strichartz estimate is valid only for the diagonal case.
Hence, we use a substitute by interpolating diagonal improved estimate and non-diagonal estimate in $L^\alpha$ space.
The interpolation spoils summability in $j$,
which causes a restriction on possible range of $\alpha$.
\end{remark}

\begin{proof}[Proof of Proposition \ref{prop:pf:ls2}] 
Let $W_n^k:=\sum_{j=J_0+1}^{J_0+k} V_n^j$, 
where $J_{0}$ is fixed later. Then $W_n^k$ satisfies the integral equation
\[
	W_n^k = e^{-t\d_x^3} W_n^k(0)+\mu\int_0^t  e^{-(t-s)\d_x^3} \d_x (|W_n^k|^{2\alpha} W_n^k +  E_n^k) ds
	,
\]
where
$- E_n^k = |W_n^k|^{2\alpha} W_n^k - \sum_{j=J_0+1}^{J_0+k} |V_n^j|^{2\alpha} V_n^j $.
Applying the inhomogeneous Strichartz' estimate \cite[Proposition 2.5]{MS1}, we obtain
\begin{eqnarray}
\label{u1}\\
	\tnorm{W_n^k}_{L(\R_+) \cap S(\R_+)}
	&\le& \tnorm{e^{-t\d_x^3} W_n^k(0)}_{L(\R_+) \cap S(\R_+)}
	+ C \tnorm{W_n^k}_{L(\R_+) \cap S(\R_+)}^{2\alpha+1} \nonumber\\
	& &+ C \tnorm{ E_n}_{N(\R_+)}.\nonumber
\end{eqnarray}
Fix $\eps>0$.
By Lemma \ref{lem:pf:small1}, one can choose $J_0$ so that
\begin{eqnarray}
	\tnorm{e^{-t\d_x^3} W_n^k(0)}_{L(\R_+) \cap S(\R_+)}
	\le \eps\label{u2}
\end{eqnarray}
for any $k\ge 1$ and $n\ge N(k)$. For the above $J_0$, we claim that
\begin{eqnarray}
\norm{ E_n}_{N(\R_+)} \le \eps\label{u3}
\end{eqnarray}
for any $k \ge 1$ and $n\ge N(k)$. 
By the interpolation inequality and the Young inequality,
\begin{align*}
	\tnorm{ E_n}_{N(\R_+)} \le
	{}&\tnorm{|\d_x|^{1/\alpha}E_n}_{L^{p_1}_xL^{q_1}_t(\R\times \R_+)}^{\frac12} 
	\tnorm{ E_n}_{L^{p_2}_xL^{q_2}_t(\R\times \R_+)}^{\frac12},
\end{align*}
where $1/p_1=1/p(L) + 2\alpha/p(S)$, $1/q_1=1/q(L) + 2\alpha/q(S)$, $p_2=p(S)/(2\alpha+1)$, and $q_2=q(S)/(2\alpha+1)$.
We have
\[
	\tnorm{|\d_x|^{1/\alpha} E_n}_{L^{p_{1}}_x L^{q_{1}}_t (\R \times \R_+)}
	\le C \sum_{j=J_0+1}^{J_0+k} \norm{V_n^j}_{L(\R_+)\cap S(\R_+)}^{2\alpha+1}
	\le Ck M^{2\alpha+1}
\]
in light of \eqref{eq:boundKZ}. On the other hand, for any $k$, we have
$\tnorm{ E_n}_{L^{p_2}_xL^{q_2}_t(\R\times \R_+)} \to 0$ as $n\to\I$ (see \cite[Lemma 4.8]{MS2}).
Thus, we obtain (\ref{u3}) for any $k\ge 1$ and $n\ge N(k)$. 
Combining (\ref{u1}), (\ref{u2}), (\ref{u3}) and the 
continuity argument, we have that if $\varepsilon$ is sufficiently small, then 
$\tnorm{W_n^k}_{L(\R_+) \cap S(\R_+)}\le C\varepsilon$ 
for any $k\ge 1$ and $n\ge N(k)$. Combining this with (\ref{eq:lpd_small}), 
we obtain the uniform estimate (\ref{u0}).
\end{proof}%$\qed$

\subsection{Proof of Theorem \ref{thm:minimal2}}
We finally consider analysis of $E_2$.
\begin{proof}
By definition of $E_2$, it is possible to choose a minimizing sequence of solutions $\{u_n(t)\}_n$ so that
all $u_n(t)$ does not scatter forward in time and
\[
	E_2 \le \varlimsup_{t\uparrow T_{\max}(u_n)} \norm{|\d_x|^\sigma u_n(t)}_{\hat{M}^\beta_{2,\delta}} \le E_2 + \frac1n.
\] 
Hence, there exists $t_n,t_n' \in I_{\max}(u_n)$, $t_n< t_n'$, so that
\[
	\norm{u_n}_{S([t_n,t_n'])}\ge n,\quad \sup_{t\in [ t_n,T_{\max})} \norm{|\d_x|^\sigma u_n(t)}_{\hat{M}^\beta_{2,\delta}} \in \left[E_2,E_2+\frac2n\right].
\]
Indeed, we first choose $t_n$ so that the second property holds.
Then, since $\norm{u_n}_{S([t_n,T_{\max}))}=\I$,
we can choose $t_n'$ so that the first property is true.

By time translation symmetry, we may suppose that $t_n'\equiv0$.
We now apply linear profile decomposition to $u_n(0)$ to get the decomposition
\[
	u_n(0) = \sum_{j=1}^J \mathcal{G}^j_{n} \psi^j + r_n^J
\]
for $n,J\ge1$ with the properties \eqref{eq:lpd_small}, \eqref{eq:lpd_Pyth}, and pairwise orthogonality of $\{\mathcal{G}_n^j\}_n \subset G$.
By extracting subsequence and changing notations if necessary, we may assume that
for each $j$ and $\{x_n^j\}_{n,j} = \{\log N_n^j\}_{n,j}, \{s_n^j\}_{n,j}$, $\{y_n^j\}_{n,j}$,
we have either $x_n^j \equiv 0$, $x_n^j \to \I$ as $n\to\I$, or 
$x_n^j \to -\I$ as $n\to\I$.
Let us define nonlinear profile $\Psi^j$ associated with $(\psi^j,s_n^j)$ in the same way as in the proof of Theorem \ref{thm:minimal}.
We also define $V_n^j$ and $\tilde{u}_n^J$ by \eqref{eq:Vnj} and \eqref{eq:tunj}, respectively.

Then, mimicking the proof of Theorem \ref{thm:minimal},
one sees that at least one $\Psi^j$ does not scatter forward in time.
We further see from decoupling inequality \eqref{eq:lpd_Pyth} and small data scattering that
the number of the profiles that do not scatter is finite.
Renumbering, we may suppose that $\Psi^j(t)$ do not scatter forward in time if and only if $j\in [1,J_1]$.
Here, $1\le J_1 < \I$.
Arguing as in \cite{M3}, we see that $J_1=1$, $\varlimsup_{t\uparrow T_{\max}(\Psi^1)}\tnorm{|\d_x|^\sigma \Psi^1(t)}_{\hat{M}^\beta_{2,\delta}}=E_2$, $\psi^j \equiv 0$ for $j\ge2$, and $r_n^1\to0$ as $n\to\I$ in $|\d_x|^{-\sigma}
\hat{M}^\beta_{2,\delta}$.
As a result,
\EQ{\label{eq:min2_pf1}
	u_n(0) = \mathcal{G}_n^1 \psi^1 + o_n(1)\IN |\d_x|^{-\sigma} \hat{M}^\beta_{2,\delta}.
}
If $s_n^1\to\I$ as $n\to\I$ then $\Psi^1(t)$ scatters forward in time, a contradiction.
Because of $\norm{u_n}_{S([t_n,0])}\ge n$, the same argument works for negative time direction.
We see that $\Psi^1(t)$ does not scatter backward in time and that the case $s_n^1\to-\I$ as $n\to\I$ is excluded.
Moreover, together with $\sup_{t\in [ t_n,T_{\max})} \norm{|\d_x|^\sigma u_n(t)}_{\hat{M}^\beta_{2,\delta}} \in \left[E_2,E_2+\frac2n\right]$, we have
\[
	\varlimsup_{t\downarrow T_{\min}(\Psi^1)}\norm{|\d_x|^\sigma \Psi^1(t)}_{\hat{M}^\beta_{2,\delta}}=
	\sup_{t\in I_{\max}(\Psi^1)} \norm{|\d_x|^\sigma \Psi^1(t)}_{\hat{M}^\beta_{2,\delta}} = E_2.
\]

So far, we have proven that $\Psi^1$ satisfies the first two properties of Theorem \ref{thm:minimal2}.
Let us finally prove the precompactness modulo symmetry.
Take an arbitrary sequence $\{\tau_n\} \subset I_{\max}(\Psi^1)$.
Then, we can choose $t_n \in (T_{\min}(\Psi^1),\tau_n)$ so that $u_n(t):=\Psi$, $t_n'=\tau_n$, and this $t_n$
satisfies the same assumption as above.
The decomposition \eqref{eq:min2_pf1} reads as existence of $\psi \in |\d_x|^{-\sigma}\hat{M}^{\beta}_{2,\delta}$,
$\{N_n\}_n \subset \R_+$, and $\{y_n\}_n \subset \R$ such that
\[
	\Psi^1(\tau_n) = D(N_n) T(y_n) \phi + o_n(1)\IN |\d_x|^{-\sigma} \hat{M}^\beta_{2,\delta}.
\]
This is nothing but a sequential version of precompactness.
A standard argument then upgrades this property to the continuous one.
\end{proof}

%%%%%%%%%%%%%%%%%%%%%%%%%%%%%%%%%%%%%%%%%%%%%%%%%%%%%%
%
%   Appendix
%
%%%%%%%%%%%%%%%%%%%%%%%%%%%%%%%%%%%%%%%%%%%%%%%%%%%%%%

\appendix

\section{Embedding in the generalized Morrey space}

In this appendix we mention the embedding properties 
of the generalized Morrey space. 
We first note that $M^{\beta}_{\gamma,\I}$ is a usual Morrey space. 
We easily see that $M^{\beta}_{\beta,\I}=L^{\beta}$ with equal norm. 

We collect the inclusion relations for 
the generalized Morrey space.

\vskip1mm
\noindent
(i) For any $1 \le \gamma_2 \le \gamma_1 \le \beta \le \I$ 
and $1\le \delta_1 \le \delta_2 \le \I$,
it holds that $M^{\beta}_{\gamma_1,\delta_1} \hookrightarrow 
M^{\beta}_{\gamma_2,\delta_2}$.

\vskip1mm
\noindent
(ii) For any $1 \le \beta \le \gamma_1 \le \gamma_2 
\le \I$ and $1\le \delta_1 \le \delta_2 \le \I$,
it holds that $\hat{M}^{\beta}_{\gamma_1,\delta_1} \hookrightarrow \hat{M}^{\beta}_{\gamma_2,\delta_2}$

\vskip1mm
\noindent
(iii) $L^{\beta} \hookrightarrow M^{\beta}_{\gamma,\delta}$ 
holds as long as 
$1\le \gamma<\beta<\delta\le\I$.

\vskip1mm
\noindent
(iv) $\hat{L}^{\beta} \hookrightarrow 
\hat{M}^{\beta}_{\gamma,\delta}$ holds as long as 
$ 1\le  \gamma'<\beta'<\delta \le\I$.

\vskip1mm
\noindent
(v) $|\d_x|^{-\si} \hat{M}^{\beta}_{\gamma_1,\delta_1} \hookrightarrow  \hat{M}^{\alpha}_{\gamma_2,\delta_2}$ 
if $1/\gamma_1-1/\gamma_2>1/\beta-1/\alpha= \si$ and $\delta_1 \le \delta_2$.

\vskip1mm 
The properties (i) and (ii) are trivial from the definition of 
the generalized Morrey space. 
For the proof of (iii) and (iv), see \cite[Proposition A.1]{MS2}. 
We now give the proof of (v). The H\"older inequality yields 
\[
	\norm{\hat{f} }_{L^{\gamma'_2}(\tau^{j}_k)}
	\le \norm{|\xi|^{-\si}}_{L^{\frac{\gamma_1\gamma_2}{\gamma_2-\gamma_1}}(\tau^j_k)} 
	\norm{|\xi|^{\si} \hat{f} }_{L^{\gamma'_1}(\tau^{j}_k)}
	= C |\tau_k^j|^{\frac1{\gamma_1} -\frac1{\gamma_2}-\si}  \norm{|\xi|^{\si}  \hat{f} }_{L^{\gamma'_1}(\tau^{j}_k)}
\]
if $1/\gamma_1-1/\gamma_2-\si>0$.
Hence, it follows that
\begin{align*}
	\norm{f}_{\hat{M}^{\alpha}_{\gamma_2,\delta_2}}
	&{}= \norm{|\tau_k^j|^{\frac1{\gamma_2}-\frac1{\alpha}} 
	\norm{\hat{f} }_{L^{\gamma'_2}(\tau^{j}_k)}}_{\ell^{\delta_2}_{k,j}}\\
	&{}\le C \norm{|\tau_k^j|^{\frac1{\gamma_1}-\si-\frac1{\alpha}} \norm{|\xi|^{\si}  \hat{f} }_{L^{\gamma'_1}(\tau^{j}_k)}}_{\ell^{\delta_1}_{k,j}} \le C \norm{ |\d_x|^{\si} f }_{\hat{M}^{\beta}_{\gamma_1,\delta_1}}
\end{align*}
as long as $1/\gamma_1-1/\gamma_2>\si=1/\beta-1/\alpha$ and $\delta_1 \le \delta_2$. 
Hence we have (v).

\vskip3mm
\noindent {\bf Acknowledgments.} 
S.M. is partially supported by the Sumitomo Foundation, Basic Science Research
Projects No.\ 161145. 
J.S. is partially supported by JSPS,
Grant-in-Aid for Young Scientists (A) 25707004.

\end{document}